\newcommand\R{{\mathbb R}}
\newcommand{\C}{\mathbb C}
\def\BBB{{\mathscr{B}}}
\def\MMM{{\mathscr{M}}}
\renewcommand\AA{\mathcal A}
\newcommand\BB{\mathcal B}
\newcommand\DD{\mathcal D}
\newcommand\FF{\mathcal F}
\newcommand\KK{\mathcal K}
\newcommand\LL{\mathcal L}
\newcommand\OO{\mathcal O}
\newcommand\RR{\mathcal R}
\newcommand\UU{\mathcal U}
\newcommand\XX{\mathcal X}
\newcommand\YY{\mathcal Y}
\newcommand\ZZ{\mathcal Z}
\newcommand\eps{\varepsilon}
\newcommand{\beqn}{\begin{equation}}
\newcommand{\eeqn}{\end{equation}}
\newcommand{\bear}{\begin{eqnarray}}
\newcommand{\eear}{\end{eqnarray}}
\newcommand{\bean}{\begin{eqnarray*}}
\newcommand{\eean}{\end{eqnarray*}}
\newcommand{\bal}{\begin{aligned}}
\newcommand{\eal}{\end{aligned}}
\newcommand{\la}{\langle}
\newcommand{\ra}{\rangle}
\newcommand{\rra}{\rangle\!\rangle}
\newcommand{\lla}{\langle\!\langle}
\renewcommand{\d}{\mathrm{d}}
\DeclareMathOperator{\Div}{div}
\numberwithin{equation}{section}
\setlist[enumerate]{wide,labelindent=0cm,label=\textnormal{(\arabic*)},itemsep=5pt,topsep=4pt}
\theoremstyle{plain}
\newtheorem{theo}{Theorem}[section]
\newtheorem{prop}[theo]{Proposition}
\newtheorem{lem}[theo]{Lemma}
\newtheorem{cor}[theo]{Corollary}
\theoremstyle{remark}
\newtheorem{rem}[theo]{Remark}
\theoremstyle{definition}
\title[Stability of the parabolic-parabolic Keller-Segel equation]{On the self-similar stability of the parabolic-parabolic Keller-Segel equation}
\author[F. Alvarez]{Frank Alvarez Borges}
\address[Frank Alvarez Borges]{Laboratoire Jacques-Louis Lions (LJLL),
  Sorbonne Universit\'e UPMC, 4 Place Jussieu, 75005 Paris 5, France}
\email{frank.alvarez$\_$borges@sorbonne-universite.fr}
\author[K. Carrapatoso]{Kleber Carrapatoso}
\address[Kleber Carrapatoso]{Centre de Math\'ematiques Laurent Schwartz, \'Ecole Polytechnique, Institut Polytechnique de Paris, 91128 Palaiseau cedex, France}
\email{kleber.carrapatoso@polytechnique.edu}
\author[S. Mischler]{Stéphane Mischler}
\address[Stéphane Mischler]{Centre de Recherche en Math\'ematiques de
  la D\'ecision (CEREMADE),
  Universit\'es  Paris Dauphine-PSL, Place de Lattre de
  Tassigny, 75775 Paris 16, France \& Institut Universitaire de France (IUF)}
\email{mischler@ceremade.dauphine.fr}
\date{\today}
\keywords{Parabolic-parabolic Keller-Segel equation, self-similar variables, large-time behaviour, nonlinear stability}
\subjclass[2020]{35B35, 35B25, 35K45, 35K55, 35Q92}
\begin{document}

\begin{abstract}
We consider the parabolic-parabolic Keller-Segel equation in the plane and prove the nonlinear exponential stability of the self-similar profile in a quasi parabolic-elliptic regime. We first perform a perturbation argument in order to obtain exponential stability for the semigroup associated to part of the first component of the linearized operator, by exploiting the exponential stability of the linearized operator for the parabolic-elliptic Keller-Segel equation.
We finally employ a purely semigroup analysis to prove linear, and then nonlinear, exponential stability of the system in appropriated functional spaces. 

\end{abstract}

\maketitle

\tableofcontents

\section{Introduction}

In this paper we are concerned with the parabolic-parabolic Keller-Segel system in self-similar variables  in the plane 
\begin{equation}\label{eq:KS_ssChap7}
\left\{
\begin{aligned}
\partial_t f  & = \Delta f + \Div ( \mu x f - f \nabla u) \\
\partial_t u &= \frac{1}{\eps} (\Delta u + f) +   \mu x \cdot \nabla u,
\end{aligned}
\right.
\end{equation}
with fixed drift parameter $\mu > 0$ and with  small time scale parameter $\eps >0$,  
which aims to give the time evolution of the collective motion of cells (described by the {\it cells density} $f=f(t,x)$) that are attracted by a chemical substance (described by 
the {\it chemo-attractant concentration}  $u = u(t,x)$)
they are able to emit (\cite{P,KS}).  Here $t \ge 0$ is  the time variable and $x \in \R^2$ stands for the space variable. 
 We refer to the work \cite{calvez2008parabolic} as well as to the reviews \cite{MR2448428,MR2430317} and the references quoted therein for biological motivation and mathematical introduction.

\smallskip
In this work, we establish  in a convenient weighted Sobolev space the exponential stability of the {\it normalized self-similar profile} in the quasi parabolic-elliptic regime, that is for small values of the time scale $\eps > 0$, without assuming any radial symmetry property on the initial datum. This extends similar results obtained 
in \cite{MR3615547} 
in a radially symmetric framework.
As in that last reference, the proof of the stability is based on a perturbation argument which takes advantage of the  exponential stability of the self-similar profile for the parabolic-elliptic Keller-Segel equation established in \cite{MR3196188,MR3466844}. 
The proof however differs from \cite{MR3615547} because  it uses among other things (1) a different, and somehow more standard, perturbation argument performed at the level of the main part of the first component of the linearized operator instead of at the level of the whole linearized system  
and (2) a purely semigroup analysis of the linear and nonlinear stability of the system. 
 
Our result implies that in a quasi-parabolic-elliptic regime and  for some class of initial data without assuming any radial symmetry property, 
the associated solution  to the parabolic-parabolic KS system in standard variables (corresponding to $\mu=0$) has a self-similar long-time behavior, which in particular means that no concentration occurs in large time  and thus the diffusion mechanism is really the dominant phenomenon all along the time evolution. 

It is worth mentioning that as far as the existence problem is concerned, an alternative possible approach has been developed in \cite{calvez2008parabolic} where weak solutions have been proved to exist for a very general class of  initial data, see also \cite{MR2847229,MR3116019}. The associated uniqueness result has been solved in \cite{MR3615547}, see also \cite{MR1657160,MR3117843}, but the accurate analysis of the long-time behavior of these solutions is still lacking. On the other hand, mild solutions have been proved to exist under a smallness condition in the initial datum for instance in \cite{MR2515582,MR2785976,MR3227285} with associated self-similar behavior result  in the longtime asymptotic in  \cite{MR1929883,MR2295185,MR3227285}  or under a large time scale parameter  for instance in \cite{MR3227285,MR3411100}.

The two, and only two, general properties satisfied (at least formally) by the solutions of the parabolic-parabolic Keller-Segel equation are the positivity preservation of the cells density, i.e. 
$$
f(t,\cdot) \ge 0 \quad\hbox{if} \quad 
f(0,\cdot) \ge 0,
$$
and a similar positivity property for the chemo-attractant concentration $u$, as well as the mass conservation of the cells density, namely
\beqn\label{eq:massconservation}
\lla f(t,\cdot) \rra  = 
\lla f(0,\cdot) \rra , \quad \forall \, t \ge 0, \quad 
\lla h\rra  := \int_{\R^2} h dx. 
\eeqn
That mass conservation \eqref{eq:massconservation} is known to be violated in some {\it supercritical mass} situation. However, we will only be concerned in this paper with a  {\it subcritical mass framework} that we describe now. 
  
\smallskip 
We denote by $Q=Q_{\eps}^\mu$ and $P=P_\eps^\mu$   
the normalized stationary solutions to the Keller-Segel system \eqref{eq:KS_ssChap7}, that is 
\begin{equation}\label{eq:KS_ssQPChap7}
\left\{
\begin{aligned}
0  & = \Delta Q + \Div (\mu x Q -Q \nabla P), \qquad Q(0) = 8,  \\
0 &=  \Delta P + Q  + \eps\mu x \cdot \nabla P,
\end{aligned}
\right.
\end{equation}
which existence, uniqueness, radially symmetric property and smoothness have been established in \cite{MR1929883,MR2806487,MR3227285}. 
It is worth emphasizing that we adopt here the normalizing convention of \cite{MR3861072} motivated by the fact that in the vanishing drift limit 
$$
Q_{\eps}^\mu \to Q^0, \quad \nabla P_{\eps}^\mu \to \nabla P^0, \quad\hbox{as}\quad \mu \to 0, 
$$
where $(Q^0,P^0)$ is defined by  
$$
Q^0 (x) := \frac{8}{(1+|x|^2)^2}, \quad \Delta P^0 = Q^0 , 
$$
and thus 
$Q^0$ is the well-known $8\pi$ critical mass solution to the parabolic-elliptic Keller-Segel equation in standard variables (corresponding thus to $\mu=\eps=0$). 
Because for any $\eps  \ge  0$, there exists a one-to-one mapping 
$$
\MMM_\eps: {  [}0,\infty) \to (0,8\pi{  ]}, \quad \mu \mapsto \MMM_\eps(\mu) : = \lla Q^\mu_\eps \rra, 
$$
another possible (and more standard) normalization convention should be to fix the drift term $\mu := 1$ and to normalize the stationary solution by its subcritical mass in
the interval $(0,8\pi)$.

\medskip
We next introduce the   perturbation $(g,v)$ of the stationary state $(Q,P)$ defined by
$$
f= Q + g, \quad \lla g \rra = 0, \quad u = P + v,
$$
in such a way that the mass compatibility condition $\lla f \rra = \lla Q \rra$ is satisfied.
If $(f,u)$ is a solution to  \eqref{eq:KS_ssChap7} then $(g,v)$  satisfies the system
\begin{equation}\label{eq:KS_perturbationChap7}
\left\{
\begin{aligned}
\partial_t g  & = \Delta g + \Div (\mu x g - g \nabla P - Q \nabla v) - \Div (g \nabla v) \\
\partial_t v &= \frac{1}{\eps} (\Delta v + g) + \mu x \cdot \nabla v, 
\end{aligned}
\right.
\end{equation}
and reciprocally. Instead of working with solutions $(g,v)$ to \eqref{eq:KS_perturbationChap7} we shall rather work with the unknown $(g,w)$ defined by 
$$
w := v - \kappa * g,
$$
where $\kappa$ is the Laplace kernel in the plane 
\beqn\label{eq:def_kappa}
\kappa (z) := - \frac{1}{2\pi} \log |z|, \quad   \nabla\kappa(z) 
= - \frac{1}{2\pi} \frac{z}{|z|^2}, 
\eeqn
so that $ \kappa * \Omega$ is a solution to the Laplace equation $
- \Delta   (\kappa * \Omega) = \Omega$ in $\R^2$.  
We will therefore consider the  modified system
\begin{equation}\label{eq:gw}
\left\{
\begin{aligned}
\partial_t g  & = \Delta g + \Div (\mu x g - g \nabla P - Q \nabla \kappa*g - Q \nabla w) - \Div (g \nabla \kappa*g) - \Div (g \nabla w) \\
\partial_t w &= \frac{1}{\eps} \Delta w  + \mu x \cdot \nabla w + g 
\\
&+ \nabla \kappa * \left[ g \nabla P + Q \nabla \kappa*g + Q \nabla w \right] +  \nabla \kappa * \left[ g \nabla w + g \nabla \kappa *g \right] ,
\end{aligned}
\right.
\end{equation}
satisfied by   $(g,w)$,  that we complement  with an initial condition $(g_0 , w_0)$. It is worth mentioning that working with the unknown $w$ is convenient in order to get some nice information (improving estimates) in the limit $\eps \to 0$ what it is not the case when considering the 
unknown $v$ because of the singular term  $\frac{g}{\eps}$ at the right-hand side of the second equation of~\eqref{eq:KS_perturbationChap7} in the limt $\eps \to 0$.

\smallskip
We introduce the Banach spaces $\XX:=L^2_k \times (\dot H^s \cap \dot H^1)$ and {$\YY=H^1_k \times (\dot H^s \cap \dot H^2) $}, with $s \in (0,1)$, endowed with the norms
\begin{equation}\label{eq:def:XX&YY} 
\begin{aligned}
\| (g,w) \|_\XX &:= \| g \|_{L^2_k} + \| w \|_{\dot H^s}+\| w \|_{\dot {H}^1},\\
\| (g,w) \|_\YY &:= \| g \|_{H^1_k} + \| w \|_{\dot H^s} +\| w \|_{\dot {H}^2},
\end{aligned}
\end{equation}
where the weighted Lebesgue space $L^p_k(\R^2)$, for $1 \le p \le \infty$ and  $k \ge 0$, is defined by 
$$
L^p_k(\R^2) := \{ f \in L^1_{\rm loc}(\R^2); \,\, \| f \|_{L^p_k} := \| \langle x \rangle^k f  \|_{L^p} < \infty \}, \quad \langle x \rangle := (1+|x|^2)^{1/2},
$$
the norm of the higher-order Sobolev spaces $H^1_k (\R^2)$ is defined by 
$$
\| f \|_{H^1_k}^2 := \sum_{|\alpha| \le 1} \| \la x \ra^k \, \partial^\alpha f \|_{L^2}^2.
$$
and, for any $\sigma \in   \R_+$, the  homogeneous seminorm $\| \cdot \|_{\dot H^\sigma}$ and  the associated homogeneous Sobolev space $\dot H^\sigma$ are defined by 
$$
  \dot H^\sigma = \{ f \in \mathcal S'(\R^2) \mid \widehat f \in  L^1_{\rm loc}(\R^2), \  \| f \|_{\dot H^\sigma} :=  \|   | \xi |^\sigma \widehat f \|_{L^2} < \infty \}.
$$
We recall that,   on the plane, $\dot H^{\sigma}$ is a Hilbert space for $\sigma < 1$ (see \cite[Proposition 1.34]{MR2768550}).
We also denote by $H^{-1}_k$ the duality space of $H^1_k$  for the scalar product $\langle \cdot, \cdot \rangle_{L^2_k}$, namely
$$
\| \phi \|_{H^{-1}_k} = \sup_{\| f \|_{H^1_k} \le 1} \la \phi , f \ra_{L^2_k}  = \sup_{\| g \|_{H^1}\le 1} \left\la \langle x \rangle^k \phi , g \right\ra_{L^2} = \| \langle x \rangle^k \phi \|_{H^{-1}}, 
$$ 
so that we may identify 
$ H^{-1}_k = \bigl\{ F_0 + \Div F_1; \  F_i \in L^2_k
\bigr\}. 
$
For $k > 1$, so that $L^2_k \subset L^1$, we finally denote
$$
L^2_{k,0} : = \left\{ f \in L^2_k; \; \lla f \rra =  0 \right\}. 
$$

We may now  state our main result. 

\begin{theo}\label{theo:nonlinear_stabilityIntro}
Let us fix $\mu \in (0,\infty)$, $k > 3$ and $s \in (0,1)$. 
There are $\eps_0 , \eta_0>0$ such that for any $\eps \in (0,\eps_0)$ and any initial data $(g_0, w_0 )\in   \XX$ with $ \lla g_0 \rra =  0$ and  
  $\| (g_0, w_0) \|_{\XX} \le \eta_0$, there exists a unique global solution $(g,w)  \in L^\infty_t (\XX) \cap L^2_t (\YY) $ to~\eqref{eq:gw} which verifies
\begin{equation}\label{eq:theo:energyIntro}
\| (g,w) \|_{L^\infty_t (\XX)} + \| (g,w) \|_{L^2_t (\YY)} \lesssim \| (g_0, w_0) \|_{\XX}.
\end{equation}
Moreover, for any $\lambda \in (0,\mu(1-s))$,  we have the  exponential decay estimate 
\begin{equation}\label{eq:theo:decayIntro}
\|  (g(t),w(t)) \|_{\XX} \lesssim e^{-\lambda t} \| (g_0, w_0) \|_{\XX}, \quad \forall \, t \ge 0.
\end{equation}
\end{theo}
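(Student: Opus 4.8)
The plan is to prove Theorem~\ref{theo:nonlinear_stabilityIntro} by a Duhamel/semigroup analysis of \eqref{eq:gw}, bootstrapping from the known exponential stability of the linearized \emph{parabolic--elliptic} Keller--Segel operator, and then closing the nonlinear estimates by a fixed point argument in an exponentially weighted space.

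\emph{Step 1: the first-component semigroup by perturbation.} Linearizing \eqref{eq:gw} around $(0,0)$, the first equation reads $\partial_t g = \LL g - \Div(Q\nabla w)$, where $\LL g := \Delta g + \Div(\mu x g - g\nabla P - Q\nabla\kappa*g)$ is, up to the $\eps$-dependence of the profile $(Q,P)=(Q_\eps^\mu,P_\eps^\mu)$, exactly the linearized operator of the parabolic--elliptic Keller--Segel equation. By \cite{MR3196188,MR3466844} the latter generates a semigroup which is exponentially stable on the mean-zero space $L^2_{k,0}$ at a rate exceeding $\mu(1-s)$ (for the ranges of $k,s$ at hand), together with the parabolic regularization estimate $L^2_{k,0}\to H^1_{k}$. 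Since $Q_\eps^\mu\to Q_0^\mu$ and $\nabla P_\eps^\mu\to\nabla P_0^\mu$ in convenient norms as $\eps\to0$ (directly from \eqref{eq:KS_ssQPChap7}), the operator $\LL$ is a small perturbation of its $\eps=0$ version; as the perturbation is differential and nonlocal rather than bounded, one runs this at the level of a splitting $\LL=\AA+\BB$ with $\AA$ bounded and smoothing and $\BB$ hypodissipative, in the spirit of the enlargement-and-factorization method, and concludes that $e^{t\LL}$ is still exponentially stable on $L^2_{k,0}$ at rate $>\mu(1-s)$, with the same parabolic gain, uniformly for $\eps$ small.

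\emph{Step 2: the $w$-semigroup and linear stability of the coupled system.} The second equation of the linearization is $\partial_t w = \Lambda_\eps w + g + \nabla\kappa*[g\nabla P + Q\nabla\kappa*g + Q\nabla w]$, with $\Lambda_\eps := \tfrac1\eps\Delta + \mu x\cdot\nabla$. Conjugating out the drift through the rescaling $w(t,x)=W(t,e^{\mu t}x)$ turns $\Lambda_\eps$ into a heat evolution with time-dependent diffusivity $\tfrac1\eps e^{2\mu t}$, so that
\[
\widehat{e^{t\Lambda_\eps}h}(\xi) = e^{-2\mu t}\exp\!\Big(-\tfrac{(1-e^{-2\mu t})|\xi|^2}{2\mu\eps}\Big)\,\widehat h(e^{-\mu t}\xi).
\]
From this explicit formula one reads off $\|e^{t\Lambda_\eps}h\|_{\dot H^s}\lesssim e^{-\mu(1-s)t}\|h\|_{\dot H^s}$; the same exponential rate from $\dot H^s$ to $\dot H^1$ and $\dot H^2$ but with small constants $\eps^{(1-s)/2}$ and $\eps^{(2-s)/2}$ (the Gaussian localizes to frequencies $|\xi|\lesssim(\eps/t)^{1/2}$ for $t\gtrsim1$, interpolated against the $\dot H^s$ bound); and smoothing bounds $\|e^{t\Lambda_\eps}h\|_{\dot H^\sigma}\lesssim\eps^{\sigma/2}\min(t,1)^{-\sigma/2}e^{-\mu(1-s)t}\|h\|_{L^2\cap\dot H^{-1}}$ for the source terms. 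The mean-zero constraint $\lla g\rra=0$ together with $k>1$ gives $g\in\dot H^{-1}$, hence $\nabla\kappa*g\in L^2$, and $\nabla\kappa*[\,\cdot\,]$ maps $L^1\cap L^2$ into $\dot H^s$ — which is precisely why $s\in(0,1)$ is imposed (for $\sigma=1$ this low-frequency estimate fails). One then writes the coupled Duhamel system for $(g,w)$ and runs a fixed point in the weighted norm $\|e^{\lambda\cdot}(g,w)\|_{L^\infty_t\XX}+\|e^{\lambda\cdot}(g,w)\|_{L^2_t\YY}$ for any $\lambda<\mu(1-s)$: every feedback of $w$ into the $g$-equation through $\Div(Q\nabla w)$, and every contribution of $w$ to its $\dot H^1$, $\dot H^2$ norms, comes with a positive power of $\eps$ from the estimates above, so for $\eps$ small the off-diagonal coupling is a small perturbation of the decoupled, exponentially stable dynamics, whose limiting rate is exactly the $\eps$-independent decay $\mu(1-s)$ of the $\dot H^s$-mode of $w$. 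This yields that the full linearized semigroup $\SS_{\LL_\eps}(t)$ is exponentially stable on $\XX$ at rate $\lambda$, with the gain $\|\SS_{\LL_\eps}(\cdot)(g_0,w_0)\|_{L^2_t\YY}\lesssim\|(g_0,w_0)\|_\XX$.

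\emph{Step 3: nonlinear closure and conclusion.} Writing \eqref{eq:gw} in Duhamel form with the quadratic terms $N(g,w):=\big(-\Div(g\nabla\kappa*g)-\Div(g\nabla w),\ \nabla\kappa*(g\nabla w+g\nabla\kappa*g)\big)$ as a source, one solves by Banach fixed point in the ball of radius $\sim\eta_0$ of $\{(g,w)\in L^\infty_t(\XX)\cap L^2_t(\YY):\ \|e^{\lambda\cdot}(g,w)\|_{L^\infty_t\XX}+\|e^{\lambda\cdot}(g,w)\|_{L^2_t\YY}<\infty\}$. This rests on bilinear estimates $\|N(g,w)\|_{\mathrm{source}}\lesssim\|(g,w)\|_{\YY}\,\|(g,w)\|_{\XX}$, with one factor measured in the integrable $\YY$-norm, proved with the two-dimensional Keller--Segel toolbox ($L^\infty$ and Riesz-type bounds for $\nabla\kappa*$, Sobolev embeddings $H^1_k\hookrightarrow L^p_k$, the duality $H^{-1}_k$--$H^1_k$); the smallness of $\eta_0$ makes the map a contraction, which simultaneously gives existence, uniqueness in $L^\infty_t(\XX)\cap L^2_t(\YY)$, the energy bound \eqref{eq:theo:energyIntro}, and the decay \eqref{eq:theo:decayIntro}. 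I expect the main obstacle to be Steps~1 and~2: making the semigroup perturbation from $e^{t\LL_0}$ to $e^{t\LL}$ quantitative and uniform as $\eps\to0$ in spite of the unbounded nonlocal nature of the perturbation, and keeping precise track of the $\eps$-powers in the $e^{t\Lambda_\eps}$ estimates so that the coupled linear problem genuinely decouples to leading order; once these are in place, the nonlinear step is a lengthy but routine bilinear bookkeeping.
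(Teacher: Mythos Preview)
Your proposal is correct and follows the same overall architecture as the paper: (i) a perturbation argument transferring the spectral gap of the parabolic--elliptic linearized operator $\Lambda_0$ to $\LL_{1,1}=\Lambda_\eps$ via a splitting $\AA+\BB_\eps$ with $\AA$ bounded and $\BB_\eps$ hypodissipative; (ii) semigroup decay for the $w$-equation at rate $\mu(1-s)$ in $\dot H^s$; (iii) a coupled Duhamel analysis in which the off-diagonal feedback carries a positive power of $\eps$; (iv) a Banach fixed point in the exponentially weighted space $L^\infty_t\XX\cap L^2_t\YY$.

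The one genuine technical difference is in step (ii). The paper keeps the pseudodifferential term $\nabla\kappa*[Q\nabla w]$ inside the generator $\LL_{2,2}$ and establishes dissipativity directly by energy estimates in $\dot H^s$ and $\dot H^1$ (Lemmas~\ref{lem:LL2_dotH1}--\ref{lem:LL2_dotHs}); the $\tfrac1\eps\|w\|_{\dot H^{s+1}}^2$ dissipation absorbs this term via interpolation and Young. You instead work with the explicit Fourier kernel of the free drift--diffusion $\tfrac1\eps\Delta+\mu x\cdot\nabla$ and treat $\nabla\kappa*[Q\nabla w]$ as an additional source to be closed perturbatively. Both routes are valid. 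Your formula indeed gives $\|e^{t\Lambda_\eps}h\|_{\dot H^s}\le e^{-\mu(1-s)t}\|h\|_{\dot H^s}$ and the claimed $\eps$-gains in higher norms, but be aware that the key $L^2_t\dot H^2$ control with exponential weight does not follow from your pointwise-in-$t$ bounds (the short-time singularity $\sim(\eps/t)$ is not integrable): you must obtain it by integrating the Fourier representation in $t$ first, or---as the paper does more cleanly---by reading it off the energy identity $\tfrac{d}{dt}\|w\|_{\dot H^1}^2\le -\tfrac1\eps\|w\|_{\dot H^2}^2$. Once that is in place, your decoupling-by-$\eps$ argument and the paper's Proposition~\ref{prop:S_LL_eps} coincide, and the nonlinear step is indeed the routine bilinear bookkeeping you describe.
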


This result improves \cite[Theorem~1.4]{MR3615547} where similar estimates are established with the restriction that the initial datum is radially symmetric (and satisfies additional regularity and confinement conditions) and also improves  \cite{MR1929883,MR2295185,MR3227285} which deal with small initial data and arbitrary time scale parameter $\eps >0$.
 It is worth emphasizing that because of the one-to-one mapping $\mu \mapsto \MMM_\eps(\mu)$, the choice of a given drift parameter $\mu \in (0,\infty)$ and its associated steady state $Q^\mu_\eps$ here is equivalent to the choice of a given subcritical mass in $(0,8\pi)$ for the initial datum in \cite{MR3615547,MR1929883,MR2295185,MR3227285}.
 It is also worth underlining that Theorem~\ref{theo:nonlinear_stabilityIntro} implies that the corresponding solution $(F,U)$ to the parabolic-parabolic Keller-Segel equation in standard variables (i.e.\ $\mu=0$) satisfies
$$
F(t,x) \sim \frac1{R(t)^2} \, Q \left(   \frac{x}{R(t)} \right), \quad
U(t,x) \sim    P \left( \frac{x}{R(t)} \right), \quad\text{as}\quad t \to \infty,
$$
with $R(t) := (1+\mu t)^{1/2}$,  and we refer to \cite[Sec~1.]{MR3615547} for further discussions. 

\medskip

As said above, 
we shall always work with the unknown $(g,w)$ and it is worth stressing why the corresponding evolution system is given by \eqref{eq:gw}. We may indeed   observe  that if $(g,v)$ satisfies \eqref{eq:KS_perturbationChap7} then the function $w:=v-\kappa * g$
straightforwardly satisfies 
\bean
\partial_t w 
&=&  \frac1\eps \Delta w + \mu x \cdot \nabla w + \mu x \cdot \nabla \kappa*g - \nabla \kappa * [\nabla g +  \mu x g - g \nabla P - Q \nabla \kappa*g - Q \nabla w] \\
&+&  \nabla \kappa * \left[ g \nabla w + g \nabla \kappa *g \right]. 
\eean
Using that 
\bean
x \cdot \nabla \kappa*g - \nabla \kappa * (x g )  
= - \frac{1}{2 \pi} \int \frac{(x-y)}{|x-y|^2} \bigl\{ x \, g(y) - y \, g(y) \bigr\} \, dy  
= - \frac{1}{2 \pi}  \int_{\R^2} g(y) \, dy = 0,
\eean
because of the mass vanishing condition on $g$, 
the equation on $w$ simplifies and thus $(g,w)$ satisfies~\eqref{eq:gw}. Equivalently, defining the operator 
$$
\LL (g,w) = ( \LL_1(g,w) , \LL_2(g,w) )
$$
by
$$
\LL_i (g,w) = \LL_{i,1} g + \LL_{i,2} w, \quad i = 1,2, 
$$
with 
\begin{equation}\label{eq:KS_defLij}
\left\{
\begin{aligned}
\LL_{1,1} g &= \Delta g + \Div (\mu x g - g \nabla P - Q \nabla \kappa*g ), \quad 
\LL_{1,2} w = - \Div (Q \nabla w), \\
\LL_{2,1} g &=  g + \nabla \kappa * \left[ g \nabla P + Q \nabla \kappa*g \right], \quad
\LL_{2,2} w = \frac{1}{\eps} \Delta w  + \mu x \cdot \nabla w + \nabla \kappa * \left[  Q \nabla w \right], 
\end{aligned}
\right.
\end{equation}
the system~\eqref{eq:gw} on $(g,w)$ rewrites as 
\begin{equation}\label{eq:KS_perturbation_gw_bis}
\left\{
\begin{aligned}
&\partial_t (g,w) = \LL(g,w)  + \left( - \Div (g \nabla \kappa*g) - \Div (g \nabla w),  \nabla \kappa * \left[ g \nabla w + g \nabla \kappa *g \right]  \right)\\
& (g,w)_{|t=0} = (g_0 , w_0).
\end{aligned}
\right.
\end{equation}

\medskip

In Sections \ref{sec:estimQP} and \ref{sec:Fineq}, we present some estimates on the family of steady states $(Q,P)$ and some functional inequalities that will be useful throughout the paper. 
In Section~\ref{sec:EstimL11}, we establish the dissipativity of the operator $\LL_{1,1}$ and next the exponential decay of the associated semigroup for small enough values of $\eps > 0$,
thanks to a perturbation argument and by taking advantage of the dissipativity of the limit operator for $\eps = 0$ corresponding to the usual linearized parabolic-elliptic Keller-Segel operator. 
In Section~\ref{sec:EstimL12}, we prove in a more direct way the dissipativity of the operator $\LL_{2,2}$.
In Section~\ref{sec:EstimSGL}, we deduce then the decay of the semigroup $S_\LL$ associated to $\LL$ by writing in a proper accurate enough semigroup way the two decay estimates of $S_{\LL_{i,i}}$
and by showing that both out of the diagonal contributions $\LL_{i,j}$, $i\not=j$, are small enough. 
The above two arguments significantly differ from those used in the proof of  \cite[Theorem~1.4]{MR3615547}. 
In Section~\ref{sec:proofNLstab}, we finally present the proof of Theorem~\ref{theo:nonlinear_stabilityIntro} which is based on a classical nonlinear stability trick.

\medskip
In the sequel,  for two functions $S$ and $T$ defined on $\R_+$, we define the convolution $S*T$ by 
$$
(S*T)(t) = \int_0^t S(t-s) T(s) \, \d s, \quad \hbox{for all } t \ge 0,
$$
so that in particular the Duhamel formula associated to an evolution equation 
$$
\partial_t g = \Lambda g + G, \quad g(0) = g_0,
$$
writes 
$$
g= S_\Lambda g_0 + S_\Lambda * G.
$$
Moreover, for $\lambda \in \R$, we denote $ \mathbf{e}_\lambda : t \mapsto e^{\lambda t}$. We also write $A \simeq B$ if $A = c B$ for a numerical constant $c$ and $A \lesssim B$ when $A \le c B$ for a numerical constant $c>0$ and $A,B \ge 0$.

\section{Estimates over $Q$ and $P$}
\label{sec:estimQP}
 
We present some estimates on the steady states $Q = Q^\mu_\eps$ and $P=P^\mu_\eps$, that we recall satisfy~\eqref{eq:KS_ssQPChap7}, which will be useful in the next sections.

\begin{prop}\label{UniConChap7}
There exist $\varepsilon_0>0$ and   $\alpha_0 \in (0,1)$ such that  for all $\varepsilon\in(0,\varepsilon_0)$ 
 and $\alpha\in(\alpha_0,1)$ we have:
\begin{enumerate}
   \item (Bounds over $P$) For all $x\in\R^2$ there holds
\bear\label{CompP7}
   &&P^0(x)-\frac{\mu\alpha |x|^2}{2}<P(x)-\frac{\mu\alpha |x|^2}{2}<P^0(x)<P(x)<0,
   \\
   &&\label{CompdP7}
       x\cdot\nabla P(x)-\mu\alpha|x|^2< x\cdot\nabla P^0(x)< x\cdot\nabla P(x)<0.
\eear
   \item (Bounds over $Q$) For all $x\in\R^2$, there holds
\beqn 
   Q^0(x)e^{-\mu\frac{|x|^2}{2}}<Q(x)<Q^0(x)e^{-\mu(1-\alpha)\frac{|x|^2}{2}}.\label{eq:estimQunifChap7}
\eeqn
\end{enumerate}
\end{prop}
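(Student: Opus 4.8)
The plan is to treat $(Q,P)$ as a perturbation of the explicit parabolic-elliptic profile $(Q^0,P^0)$ and exploit the structure of the elliptic equations via comparison/maximum principles together with the radial ODE formulation. Since radial symmetry and smoothness of $(Q,P)$ are already granted by \cite{MR1929883,MR2806487,MR3227285}, I would write everything in the radial variable $r = |x|$ and reduce \eqref{eq:KS_ssQPChap7} to a pair of ODEs. From the first equation, $Q$ is determined by $P$ through the logarithmic-type relation obtained by integrating $\nabla \log Q = \nabla P - \mu x$ (using $Q(0)=8$ as normalization), so that heuristically $Q = 8 e^{P - P(0) - \mu |x|^2/2}$ up to the precise definition one gets from \eqref{eq:KS_ssQPChap7}; this immediately shows that item (2) will follow from the bounds on $P$ in item (1) once the constant is pinned down. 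The main content is therefore item (1), the two-sided bounds on $P$.

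For item (1), I would introduce the auxiliary function $\Phi := P - P^0$, which satisfies an elliptic equation of the form $\Delta \Phi = (Q^0 - Q) - \eps \mu\, x \cdot \nabla P$ obtained by subtracting the equation $\Delta P^0 = Q^0$ from $\Delta P + Q + \eps \mu\, x \cdot \nabla P = 0$ in \eqref{eq:KS_ssQPChap7}. The sign of the right-hand side is controlled by the interplay between the sign of $Q^0 - Q$ (which in turn depends on $\Phi$ through the relation of the previous paragraph) and the radial derivative term; this is where a barrier/comparison argument is needed. The idea is to use $\mu \alpha |x|^2/2$ as a barrier: one checks that $P^0 - \mu\alpha|x|^2/2$ is a subsolution and $P^0$ (or $0$) a supersolution of the equation satisfied by $P$, for $\alpha$ sufficiently close to $1$ and $\eps$ sufficiently small, so that the maximum principle yields the chain of inequalities in \eqref{CompP7}. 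The derivative estimate \eqref{CompdP7} should then follow by differentiating the radial ODE: $\partial_r(r \partial_r P)$ is expressed through $Q$ and the drift term, and integrating from $0$ (where $\nabla P(0)=0$ by smoothness and radiality) gives a representation $x \cdot \nabla P(x) = -\frac{1}{|x|}\int_0^{|x|} \rho\, Q(\rho)\, \rho\, d\rho$-type formula, after which \eqref{CompP7} plugged into the corresponding formula for $P^0$ delivers the comparison.

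The delicate point, and the one I expect to be the main obstacle, is closing the comparison argument self-consistently: the sign of $Q^0 - Q$ that drives the elliptic inequality for $\Phi$ itself depends on the sign of $\Phi$ (through $Q = 8 e^{P-P(0)-\mu|x|^2/2}$ versus $Q^0 = 8 e^{P^0 - \mu|x|^2/2 - P^0(0) + \dots}$, modulo the exact normalization), so one cannot simply invoke a linear maximum principle — it has to be set up as a fixed-point-type or bootstrap comparison, or one must verify directly that the explicit barriers $P^0$ and $P^0 - \mu\alpha|x|^2/2$ are genuine super/sub-solutions of the \emph{nonlinear} problem for $P$. Additionally one must control the far-field behaviour (the inequalities are claimed for all $x\in\R^2$, so the comparison principle on the unbounded domain $\R^2$ requires a growth condition at infinity, which here is supplied by the known asymptotics of $P$ and $P^0$ and the fact that both behave like $-\frac{1}{2\pi}\lla Q\rra \log|x|$ plus lower order, with a correction of order $|x|^2$ coming from the drift). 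Once these sign and decay issues are handled, the estimate \eqref{eq:estimQunifChap7} on $Q$ is a direct substitution, using $0 < \alpha < 1$ to get the two exponential weights $e^{-\mu|x|^2/2}$ and $e^{-\mu(1-\alpha)|x|^2/2}$.
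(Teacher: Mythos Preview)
Your high-level plan---radial reduction, comparison of $P$ against $P^0$ and $P^0-\mu\alpha|x|^2/2$, and then reading off the $Q$ bounds by exponentiating via $Q=8e^{P-\mu|x|^2/2}$---matches the paper's structure, and you correctly flag the nonlinear feedback as the crux. However, the concrete mechanism you propose (verify that $P^0$ and $P^0-\mu\alpha|x|^2/2$ are super/sub-solutions of the nonlinear elliptic problem and invoke a comparison principle) does not close: the nonlinearity $\phi\mapsto 8e^{\phi-\mu|x|^2/2}$ is \emph{increasing}, so the elliptic comparison principle is not available (there can be multiple ordered solutions), and a direct computation actually gives $\mathcal N[P^0]\le 0$, which is the wrong sign for the inequality $P^0<P$ you want. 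You note this is ``the delicate point'' but do not supply a resolution.

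The paper circumvents this by working entirely with the radial ODE and its double-integral representation
\[
P(r)=-8\int_0^r\frac{e^{-\mu\eps\rho^2/2}}{\rho}\int_0^\rho \tau\,e^{P(\tau)-\mu(1-\eps)\tau^2/2}\,d\tau\,d\rho,
\]
together with the analogous formula for $P^0$. A Taylor expansion at $r=0$ gives the full chain \eqref{CompP7} locally; then a first-crossing argument (assume $P^0(r_1)=P(r_1)$ at the smallest $r_1$, substitute into the integral identities, and use the \emph{already established} companion inequality $P-\mu(1-\eps)r^2/2<P^0$ on $(0,r_1)$) produces a strict sign contradiction. Thus the two inequalities in \eqref{CompP7} are proved in a coupled way, each feeding into the other through the integral formula---this is the bootstrap you allude to, but made concrete. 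For the lower bound $P-\mu\alpha r^2/2<P^0$ the paper uses a genuinely different device: it bounds the relevant quantity by an auxiliary function $\tilde\Psi(\alpha,r,\mu)$ with $\tilde\Psi(\alpha,r,0)=P^0(r)$ and checks $\partial_\mu\tilde\Psi(\alpha,r,0)<0$ for $\alpha$ close to $1$, which is a perturbative argument in $\mu$ rather than a barrier comparison. Neither of these two ingredients---the first-crossing use of the integral representation, and the $\mu$-derivative trick---appears in your outline; without them the proof does not go through.
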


\begin{proof}[Proof of Proposition \ref{UniConChap7}]
In order to prove \eqref{CompP7} and \eqref{CompdP7}, we follow the same ideas as in the proof of \cite[Proposition 4.1]{MR3861072}, but including the necessary modifications to handle the terms depending on $\varepsilon$ that appear for this new problem.

First notice that $P^0$ and $P$ are radial functions solving the equations
\begin{align*}
    &\Delta P^0(x)=-Q^0(x)=-8e^{P^0(x)}, 
    \quad  P^0(0)= 0 ,
    \\ 
    &\Delta P (x)+\mu\varepsilon x\cdot\nabla P(x)=-Q(x)=-8e^{P(x)-\mu\frac{|x|^2}{2}},
       \quad  P(0)= 0.
\end{align*}
In polar variables, these equations read as
\begin{align}
    & (P^0)''(r)+\frac{1}{r}(P^0)'(r)=-8e^{P^0(r)}, 
    \quad P^0(0)= (P^0)'(0)=0, \nonumber\\
    &(P)''(r)+\left(\frac{1}{r}+\mu\varepsilon r\right) (P)'(r)=-8e^{P(r)-\mu\frac{|r|^2}{2}},
    \quad P(0)= P'(0)=0.\label{Pmepol}
\end{align}
Solving these two equations, we  get
\begin{align}
    P^0(r)&=-8\int_0^r\frac{1}{\rho}\int_0^{\rho}\tau e^{P^0(\tau)}d\tau d\rho,\label{PolImP}\\
    P(r)&=-8\int_0^r\frac{e^{-\mu\varepsilon\frac{\rho^2}{2}}}{\rho}\int_0^{\rho}\tau e^{P(\tau)-\mu(1-\varepsilon)\frac{\tau^2}{2}}d\tau d\rho.\label{PolImPme}
\end{align}
Plugging an expansion in powers of $r$ up to order 4 for $P(r)$ in \eqref{Pmepol}, the coefficients of such expansion can be computed, which gives
\begin{align*}
    P(r)-\mu\alpha\frac{r^2}{2}&=-(2+\frac{\mu\alpha}{2})r^2+(1+\frac{\mu}{4}(1+\varepsilon))r^4+o(r^4),\\
    P^0(r)&=-2r^2+r^4+o(r^4),\\
    P(r)&=-2r^2+(1+\frac{\mu}{4}(1+\varepsilon))r^4+o(r^4),
\end{align*}
 for any given $\alpha\in(0,1)$.  This implies that there exists  $r_0 = r_0(\alpha)>0$ 
such that 
the following relation holds true
\beqn\label{eq:estimP0-r2P-r2P0P0}
P^0(r)-\frac{\mu\alpha |r|^2}{2}<P(r)-\frac{\mu\alpha |r|^2}{2}<P^0(r)<P(r)<0, \quad \forall \, r \in (0,r_0). 
\eeqn
Set $\alpha=1-\varepsilon$ and assume now, by contradiction,  that there exists $r_1>0$ such that $P^0(r_1)=P(r_1)$ and 
$$
P(r)-\frac{\mu(1-\varepsilon) |r|^2}{2}<P^0(r)<P(r),  \quad \forall \, r \in (0,r_1). 
$$
Using \eqref{PolImP} and \eqref{PolImPme}, we get
\begin{align*}
    0&=P^0(r_1)-P(r_1)\\
    &=-8\int_0^{r_1}\frac{1}{\rho}\int_0^{\rho}\tau \left(e^{P^0(\tau)}-e^{P(\tau)-\mu(1-\varepsilon)\frac{\tau^2}{2}-\mu\varepsilon\frac{\rho^2}{2}}\right)d\tau d\rho\\
    &\le -8\int_0^{r_1}\frac{1}{\rho}\int_0^{\rho}\tau \left(e^{P^0(\tau)}-e^{P(\tau)-\mu(1-\varepsilon)\frac{\tau^2}{2}}\right)d\tau d\rho< 0, 
\end{align*}
the last strict inequality being due to the second inequality in \eqref{eq:estimP0-r2P-r2P0P0}. 
This is a contradiction and therefore $P_0(r)<P(r)$ for all $r>0$.

On the other hand, suppose by contradiction again that there exist $\alpha \in (0,1)$ and $r_\alpha >0 $ such that 
$$
\Psi^{\mu}_{\varepsilon}(\alpha,r_\alpha):=P(r_\alpha)-\frac{\mu\alpha |r_\alpha|^2}{2}=P^0(r_\alpha)
$$
and
$$
P(r)-\frac{\mu \alpha|r|^2}{2}<P^0(r)<P(r), \quad \forall \, r \in (0,r_\alpha).
$$
Using   \eqref{PolImPme}, we have
\begin{align*}
    \Psi^{\mu}_{\varepsilon}(\alpha,r_\alpha)&=-8\int_0^{r_\alpha}\frac{1}{\rho}\int_0^{\rho}\tau e^{P(\tau)-\mu(1-\varepsilon)\frac{\tau^2}{2}-\mu\varepsilon\frac{\rho^2}{2}}d\tau d\rho-\frac{\mu\alpha |r_\alpha|^2}{2}\\
    &<-8\int_0^{r_\alpha}\frac{1}{\rho}\int_0^{\rho}\tau e^{P^0(\tau)-\mu(1-\varepsilon)\frac{\tau^2}{2}-\mu\varepsilon\frac{\rho^2}{2}}d\tau d\rho-\frac{\mu\alpha |r_\alpha|^2}{2}=:\tilde{\Psi}(\alpha,r_\alpha,\mu).
\end{align*}
Notice that $\tilde{\Psi}(\alpha,r_\alpha,0)=P^0(r_\alpha)$. 
If we prove that there exist values of $\alpha$ such that $\partial_{\mu}\tilde{\Psi}(\alpha,r_\alpha,0)<0$ then, in a neighborhood of $\mu=0$, 
we would have 
$$\Psi^{\mu}_{\varepsilon}(\alpha,r_\alpha)<\tilde{\Psi}(\alpha,r_\alpha,\mu)<\tilde{\Psi}(\alpha,r_\alpha,0)=P^0(r_\alpha),
$$
which would be a contradiction.
Since there exist $\varepsilon_0>0$ and  $\alpha_0 > 0$
such that for all $(\varepsilon,\alpha)\in [0,\varepsilon_0]\times[\alpha_0,1]$, the function
\begin{align*}
    \partial_r\partial_{\mu} \tilde{\Psi}(\alpha,r,0)&=\frac{4(1-\varepsilon)}{r}\int_0^r\tau^3e^{P^0(\tau)}d\tau+4\varepsilon r\int_0^r\tau e^{P^0(\tau)}d\tau-\alpha r\\
    &=\frac{2(1-\varepsilon)}{r}\left(\ln(1+r^2)+\frac{1}{1+r^2}-1\right)+4\varepsilon r\left(1-\frac{1}{1+r^2}\right)-\alpha r
\end{align*}
is less than $0$ for all $r>0$, we deduce that
$$
\partial_{\mu}\tilde{\Psi}(\alpha,r_\alpha,0)=\int_0^{r_\alpha}\partial_r\partial_{\mu} \tilde{\Psi}(\alpha,r,0)dr<0,
$$
which leads to the desired contradiction and finishes the proof of \eqref{CompP7}. We may establish~\eqref{CompdP7} in a very similar way, but using the expressions for $x\cdot\nabla P^0$ and $x\cdot\nabla P$. 
We finally prove \eqref{eq:estimQunifChap7} by taking the exponential of the estimate~\eqref{CompP7}.
\end{proof}

\begin{lem}\label{lem:UnifBddQP} There exist some constants $C_i >0$, $i=0,\ldots,3$, $\eps_0 > 0$ and $\vartheta\in(0,1)$, such that for any $\mu \in (0,\infty)$ and any $\eps \in (0,\varepsilon_0]$, 
there hold
\beqn\label{eq:estimUnifQ}
 0 \le Q(x) \le C_0 \, e^{-\mu \vartheta|x|^2/2} \langle x \rangle^{-4},
\eeqn
\beqn\label{eq:NablaPuniformBound7}
 \sup_{x \in \R^2} (\frac{1}{|x|} + \langle x \rangle) \, |\nabla P(x)| \le C_1,
\eeqn
and
\beqn\label{eq:borneDeltaVeps8}
|\Delta P | \le C_2 \mu \eps + C_3 \langle x \rangle^{-1}. 
 \eeqn

\end{lem}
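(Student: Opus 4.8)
The plan is to deduce the three bounds from Proposition~\ref{UniConChap7}, the explicit form of $P^0$, and the stationary equation \eqref{eq:KS_ssQPChap7} for $(Q,P)$.

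Estimate \eqref{eq:estimUnifQ} is almost immediate. Fix once and for all some $\alpha\in(\alpha_0,1)$ (with $\alpha_0,\eps_0$ as in Proposition~\ref{UniConChap7}) and set $\vartheta:=1-\alpha\in(0,1)$. The upper bound in \eqref{eq:estimQunifChap7} then reads $Q(x)<Q^0(x)\,e^{-\mu\vartheta|x|^2/2}=8\,\langle x\rangle^{-4}e^{-\mu\vartheta|x|^2/2}$, which gives \eqref{eq:estimUnifQ} with $C_0=8$, while $Q\ge 0$ follows from the lower bound in \eqref{eq:estimQunifChap7} (or from the positivity-preservation property); note that both the constant and $\vartheta$ are independent of $\mu$ and $\eps$.

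For \eqref{eq:NablaPuniformBound7}, I would first use that $P$ is radial, so that $|\nabla P(x)|=|P'(r)|$ and $x\cdot\nabla P(x)=rP'(r)$ with $r=|x|$; dividing \eqref{CompdP7} by $r>0$ gives $(P^0)'(r)<P'(r)<0$, hence the pointwise domination $|\nabla P(x)|\le|\nabla P^0(x)|$ for all $x$. It then suffices to bound $\nabla P^0$, which is explicit: integrating $\Delta P^0=-Q^0$ in polar coordinates yields $(P^0)'(r)=-\tfrac1r\int_0^r\tau\,Q^0(\tau)\,d\tau=-4r/(1+r^2)$, i.e.\ $|\nabla P^0(x)|=4|x|/\langle x\rangle^2$. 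Therefore
$$
\Bigl(\tfrac1{|x|}+\langle x\rangle\Bigr)\,|\nabla P(x)|\le\Bigl(\tfrac1{|x|}+\langle x\rangle\Bigr)\frac{4|x|}{\langle x\rangle^2}=\frac{4}{\langle x\rangle^2}+\frac{4|x|}{\langle x\rangle}\le 8,
$$
which is \eqref{eq:NablaPuniformBound7} with $C_1=8$.

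Finally, for \eqref{eq:borneDeltaVeps8} I would use the second equation in \eqref{eq:KS_ssQPChap7} in the form $\Delta P=-Q-\eps\mu\,x\cdot\nabla P$ and estimate the two terms separately: the first by \eqref{eq:estimUnifQ} together with $\langle x\rangle^{-4}\le\langle x\rangle^{-1}$, and the second by $\eps\mu\,|x\cdot\nabla P(x)|=\eps\mu\,|x|\,|\nabla P(x)|\le\eps\mu\,|x|\,|\nabla P^0(x)|=4\eps\mu\,|x|^2/\langle x\rangle^2\le 4\eps\mu$. This gives \eqref{eq:borneDeltaVeps8} with $C_2=4$ and $C_3=C_0=8$. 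None of these steps is really an obstacle once Proposition~\ref{UniConChap7} is in hand; the only points that need a little care are the sign bookkeeping in the polar computation of $P^0$ and checking that the constants — in particular the coefficient $C_2$ of $\mu\eps$ in \eqref{eq:borneDeltaVeps8} — do not depend on $\mu$ or $\eps$, which is precisely what makes \eqref{eq:borneDeltaVeps8} informative in the regime $\eps\to0$.
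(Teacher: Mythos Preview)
Your proof is correct, and for \eqref{eq:estimUnifQ} and \eqref{eq:borneDeltaVeps8} it matches the paper's approach exactly. For \eqref{eq:NablaPuniformBound7} there is a small but genuine difference worth noting: the paper first derives the explicit integral representation
\[
\nabla P(x)=-e^{-\mu\eps|x|^2/2}\,\frac{x}{|x|^2}\int_0^{|x|}Q(r)\,e^{\mu\eps r^2/2}\,r\,dr,
\]
and then inserts the bound \eqref{eq:estimUnifQ} on $Q$ into the integral (requiring $\eps<\vartheta$) to arrive at $|\nabla P|\le |x|\langle x\rangle^{-2}$ up to a constant. You instead read off the pointwise domination $|\nabla P|\le|\nabla P^0|$ directly from \eqref{CompdP7} (dividing by $r$) and use the closed form $|\nabla P^0(x)|=4|x|/\langle x\rangle^2$. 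Your route is shorter, yields explicit constants ($C_1=8$, $C_2=4$), and avoids the extra smallness restriction on $\eps$ in this step; the paper's integral formula, on the other hand, is self-contained in the sense that it only uses the already-proved bound on $Q$ rather than the full comparison \eqref{CompdP7}, but since both come from Proposition~\ref{UniConChap7} this is not a real gain.
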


\begin{proof}[Proof of Lemma \ref{lem:UnifBddQP}]
    Consider the values of $\varepsilon_0> 0$ and $\alpha_0 > 0$ given in   Proposition \ref{UniConChap7}, so that 
 from its proof,   $\vartheta:=1-\alpha_0 \in (0,1)$ is independent from $\mu$ and $\varepsilon$. 
 The estimate \eqref{eq:estimUnifQ} is then nothing but  \eqref{eq:estimQunifChap7}.
    Computing the explicit expression for $\nabla P$ gives
    $$
    \nabla P=-e^{-\mu\varepsilon\frac{|x|^2}{2}}\frac{x}{|x|^2}\int_0^{|x|}Q(r)e^{\mu\varepsilon\frac{r^2}{2}}r dr,
    $$
   and  hence
    $$
    |\nabla P|\le \frac{1}{|x|}\int_0^{|x|}Q(r)e^{\mu\varepsilon\frac{r^2}{2}}dr\le \frac{1}{|x|}\int_0^{|x|}\langle r\rangle^{-4}e^{-\mu(\vartheta-\varepsilon)\frac{r^2}{2}}r dr.
    $$
     Taking $\varepsilon$ small enough, we deduce
    $$
    |\nabla P|\le \frac{1}{|x|}\int_0^{|x|}\langle r\rangle^{-4}r dr\le |x|\langle x\rangle^{-2},
    $$
    which directly implies \eqref{eq:NablaPuniformBound7}.
    Finally, writing 
    $$
    \Delta P=-\mu\varepsilon x\cdot \nabla P-Q,
    $$
      we conclude to \eqref{eq:borneDeltaVeps8} thanks to \eqref{CompdP7} and \eqref{eq:estimUnifQ}.
\end{proof}

\begin{lem}\label{lem:eps0}
There exist some constants $\vartheta\in (0,1)$, $C_i>0$, $i=1,\ldots,4$, such that for any $\mu \in (0,\infty)$, any $\eps \in (0,\varepsilon_0]$ and any $x\in\R^2$, there holds
\begin{align}
    |\nabla P_\eps^\mu -\nabla P_0^{\mu}| &\le  \sqrt{\mu\varepsilon} C_1,
    \label{dPmedPm07}\\
        |\Delta P_\eps^\mu - \Delta P_0^{\mu}| &\le \mu\varepsilon C_2,
    \label{PmePm07}\\
    |Q_\eps^\mu -Q_0^{\mu}| &\le \mu\varepsilon C_3 e^{-\frac{\vartheta\mu|x|^2}{2}},
    \label{QmeQm07}\\
    |\nabla Q_\eps^\mu -\nabla Q_0^{\mu}| &\le \mu\varepsilon C_4 e^{-\frac{\vartheta\mu|x|^2}{2}}.\label{dQmedQm07}
\end{align}
\end{lem}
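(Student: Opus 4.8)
The plan is to estimate the $\eps$-dependence of $(Q^\mu_\eps, P^\mu_\eps)$ by viewing the difference $(Q^\mu_\eps - Q^\mu_0, P^\mu_\eps - P^\mu_0)$ through the integral representations already available for the radial profiles, together with the uniform bounds from Proposition \ref{UniConChap7} and Lemma \ref{lem:UnifBddQP}. Throughout, $\vartheta := 1-\alpha_0 \in (0,1)$ is the exponent fixed in the proof of Lemma \ref{lem:UnifBddQP}, and I shall use that $Q^\mu_\eps(x) \le C_0 e^{-\mu\vartheta|x|^2/2}\langle x\rangle^{-4}$ uniformly in $\eps \in (0,\eps_0]$, so in particular the same Gaussian-times-polynomial bound holds for $Q^\mu_0$. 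First I would record, as in the proof of Lemma \ref{lem:UnifBddQP}, the explicit formula
\begin{equation*}
\nabla P^\mu_\eps(x) = - e^{-\mu\eps|x|^2/2}\,\frac{x}{|x|^2}\int_0^{|x|} Q^\mu_\eps(r)\, e^{\mu\eps r^2/2}\, r\, dr,
\end{equation*}
and the analogous formula for $\eps = 0$ (i.e.\ with the exponential factors removed and $Q^\mu_0$ in place of $Q^\mu_\eps$).

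For \eqref{QmeQm07}: writing the equation for the difference $D := Q^\mu_\eps - Q^\mu_0$ — equivalently for $P^\mu_\eps - P^\mu_0$ via \eqref{PolImPme} — one gets a fixed-point type identity whose right-hand side carries a factor $\mu\eps$ coming from the terms $e^{-\mu\eps\rho^2/2} - 1$ and $e^{-\mu(1-\eps)\tau^2/2} - e^{-\mu\tau^2/2}$; each such factor is bounded by $\mu\eps\,(\rho^2/2)$ (resp.\ $\mu\eps\,(\tau^2/2)$), and the superfluous polynomial growth in $\rho,\tau$ is absorbed by the Gaussian weight $e^{-\mu\vartheta|x|^2/2}$ in the pointwise bound for $Q$ (shrinking $\vartheta$ slightly if needed, and using $\eps \le \eps_0$ small so that $\vartheta - \eps > 0$). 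This yields $|D(x)| \le C_3\,\mu\eps\, e^{-\vartheta\mu|x|^2/2}$. For \eqref{dPmedPm07}: subtract the two formulas for $\nabla P$; the difference splits into $(e^{-\mu\eps|x|^2/2} - 1)$ times the $\eps = 0$ integral — bounded using $|e^{-\mu\eps|x|^2/2}-1| \le \mu\eps|x|^2/2$ together with $|\nabla P^\mu_0(x)| \lesssim |x|\langle x\rangle^{-2}$ — plus $e^{-\mu\eps|x|^2/2}|x|^{-1}$ times $\int_0^{|x|}\big(Q^\mu_\eps(r)e^{\mu\eps r^2/2} - Q^\mu_0(r)\big)\,r\,dr$, which is controlled by \eqref{QmeQm07} and by the elementary bound $|e^{\mu\eps r^2/2}-1|\le \mu\eps r^2/2$ against the Gaussian decay of $Q$. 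The worst region is $|x|$ of order $1/\sqrt{\mu\eps}$, where balancing $\mu\eps|x|^2$ against the Gaussian tail produces exactly the claimed $\sqrt{\mu\eps}$ rate rather than $\mu\eps$. For \eqref{PmePm07}: use $\Delta P^\mu_\eps = -\mu\eps\, x\cdot\nabla P^\mu_\eps - Q^\mu_\eps$ and $\Delta P^\mu_0 = -Q^\mu_0$, so that $|\Delta P^\mu_\eps - \Delta P^\mu_0| \le \mu\eps\,|x\cdot\nabla P^\mu_\eps| + |Q^\mu_\eps - Q^\mu_0| \le \mu\eps\,C_1 + \mu\eps\, C_3$ by \eqref{eq:NablaPuniformBound7} and \eqref{QmeQm07}. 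Finally \eqref{dQmedQm07} follows by differentiating the relation $Q^\mu_\eps = 8\, e^{P^\mu_\eps - \mu|x|^2/2}$, which gives $\nabla Q^\mu_\eps = Q^\mu_\eps\,(\nabla P^\mu_\eps - \mu x)$; subtracting the $\eps = 0$ version, writing $\nabla Q^\mu_\eps - \nabla Q^\mu_0 = (Q^\mu_\eps - Q^\mu_0)(\nabla P^\mu_\eps - \mu x) + Q^\mu_0(\nabla P^\mu_\eps - \nabla P^\mu_0)$, and invoking \eqref{QmeQm07}, \eqref{eq:NablaPuniformBound7}, \eqref{eq:estimUnifQ} and \eqref{dPmedPm07} — here one must note that \eqref{dPmedPm07} only gives $\sqrt{\mu\eps}$, so to recover the $\mu\eps$ rate in \eqref{dQmedQm07} one should instead estimate $Q^\mu_0(x)\,|\nabla P^\mu_\eps - \nabla P^\mu_0|$ directly from the integral formula (exploiting the Gaussian prefactor $Q^\mu_0$ to kill the bad $|x|\sim 1/\sqrt{\mu\eps}$ region), rather than through the pointwise bound \eqref{dPmedPm07}.

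The main obstacle I anticipate is bookkeeping the exponents: one repeatedly trades a polynomial factor $|x|^{2j}$ (from expanding $e^{\pm \mu\eps|x|^2/2} - 1$ or from the $r$-integrals) against a Gaussian $e^{-\mu\vartheta|x|^2/2}$, and one has to make sure that (i) after finitely many such trades there remains a genuine Gaussian with a strictly positive exponent uniform in $\mu$ and $\eps$, which forces taking $\eps_0$ small and possibly replacing $\vartheta$ by a smaller $\vartheta' \in (0,1)$, and (ii) the constants extracted stay independent of $\mu$ — this works because every appearance of $\mu$ comes paired with $|x|^2$ inside the exponential, so rescaling $y = \sqrt{\mu}\,x$ makes all the relevant integrals $\mu$-free. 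Once this is set up, each of the four estimates is a short computation along the lines sketched above.
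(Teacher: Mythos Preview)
Your approach is essentially the paper's, with the same ingredients (the radial integral formulas, the uniform bounds from Proposition~\ref{UniConChap7} and Lemma~\ref{lem:UnifBddQP}, and the interpolation at $|x|\sim 1/\sqrt{\mu\eps}$). One point deserves to be made explicit: in your ``fixed-point type identity'' for $P^\mu_\eps - P^\mu_0$, the right-hand side does not carry a factor $\mu\eps$ outright. Besides the source terms $e^{-\mu\eps\rho^2/2}-1$ and $e^{-\mu(1-\eps)\tau^2/2}-e^{-\mu\tau^2/2}$ that you list, the integrand also contains $e^{P^\mu_\eps}-e^{P^\mu_0}$, i.e.\ the unknown itself. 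The paper closes this by Gr\"onwall, obtaining first the quadratic-growth bound $|P^\mu_\eps - P^\mu_0|\le C_0\,\mu\eps\,|x|^2$ (and its gradient analogue $|\nabla P^\mu_\eps-\nabla P^\mu_0|\le C_0\,\mu\eps\,|x|$), and only then deduces the four estimates; without naming this step your sketch reads as if the $\mu\eps$ factor is immediate, which it is not.

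Your remark on \eqref{dQmedQm07} is well taken and in fact sharper than what the paper spells out: using the post-interpolation bound $\sqrt{\mu\eps}$ on $\nabla P^\mu_\eps-\nabla P^\mu_0$ would indeed lose a factor; one should feed in the pre-interpolation bound $\mu\eps\,|x|$ and let the Gaussian prefactor $Q^\mu_0$ absorb the extra $|x|$, exactly as you say.
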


\begin{proof}[Proof of Lemma \ref{lem:eps0}]
We recall that in radial variables, we have the expressions
\begin{align*}
    P^{\mu}_0(r)=&-8\int_0^r\frac{1}{\rho}\int_0^{\rho}e^{P^{\mu}_0-\mu\frac{\tau^2}{2}}\tau d\tau d\rho,\\
    P^{\mu}_{\varepsilon}(r)=&-8\int_0^r\frac{e^{-\mu\varepsilon\frac{\rho^2}{2}}}{\rho}\int_0^{\rho}e^{P^{\mu}_{\varepsilon}-\mu(1-\varepsilon)\frac{\tau^2}{2}}\tau d\tau d\rho
\end{align*}
which imply
\begin{align*}
    P^{\mu}_{\varepsilon}-P^{\mu}_{0}=&\left(P^{\mu}_{\varepsilon}-\int_0^r (P^{\mu}_{\varepsilon})'e^{\mu\varepsilon\frac{\tau^2}{2}}d\tau\right)+\left(\int_0^r (P^{\mu}_{\varepsilon})'e^{\mu\varepsilon\frac{\tau^2}{2}}d\tau-P^{\mu}_0\right)\\
    =&-\int_0^r\frac{e^{-\mu\varepsilon\frac{\rho^2}{2}}-1}{\rho}\int_0^{\rho}Q^{\mu}_{\eps}(\tau)e^{\mu\varepsilon\frac{\tau^2}{2}}\tau d\tau d\rho\\
    &-8\int_0^{r}\frac{1}{\rho}\int_0^{\rho}(e^{P^{\mu}_{\eps}-\mu(1-\varepsilon)\frac{\tau^2}{2}}-e^{P^{\mu}_{0}-\mu\frac{\tau^2}{2}})\tau d\tau.
\end{align*}
Directly from Proposition \ref{UniConChap7}, we know that $\int_0^{\rho}Q^{\mu}_{\eps}(\tau)e^{\mu\varepsilon\frac{\tau^2}{2}}\tau d\tau\le \int_0^{\rho}Q^0(\tau)\tau d\tau\le 8\pi$, and the mean value theorem gives us that
$$
e^{P^{\mu}_{\eps}-\mu(1-\varepsilon)\frac{\tau^2}{2}}-e^{P^{\mu}_0-\mu\frac{\tau^2}{2}}=(P^{\mu}_{\eps}-P^{\mu}_0+\mu\varepsilon \frac{\tau^2}{2})e^{h(\tau)}, 
$$
with $h(\tau)$ satisfying
$$
h(\tau)\le \max\{P^{\mu}_{\varepsilon}-\mu(1-\varepsilon)\frac{\tau^2}{2},P^{\mu}_{0}-\mu\frac{\tau^2}{2}\}\le P^0 -\frac{\vartheta\mu\tau^2}{2}.
$$
Thanks to Proposition \ref{UniConChap7}, we deduce 
$$
8e^{h(\tau)}\le Q^0(\tau)e^{-\frac{\vartheta\mu\tau^2}{2}}.
$$
Putting everything together, we get
\begin{align*}
    |P^{\mu}_{\eps}-P^{\mu}_0| &\le 8\pi\int_0^r\frac{1-e^{-\frac{\mu\varepsilon\rho^2}{2}}}{\rho}d\rho+\int_0^r\frac{1}{\rho}\int_0^{\rho}|P^{\mu}_{\eps}-P^{\mu}_0+\mu\varepsilon \frac{\tau^2}{2}|Q^0(\tau)\tau d\tau d\rho\\
    &\le  \mu\varepsilon K r^2+\int_0^r\int_0^{\rho}|P^{\mu}_{\eps}-P^{\mu}_0|Q^0(\tau)d\tau d\rho,
\end{align*}
where $K$ is a constant independent of $\mu$ and $\varepsilon$. Integrating by parts the integral term, we get
$$
|P^{\mu}_{\eps}-P^{\mu}_0|\le \mu\varepsilon K r^2+r\int_0^r|P^{\mu}_{\eps}-P^{\mu}_0|Q^0(\tau)d\tau,
$$
which thanks to Gr\"onwall's Lemma gives
$$
|P^{\mu}_{\eps}-P^{\mu}_0|\le\mu\varepsilon C_0|x|^2.
$$
This estimate together with a similar manipulation on the gradients of $P^{\mu}_{\varepsilon}$ and $P^{\mu}_{0}$ gives
 $$
|\nabla P^{\mu}_{\eps}-\nabla P^{\mu}_0|\le\mu\varepsilon C_0|x|,
$$
which interpolated with \eqref{eq:NablaPuniformBound7} gives \eqref{dPmedPm07}.\\
On the other hand, we have
\bean
   |Q^{\mu}_{\eps}-Q^{\mu}_0| &=&8|e^{P^{\mu}_{\varepsilon}-\mu(1-\varepsilon)\frac{r^2}{2}}-e^{P^{\mu}_0-\mu\frac{r^2}{2}}|\\
   &=&|P^{\mu}_{\varepsilon}-P^{\mu}_0+\mu\varepsilon \frac{r^2}{2}|8e^{h(r)} \\
  & \le& \mu\varepsilon C_2 |r|^2Q^0(r)e^{-\frac{\vartheta\mu r^2}{2}},
  \eean
  which is nothing but \eqref{QmeQm07}. 
Repeating the same process for $\nabla(Q-Q_{\mu})$ gives \eqref{dQmedQm07}.

Finally, using the equations for $P^{\mu}_{\eps}$ and $P^{\mu}_0$, we have
     $$
     \Delta(P^{\mu}_{\eps}-P^{\mu}_0)=-(Q^{\mu}_{\eps}-Q^{\mu}_{0})-\mu\varepsilon x\cdot \nabla P^{\mu}_{\eps}.
     $$
     We conclude to  \eqref{PmePm07} thanks to \eqref{QmeQm07} for the first term and thanks to \eqref{CompdP7} for the second one. 
\end{proof}

\section{Functional inequalities}
\label{sec:Fineq}

We gather in this section some functional inequalities that we shall use through the paper. First, we provide some estimates over the solution for the Poisson problem   in the spirit of 
 \cite[Lemma~B.2]{MR3615547}.
 
\begin{lem}\label{lem:borneKgHs} 
There holds 
\beqn\label{eq:KgH1}
\| \nabla \kappa * g \|_{\dot H^1} 
\lesssim  \| g \|_{ L^2} , \quad \forall g \in L^2.
 \eeqn
Moreover, for any $\sigma \in (0,1)$ there holds
\beqn\label{eq:KgHs}
\| \nabla \kappa * g \|_{\dot H^\sigma} 
\lesssim  \| g \|_{L^1 \cap L^2}   , \quad \forall g \in L^1 \cap L^2.
 \eeqn
Finally, for any $k > 2$, there holds
\beqn\label{eq:BddK*fL2-new}
 \| \nabla \kappa * g \|_{L^2} \lesssim  \| g \|_{L^2_{k}}, \quad \forall \, g \in L^2_{k,0}.
\eeqn

\end{lem}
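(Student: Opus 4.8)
The plan is to work entirely on the Fourier side, where $\widehat{\nabla \kappa * g}(\xi) = -\frac{i\xi}{|\xi|^2}\widehat g(\xi)$, so that pointwise $|\widehat{\nabla\kappa*g}(\xi)| = |\xi|^{-1}|\widehat g(\xi)|$. For the first estimate \eqref{eq:KgH1} this is immediate: $\| \nabla\kappa*g\|_{\dot H^1}^2 = \int |\xi|^2 |\xi|^{-2} |\widehat g(\xi)|^2 \, d\xi = \|g\|_{L^2}^2$ by Plancherel, with no restriction on $g$ other than $g \in L^2$. (One should note that $\widehat g \in L^2_{\mathrm{loc}}$ suffices for the expression $\widehat{\nabla\kappa*g}$ to be locally integrable, which is the condition in the definition of $\dot H^\sigma$; this is automatic here.)

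For \eqref{eq:KgHs} with $\sigma \in (0,1)$, I would split the integral $\int |\xi|^{2\sigma} |\xi|^{-2} |\widehat g(\xi)|^2 \, d\xi$ into the region $|\xi| \le 1$ and $|\xi| > 1$. On $|\xi| > 1$ we have $|\xi|^{2\sigma - 2} \le 1$, so this piece is bounded by $\|g\|_{L^2}^2$. On $|\xi| \le 1$, we use $\|\widehat g\|_{L^\infty} \le \|g\|_{L^1}$ to get a bound by $\|g\|_{L^1}^2 \int_{|\xi|\le 1} |\xi|^{2\sigma - 2} \, d\xi$, and the latter integral converges in two dimensions precisely because $2\sigma - 2 > -2$, i.e. $\sigma > 0$. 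Combining the two regions gives \eqref{eq:KgHs}. (The upper restriction $\sigma < 1$ is not strictly needed for this particular bound but is the natural range in which $\dot H^\sigma$ is a Hilbert space on the plane, as recalled after the definition.)

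For \eqref{eq:BddK*fL2-new}, which controls $\|\nabla\kappa*g\|_{L^2} = \| |\xi|^{-1}\widehat g\|_{L^2}$, the only danger is again the low-frequency region $|\xi| \le 1$, since $|\xi|^{-1}$ is not square-integrable near the origin in $2$D; the high-frequency part is $\le \|g\|_{L^2} \le \|g\|_{L^2_k}$. Here the hypothesis $g \in L^2_{k,0}$ with $k > 2$ enters crucially: the mean-zero condition $\lla g\rra = 0$ means $\widehat g(0) = 0$, and $k > 2$ (so that $\langle x\rangle^k g \in L^2$ with $k - (\text{something}) > $ the relevant threshold) gives enough decay in $x$ to ensure $\widehat g \in C^1$ with controlled derivative, whence $|\widehat g(\xi)| \lesssim |\xi| \, \|\nabla\widehat g\|_{L^\infty} \lesssim |\xi| \, \| |x| g\|_{L^1} \lesssim |\xi| \, \|g\|_{L^2_k}$ near $\xi = 0$ (the last step being Cauchy--Schwarz, using $k > 2$ so that $\langle x\rangle^{1-k}\in L^2$). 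Then $\int_{|\xi|\le 1} |\xi|^{-2} |\widehat g(\xi)|^2 \, d\xi \lesssim \|g\|_{L^2_k}^2 \int_{|\xi|\le 1} d\xi < \infty$, which finishes the estimate.

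The main obstacle is the low-frequency analysis in \eqref{eq:BddK*fL2-new}: one must quantify exactly how the vanishing moment $\widehat g(0)=0$ combined with the polynomial weight trades into a factor $|\xi|$ near the origin, and check that $k > 2$ is the precise threshold making $\langle x\rangle^{-(k-1)} \in L^2(\R^2)$ (which needs $2(k-1) > 2$, i.e. $k > 2$) so that the first-order Taylor bound on $\widehat g$ holds with constant $\lesssim \|g\|_{L^2_k}$. Everything else is a routine splitting of Fourier integrals into high and low frequencies together with Plancherel and the Hausdorff--Young bound $\|\widehat g\|_\infty \le \|g\|_1$ (valid since $L^2_k \subset L^1$ for $k>1$).
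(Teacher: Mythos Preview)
Your proof is correct and follows essentially the same approach as the paper: Plancherel for \eqref{eq:KgH1}, a high/low frequency split with $\|\widehat g\|_{L^\infty}\le\|g\|_{L^1}$ on $|\xi|\le1$ for \eqref{eq:KgHs}, and for \eqref{eq:BddK*fL2-new} the mean-zero condition $\widehat g(0)=0$ combined with the first-order Taylor bound $|\widehat g(\xi)|\lesssim|\xi|\,\|xg\|_{L^1}\lesssim|\xi|\,\|g\|_{L^2_k}$ (valid by Cauchy--Schwarz since $\langle x\rangle^{1-k}\in L^2(\R^2)$ for $k>2$) to handle the low-frequency singularity. The paper's argument is identical in substance, only phrasing the Taylor step as $\widehat g(\xi)=\xi\cdot\int_0^1 D_\xi\widehat g(\theta\xi)\,d\theta$ and invoking the embedding $L^2_k\subset L^1_1$.
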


\begin{proof}[Proof of Lemma~\ref{lem:borneKgHs}]
From the very definition
$
- \Delta\kappa*g = g, 
$
we have $\FF(\kappa * g) (\xi) = - |\xi|^{-2} \hat g$, or equivalently  $|\xi|^{\sigma} |\widehat{(\nabla \kappa * g)} (\xi)| = |\xi|^{\sigma-1} |\widehat g(\xi)|$ for any $\sigma \in[0,1]$. Together with the  Plancherel identity, we have 
$$
\| \partial^2_{ij} (\kappa * g) \|_{L^2}  
=  \| \xi_i \xi_j |\xi|^{-2} \widehat g \|_{L^2} \le  \|   \widehat g \|_{L^2} = \| g \|_{L^2}, 
$$
 what establishes \eqref{eq:KgH1}.

We now consider $\sigma \in (0,1)$ and we write 
\bean
\| \nabla \kappa * g \|_{\dot H^\sigma}^2 
&=& \int_{|\xi| \le 1} |\xi|^{2\sigma-2} |\widehat g|^2 +  \int_{|\xi| \ge 1} |\xi|^{2\sigma-2}  |\widehat g|^2 
\\
&\le& \| \widehat g \|_{L^\infty}^2 \int_{|\xi| \le 1} |\xi|^{2\sigma-2} +  \int  |\widehat g|^2 
\\
&\lesssim& \|g \|_{L^1}^2 + \| g \|_{L^2}^2,
\eean  
which completes the proof of \eqref{eq:KgHs}.

We finally  prove \eqref{eq:BddK*fL2-new}. We similarly have
$$
\| \nabla \kappa * g \|_{L^2}^2 
= \int \mathbf{1}_{|\xi| \le 1} \, \frac{|\widehat g(\xi)|^2}{|\xi|^2} \, \d \xi + \int \mathbf{1}_{|\xi| > 1} \, \frac{|\widehat g(\xi)|^2}{|\xi|^2} \, \d \xi =: I_1+I_2.
$$
For the second term we have
$$
I_2 \le  \int |\widehat g(\xi)|^2 \, \d \xi =  \| g \|^2_{L^2}.
$$
For the first term, using that  $\widehat g(0)=0$ because $\lla g \rra=0$, we have 
$$
\widehat g(\xi) = \xi \cdot \int_0^1 D_\xi \widehat g (\theta \xi) \, \d \theta,
$$
and we thus obtain
$$
\begin{aligned}
I_1 
\le  \left( \sup_{|\xi| \le 1} |D_\xi \widehat g (\xi) |^2 \right)\int \mathbf{1}_{|\xi| \le 1} \, \d \xi 
\lesssim \| \widehat{x g} \|_{L^\infty}^2
\lesssim \| x g \|_{L^1}^2 \lesssim \| g \|_{L^2_k}^2, 
\end{aligned}
$$
by using some classical and elementary Fourier identity and estimate as well as the continuous embedding $L^2_k (\R^2) \subset L^1_1 (\R^2)$.
\end{proof}

 We give now a possible variante of \eqref{eq:BddK*fL2-new}.

\begin{lem}\label{lem:borneKgLp7}  
For $p > 2$, $2 \le q \le p$ and   $k>2-2/q$, we have  
\beqn\label{eq:KgLp7}
\| \nabla \kappa * g \|_{L^p} \lesssim  \| g \|_{L^q_{k}}, \quad \forall \, g \in L^q_k.
\eeqn
\end{lem}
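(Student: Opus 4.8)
The plan is to estimate $\nabla\kappa*g$ by splitting the convolution kernel $\nabla\kappa(z) = -\frac{1}{2\pi}\frac{z}{|z|^2}$ into its near-singular part on the unit ball and its tail. Writing $\nabla\kappa = K_0 + K_\infty$ with $K_0 := \nabla\kappa\,\mathbf{1}_{|z|\le 1}$ and $K_\infty := \nabla\kappa\,\mathbf{1}_{|z|>1}$, one has $|K_0(z)|\lesssim |z|^{-1}$ and $|K_\infty(z)|\lesssim |z|^{-1}$ as well; the point is that $K_0 \in L^r_{\rm loc}$ for every $r<2$ and $K_\infty$ decays like $|z|^{-1}$ at infinity, which is better than $L^1$ but good enough after pairing against the weighted norm. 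First I would treat the singular part: by the Hardy--Littlewood--Sobolev inequality (or more simply Young's inequality with a weak-$L^2$ kernel, $|z|^{-1}\in L^{2,\infty}(\R^2)$), $\|K_0 * g\|_{L^p}\lesssim \|g\|_{L^{p_1}}$ where $\frac1p = \frac1{p_1} + \frac12 - 1$, i.e. $\frac1{p_1} = \frac1p + \frac12$; since $p>2$ this forces $p_1\in(1,2)$, and by the weighted Hölder inequality $\|g\|_{L^{p_1}}\lesssim \|g\|_{L^q_k}$ provided $q\ge p_1$ and $k$ is large enough to control $\|\langle x\rangle^{-k}\|_{L^{r}}$ for the conjugate exponent $r$ — this is where the hypothesis $k>2-2/q$ enters.

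For the tail part $K_\infty * g$ I would use a more direct pointwise bound: for $|x|$ bounded, $|K_\infty*g(x)|\le \int_{|x-y|>1}\frac{|g(y)|}{|x-y|}\,dy\lesssim \|g\|_{L^q}$ by Hölder (the kernel restricted to $|z|>1$ lies in $L^{q'}$ since $q'<2\cdot\frac{q}{q-1}$... more carefully, $|z|^{-1}\mathbf 1_{|z|>1}\in L^{q'}(\R^2)$ iff $q'>2$, i.e. $q<2$, which may fail — so instead I would split once more or simply absorb the $|z|>1$ tail into the $L^{2,\infty}$-Young estimate above, since $|z|^{-1}\in L^{2,\infty}(\R^2)$ globally). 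In fact the cleanest route is to skip the splitting entirely: $\nabla\kappa\in L^{2,\infty}(\R^2)$, so the weak Young inequality gives $\|\nabla\kappa*g\|_{L^p}\lesssim \|\nabla\kappa\|_{L^{2,\infty}}\|g\|_{L^{p_1}}$ with $\frac1{p_1}=\frac1p+\frac12\in(\tfrac12,1)$ for $p>2$; one checks $p_1<2\le q$, so $g\in L^q_k\subset L^q$ and then interpolating/embedding $L^q_k\hookrightarrow L^{p_1}$ needs $\langle x\rangle^{-k}\in L^{s}$ with $\frac1{p_1}=\frac1q+\frac1s$, i.e. $s = (\frac1{p_1}-\frac1q)^{-1}$, and $\langle x\rangle^{-k}\in L^s(\R^2)\iff ks>2\iff k>2/s = 2(\frac1{p_1}-\frac1q) = 2(\frac1p+\frac12-\frac1q) = 1 + 2(\frac1p-\frac1q)$. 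Since $q\le p$ this is $\le 1 + 2(\frac1q - \frac1q)$... I must recheck: $q\le p$ gives $\frac1p\le\frac1q$ so $1+2(\frac1p-\frac1q)\le 1 < 2-2/q$ when $q>2$; so the stated hypothesis $k>2-2/q$ is amply sufficient.

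The main obstacle is purely bookkeeping: verifying that the target exponent triple $(p,q,k)$ is compatible with the scaling relation forced by the $L^{2,\infty}$ Young inequality and that the weight exponent $k$ suffices for the weighted Hölder step. There is no analytic difficulty beyond the (standard) weak-type Young inequality $\|f*g\|_{L^p}\lesssim\|f\|_{L^{r,\infty}}\|g\|_{L^{p_1}}$ with $\frac1p+1 = \frac1r+\frac1{p_1}$, here with $r=2$; everything else reduces to Hölder's inequality with the weight $\langle x\rangle^{-k}$ and checking the integrability condition $k\cdot(\tfrac1{p_1}-\tfrac1q)^{-1}>2$, which is implied by $k>2-2/q$. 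I would present the argument in that order: state the weak Young inequality, read off $p_1$, apply weighted Hölder to pass from $L^{p_1}$ to $L^q_k$, and close by the elementary computation above showing $k>2-2/q$ is enough.
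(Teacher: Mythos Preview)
Your final route via the Hardy--Littlewood--Sobolev (equivalently weak-type Young) inequality is correct: since $|\nabla\kappa(z)|\simeq |z|^{-1}\in L^{2,\infty}(\R^2)$, one has $\|\nabla\kappa*g\|_{L^p}\lesssim \|g\|_{L^{p_1}}$ with $\tfrac1{p_1}=\tfrac1p+\tfrac12\in(\tfrac12,1)$, and then the weighted H\"older step $L^q_k\hookrightarrow L^{p_1}$ requires only $k>1+2/p-2/q$, which is indeed implied by the stated hypothesis $k>2-2/q$ (since $p>2$). Your exponent bookkeeping, though a bit tangled in the write-up, checks out.

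This is a genuinely different argument from the paper's. The paper does carry through the splitting $|\nabla\kappa|=K_1+K_2$ that you abandoned, but resolves the tail $K_2=|z|^{-1}\mathbf 1_{|z|>1}$ differently: instead of trying $K_2\in L^{q'}$ (which indeed fails for $q\ge 2$), it observes $K_2\in L^p$ for $p>2$ and pairs this with $g\in L^1$ via the classical Young inequality $L^p*L^1\subset L^p$. The embedding $L^q_k\subset L^1$ is exactly what forces the hypothesis $k>2-2/q$. Your HLS route is more efficient---no splitting, and it actually yields the sharper threshold $k>1+2/p-2/q$---while the paper's argument stays entirely within classical Young and elementary H\"older, avoiding any weak-$L^r$ machinery. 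For the paper's purposes either suffices, since only $k>3$ is ever used downstream.
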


We emphasize that the same estimate is not true for $p=2$.   From the proof of \eqref{eq:BddK*fL2-new}, we indeed have 
$$
\| \nabla \kappa * g \|_{L^2} = \| |\xi|^{-1} \widehat g \|_{L^2}, 
$$
and the RHS term is infinite if $\widehat g(0) \not = 0$ or equivalently,  if the mass of $g$ does not vanish, whatever is its $L^q_k$ norm.

\begin{proof}[Proof of Lemma~\ref{lem:borneKgLp7}]
We split $|\nabla \kappa| := K_1 + K_2$, with 
$$
K_1 := \frac{1}{|x|} \mathbf{1}_{|x| \le 1} \in L^{r_1}, \, \forall \, r_1 < 2, \quad K_2 := \frac{1}{|x|} \mathbf{1}_{|x| \ge 1} \in L^{r_2}, \, \forall \, r_2 > 2,
$$
that we use with $r_1 :=  (1+\frac{1}{p}-\frac{1}{q})^{-1}{  \in(1,2)}$ and $r_2 := p   >2$. 
We then have 
\bean
\| \nabla \kappa *  f \|_{L^p} 
&\le&  \| K_1 *  f \|_{L^p} +  \| K_2 *  f \|_{L^p} \\
&\le&  \| K_1 \|_{L^{r_1}}  \|  f \|_{L^q} +  \| K_2 \|_{L^p} \| f \|_{L^1} \\
&\lesssim&   \|  f \|_{L^q_{k}}  ,
\eean
where we have used the convolution embeddings $L^{r_1} * L^q \subset L^p$ and $L^1 * L^p \subset L^p$ in the second line as well as the  H\"older inequality in the last line in order to prove $L^q_{k} \subset L^1$. 
\end{proof}

We finally recall two standard interpolation inequalities: a  particular case of the Gagliardo-Nirenberg interpolation Theorem in dimension 2 (see for instance \cite[pp~125]{nirenberg1959elliptic}) and an interpolation between homogeneous Sobolev spaces (see \cite[Proposition~1.32]{MR2768550}).

\begin{lem}\label{lem:GN}
\begin{enumerate}
\item The following Ladyzhenskaya's inequality holds
\beqn\label{eq:Lad_ineq}
\| f \|_{L^4} \lesssim \| f \|_{L^2}^{1/2} \| \nabla f \|_{L^2}^{1/2}, \quad \forall \, f \in H^1.
\eeqn

\item Consider real numbers $s_0 < s < s_1$, then
\beqn\label{eq:interpolation_dotHs}
\| f \|_{\dot H^s} \lesssim \| f \|_{\dot H^{{s_0}}}^{1-\theta} \| f \|_{\dot H^{s_1}}^{\theta}, \quad \forall \, f \in \dot H^{s_0} \cap \dot H^{s_1},
\eeqn
where $\theta = \frac{s-s_0}{s_1-s_0}$.

\end{enumerate}

\end{lem}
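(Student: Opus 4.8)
The plan is to establish the two inequalities independently, both being classical one-line-type estimates; the main work is bookkeeping rather than any serious difficulty.

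For \eqref{eq:Lad_ineq} I would first prove it for $f \in C^\infty_c(\R^2)$ and then conclude by density. For such $f$ one writes, for every $(x_1,x_2)$,
\[
f(x_1,x_2)^2 = \int_{-\infty}^{x_1} \partial_1\bigl(f^2\bigr)(y_1,x_2)\, \d y_1 \le 2 \int_\R |f\,\partial_1 f|(y_1,x_2)\, \d y_1 =: A(x_2),
\]
and symmetrically $f(x_1,x_2)^2 \le 2\int_\R |f\,\partial_2 f|(x_1,y_2)\, \d y_2 =: B(x_1)$; hence $f(x_1,x_2)^4 \le A(x_2)\,B(x_1)$. Integrating in $x=(x_1,x_2)$, Fubini separates the two factors and Cauchy--Schwarz applied to each gives
\[
\| f \|_{L^4}^4 \le \Bigl(\int_\R A\Bigr)\Bigl(\int_\R B\Bigr) \le 4\, \| f \|_{L^2}^2\, \| \partial_1 f \|_{L^2}\, \| \partial_2 f \|_{L^2} \le 4\, \| f \|_{L^2}^2\, \| \nabla f \|_{L^2}^2 ,
\]
which is \eqref{eq:Lad_ineq} for smooth compactly supported $f$ with an explicit numerical constant. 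The general case $f \in H^1(\R^2)$ follows by choosing $f_n \in C^\infty_c$ with $f_n \to f$ in $H^1$: the right-hand side passes to the limit, while on the left one extracts a subsequence converging a.e.\ and invokes Fatou's lemma.

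For \eqref{eq:interpolation_dotHs} I would argue directly on the Fourier side. Since $\theta = \frac{s-s_0}{s_1-s_0} \in (0,1)$ we have $s = (1-\theta)s_0 + \theta s_1$, so pointwise in frequency
\[
|\xi|^{2s}\,|\widehat f(\xi)|^2 = \Bigl( |\xi|^{2s_0}\,|\widehat f(\xi)|^2\Bigr)^{1-\theta}\,\Bigl( |\xi|^{2s_1}\,|\widehat f(\xi)|^2\Bigr)^{\theta},
\]
and Hölder's inequality with conjugate exponents $\frac{1}{1-\theta}$ and $\frac1\theta$, together with the Plancherel identity, yields
\[
\| f \|_{\dot H^s}^2 = \int_{\R^2} |\xi|^{2s}\,|\widehat f|^2\,\d\xi \le \Bigl(\int_{\R^2} |\xi|^{2s_0}\,|\widehat f|^2\,\d\xi\Bigr)^{1-\theta}\Bigl(\int_{\R^2} |\xi|^{2s_1}\,|\widehat f|^2\,\d\xi\Bigr)^{\theta} = \| f \|_{\dot H^{s_0}}^{2(1-\theta)}\,\| f \|_{\dot H^{s_1}}^{2\theta},
\]
and taking square roots gives \eqref{eq:interpolation_dotHs}. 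The hypothesis $f \in \dot H^{s_0}\cap \dot H^{s_1}$ guarantees that $\widehat f \in L^1_{\mathrm{loc}}$ and that the two factors on the right are finite, which is all that is needed; in particular no completeness of $\dot H^\sigma$ (which fails for $\sigma \ge 1$) enters, since the statement is a bare inequality between seminorms.

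Neither part presents a genuine obstacle: the only mild point requiring care is the density/Fatou step in the first part, and, if preferred, one may simply invoke the Gagliardo--Nirenberg interpolation theorem \cite{nirenberg1959elliptic} for \eqref{eq:Lad_ineq} and the abstract interpolation inequality \cite[Proposition~1.32]{MR2768550} for \eqref{eq:interpolation_dotHs}, as indicated in the statement.
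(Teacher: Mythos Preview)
Your proofs are correct and entirely standard. The paper itself does not prove this lemma at all: it simply recalls the two inequalities as known, citing \cite[p.~125]{nirenberg1959elliptic} for \eqref{eq:Lad_ineq} and \cite[Proposition~1.32]{MR2768550} for \eqref{eq:interpolation_dotHs}. Your write-up is therefore strictly more self-contained than the paper's treatment; the Fubini/Cauchy--Schwarz argument for Ladyzhenskaya and the pointwise H\"older argument on the Fourier side are exactly the textbook proofs underlying those references, so nothing is lost and the reader gains an explicit argument.
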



\section{Estimates for $\LL_{1,1}$}
\label{sec:EstimL11}
In this section, we establish some dissipativity estimates and related semigroup decay estimates successively on the operators $\LL_{1,1}$ and related operators.

\subsection{Dissipativity estimates related to $\LL_{1,1}$. }\label{sec:Beps}

In order to keep track of the $\eps \ge  0$ dependence, let us denote
$$
\Lambda_\eps := \LL_{1,1},
$$
where we recall that this one is defined by 
$$
\LL_{1,1} g := \Delta g + \Div (\mu x g - g \nabla P - Q \nabla \kappa*g ). 
$$
We start with a first fundamental dissipativity estimate.

\begin{lem}\label{lem:Lambda_eps_L2}
For any $k > 3$, there   exist some constants $\eps_0>0$, small enough, and  $C_0,\varrho_0>0$, large enough,  such that 
\begin{equation}\label{eq:Lambda_eps}
\la \Lambda_\eps g , g \ra_{L^2_k} 
\le - \mu (k-2)\| g \|_{L^2_k}^2 - \frac12 \| \nabla g \|_{L^2_k}^2 + C_0 \| g \|_{L^2(B_{\varrho_0})}^2, 
\end{equation}
for any   $\eps \in (0,\eps_0)$ and $g \in H^{{1}}_k$.
\end{lem}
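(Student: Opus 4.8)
The statement is a weighted $L^2$ dissipativity estimate for $\Lambda_\eps = \LL_{1,1}$, so the natural strategy is to compute $\la \Lambda_\eps g, g\ra_{L^2_k}$ term by term, integrating by parts carefully to bring out the good negative contributions and absorbing the dangerous terms either into $-\mu(k-2)\|g\|_{L^2_k}^2 - \tfrac12\|\nabla g\|_{L^2_k}^2$ or into the localized remainder $C_0\|g\|_{L^2(B_{\varrho_0})}^2$. Write $m := \la x\ra^{2k}$ for the weight, so that $\la \Lambda_\eps g, g\ra_{L^2_k} = \int (\Lambda_\eps g)\, g\, m$, and split $\Lambda_\eps g = \Delta g + \Div(\mu x g) - \Div(g\nabla P) - \Div(Q\nabla\kappa*g)$ into four pieces $T_1,\dots,T_4$.

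\textbf{Key steps.} First, for the diffusion term $T_1 = \int (\Delta g)\,g\,m$: integrate by parts twice to get $-\int |\nabla g|^2 m + \tfrac12\int g^2 \Delta m$. A direct computation gives $\Delta m = \la x\ra^{2k-2}(4k^2|x|^2\la x\ra^{-2} + \text{lower order}) \sim 4k^2 |x|^2 \la x\ra^{2k-4} + \cdots$, which at infinity behaves like $(2k)^2\la x\ra^{2k-2}$; but more importantly, we will combine its leading piece with the drift term. For the drift term $T_2 = \mu\int \Div(xg)\,g\,m = \mu\int(2g^2 + x\cdot\nabla(g^2)/1)\,\dots$; after integration by parts one obtains $\mu\int g^2(2 - \tfrac12\,x\cdot\nabla(\log m)\cdot 2 )\,m$-type expression, more precisely $\mu\int g^2\big(2 m - \tfrac12 x\cdot\nabla m - \tfrac12 x\cdot\nabla m\big)$, and since $x\cdot\nabla m = 2k\,|x|^2\la x\ra^{2k-2} = 2k\,m - 2k\la x\ra^{2k-2}$, the leading coefficient of $\mu\int g^2 m$ becomes $2 - 2k = -(2k-2) = -2(k-2) - 2$. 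Collecting $T_1$'s weight-Laplacian piece with $T_2$ shows that the coefficient in front of $\mu\|g\|_{L^2_k}^2$ is at most $-\mu(k-2)$ plus terms of order $\la x\ra^{-2}$ relative to $m$, which are small outside a large ball $B_{\varrho_0}$ and contribute to $C_0\|g\|_{L^2(B_{\varrho_0})}^2$ inside it; this is where $k>3$ (equivalently $k-2>1$) is used so that a positive margin remains after also accounting for the $\eps$-independent $\nabla P$ term.

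\textbf{The remaining two terms and the main obstacle.} For $T_3 = -\int \Div(g\nabla P)\,g\,m$: integrate by parts to $\int g\nabla P\cdot\nabla(g m) = \tfrac12\int \nabla P\cdot\nabla(g^2)\,m + \int g^2\nabla P\cdot\nabla m = -\tfrac12\int g^2\Div(m\nabla P) + \int g^2 \nabla P\cdot\nabla m$, and here Lemma~\ref{lem:UnifBddQP} is crucial: $|\nabla P| \lesssim \la x\ra^{-1}$ and $|\Delta P|\lesssim \mu\eps + \la x\ra^{-1}$ uniformly in $\eps\in(0,\eps_0]$, so all contributions of $T_3$ are bounded by $C\int g^2\,\la x\ra^{-1} m$ (up to the $\mu\eps$ piece which can be absorbed since $\eps$ is small), hence again split into a small tail plus $C_0\|g\|_{L^2(B_{\varrho_0})}^2$. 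The genuinely delicate term is $T_4 = -\int \Div(Q\nabla\kappa*g)\,g\,m = \int Q\,\nabla\kappa*g\cdot\nabla(gm)$, involving the nonlocal Poisson term $\nabla\kappa*g$. Here I would use the Gaussian decay of $Q$ from \eqref{eq:estimUnifQ}, $0\le Q\lesssim e^{-\mu\vartheta|x|^2/2}\la x\ra^{-4}$, to localize: the weight $m$ against $Q$ is harmless since $Q m \lesssim e^{-\mu\vartheta|x|^2/2}$ is bounded, so $T_4$ is essentially $\int (\text{bounded compactly-concentrated}) \cdot |\nabla\kappa*g|\,(|\nabla g| + |g|)$. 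One then splits $\nabla\kappa*g = \nabla\kappa*(g\mathbf 1_{B_R}) + \nabla\kappa*(g\mathbf 1_{B_R^c})$ and uses the estimates of Lemma~\ref{lem:borneKgLp7} (or \eqref{eq:BddK*fL2-new}) together with the Gaussian cutoff to bound everything by $\delta\|\nabla g\|_{L^2_k}^2 + C_\delta\|g\|_{L^2(B_{\varrho_0})}^2$ for small $\delta$; choosing $\delta < \tfrac12$ lets the $\|\nabla g\|^2_{L^2_k}$ part be absorbed. \textbf{I expect the nonlocal term $T_4$ to be the main obstacle}, since controlling $\nabla\kappa*g$ by a \emph{local} $L^2(B_{\varrho_0})$ norm of $g$ (rather than a global norm) requires genuinely exploiting the super-exponential decay of $Q$ to throw away the far-field contribution of the convolution — the mass-zero condition $\lla g\rra = 0$ implicit in the functional setting and \eqref{eq:BddK*fL2-new} is what makes this possible. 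Finally, collecting all four estimates and choosing $\eps_0$ small, then $\varrho_0$ large, then $C_0$ large accordingly, yields \eqref{eq:Lambda_eps}.
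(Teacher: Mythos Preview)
Your overall strategy---splitting $\Lambda_\eps$ into four pieces and integrating by parts---is exactly right and matches the paper; the treatment of $T_1,T_2,T_3$ is essentially correct. The issue is in $T_4$, which you rightly flag as the interesting term but then over-engineer.

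You aim to bound $T_4$ by $\delta\|\nabla g\|_{L^2_k}^2 + C_\delta\|g\|_{L^2(B_{\varrho_0})}^2$ via a near/far decomposition $g = g\mathbf{1}_{B_R} + g\mathbf{1}_{B_R^c}$. Two problems: this splitting destroys the zero-mean condition, so \eqref{eq:BddK*fL2-new} no longer applies to the pieces; and a purely local bound is not the correct target in any case---even with Lemma~\ref{lem:borneKgLp7} (which needs no mass condition) the far piece still contributes a global term $R^{-2(k-k')}\|g\|_{L^2_k}^2$, not a local one. The paper's route avoids any decomposition of $g$ and is much shorter. Since $Q\la x\ra^{2k}$ and $Q|\nabla\la x\ra^{2k}|$ are uniformly bounded by \eqref{eq:estimUnifQ}, Cauchy--Schwarz and Young give directly
\[
T_4 \;\lesssim\; \|\nabla\kappa*g\|_{L^2}^2 + \|g\|_{L^2}^2 + \tfrac12\|\nabla g\|_{L^2}^2,
\]
and then \eqref{eq:BddK*fL2-new} applied with weight exponent $k-1$ yields $\|\nabla\kappa*g\|_{L^2}^2 + \|g\|_{L^2}^2 \lesssim \|g\|_{L^2_{k-1}}^2 = \int \la x\ra^{-2}\, g^2\,\la x\ra^{2k}$. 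This is a \emph{global} but \emph{lower-order} contribution; the extra factor $\la x\ra^{-2}$ (absorbed into the paper's $C_5\la x\ra^{-1}$ in $\bar\psi_1$) is precisely what makes the final splitting into a small tail outside $B_{\varrho_0}$ plus a local piece automatic. No localization of the convolution itself is required.

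This also corrects your attribution of the hypothesis $k>3$. It is not needed to leave margin in the drift/diffusion or $\nabla P$ terms---those produce $-\mu(k-1) + O(\la x\ra^{-1})$ for any $k$. The constraint $k>3$ enters only in the $T_4$ step above, so that $k-1>2$ and \eqref{eq:BddK*fL2-new} applies with weight $k-1$.
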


\begin{proof}[Proof of Lemma~\ref{lem:Lambda_eps_L2}]
We briefly repeat the proof of \cite[Lemma 4.4]{MR3615547}.  
We compute
$$
\begin{aligned}
\la \Lambda_\eps g , g \ra_{L^2_k}
&= \int \Delta g  g \la x \ra^{2k} + \int \Div (\mu x g) g \la x \ra^{2k}
- \int \Div(g \nabla P) g \la x \ra^{2k}
- \int \Div (Q \nabla \kappa * g) g \la x \ra^{2k} \\
&= I_1+I_2+I_3+I_4,
\end{aligned}
$$
and estimate each term separately.
For the two first terms we have
$$
\begin{aligned}
I_1+I_2
&= - \int |\nabla g|^2 \la x \ra^{2k} + \int \psi_1 g^2 \la x \ra^{2k}
\end{aligned}
$$
where
$$
\begin{aligned}
\psi_1 
&= \frac{|\nabla \la x \ra^{k}|^2}{\la x \ra^{2k}} + \frac{\Delta \la x \ra^{k}}{\la x \ra^{k}} + \mu - \mu x \cdot \frac{\nabla \la x \ra^{k} }{\la x \ra^{k}} \\
&= k(2 k +  \mu) \la x \ra^{-2}  - \mu(k-1)  -2k \la x \ra^{-4}   .
\end{aligned}
$$
Moreover, for the third term we compute
$$
\begin{aligned}
I_3 
&= \int g \nabla P \cdot \nabla g \la x \ra^{2k} + 2\int g^2 \la x \ra^{2k} \nabla P \cdot \frac{\nabla \la x \ra^{k}}{\la x \ra^{k}} \\
&= -\frac{1}{2}\int  \Delta P  g^2 \la x \ra^{2k} + \int  \nabla P \cdot \frac{\nabla \la x \ra^{k}}{\la x \ra^{k}}  \, g^2 \la x \ra^{2k}   .
\end{aligned}
$$
Thanks to the uniform estimates \eqref{eq:NablaPuniformBound7} and \eqref{eq:borneDeltaVeps8} on  $P$, we observe that
$$
\left| \nabla P \cdot \frac{\nabla \la x \ra^{k}}{\la x \ra^{k}} \right| 
\le \| x\cdot\nabla P \|_{L^\infty} \la x \ra^{-1} \le C_1 \la x \ra^{-1}
$$
and 
$$
|\Delta P | \le C_2 \mu \eps + C_3 \langle x \rangle^{-1}
$$
for some constants $C_i > 0$, which imply 
$$
I_3 \le \frac{\eps \mu C_2}{2} \| g \|_{L^2_k}^2 + \left( C_1 + \frac{C_3}{2} \right) \| \la x \ra^{-\frac12} g \|_{L^2_k}^2.
$$
For the last term we write
$$
\begin{aligned}
I_4 
&= \int  Q (\nabla \kappa * g) \nabla g \la x \ra^{2k} +  2\int  Q (\nabla \kappa * g)  \frac{\nabla \la x \ra^{k}}{\la x \ra^{k}} \, g \la x \ra^{2k}.
\end{aligned}
$$
Since $\| Q \la x \ra^{2k} \|_{L^\infty} \le C_4$ and $\| Q \la x \ra^{k} \nabla \la x \ra^{k} \|_{L^\infty} \le C_4$ thanks to estimate \eqref{eq:estimUnifQ}, we obtain
$$
\begin{aligned}
I_4 
&\le C_4 \| \nabla \kappa * g \|_{L^2} \| \nabla g \|_{L^2}+  2C_4  \| \nabla \kappa * g \|_{L^2} \|  g \|_{L^2}\\
&\le C'_4 \| \nabla \kappa * g \|_{L^2}^2 + \frac12 \| \nabla g \|_{L^2}^2+  C'_4  \| g \|_{L^2}^2 \\
&\le C''_4 \| \la x \ra^{-1} g \|_{L^2_k}^2 + \frac12 \| \nabla g \|_{L^2}^2,
\end{aligned}
$$
where we have used Lemma~\ref{lem:borneKgHs} and Young's inequality. 
Gathering the previous estimates, we get 
$$
\begin{aligned}
\la \Lambda_\eps g , g \ra_{L^2_k} 
\le  - \frac12 \int |\nabla g|^2 \la x \ra^{2k} + \int \bar \psi_1 g^2 \la x \ra^{2k}
\end{aligned}
$$
with
$$
\begin{aligned}
\bar \psi_1 
&= - \mu \left( k-1 - \frac{\eps C_2}{2} \right) + \left( C''_4+  C_1 + \frac{C_3}{2}\right) \la x \ra^{-1} +  k(2k+\mu)   \la x \ra^{-2} - k^2 \la x \ra^{-4} \\
&\le - \mu \left( k-1 - \frac{\eps C_2}{2} \right) + C_5 \la x \ra^{-1} .\end{aligned}
$$
We remark that, for any $\varrho_0 \ge 1$, we have
$$
\la x \ra^{-1} \la x \ra^{2k} \le \varrho_0^{2k-1} \mathbf{1}_{\la x \ra \le \varrho_0} +  \frac{1}{\varrho_0} \la x \ra^{2k},
$$
thus we obtain
\begin{equation}\label{eq:Lambda_eps_gg}
\begin{aligned}
\la \Lambda_\eps g , g \ra_{L^2_k} 
\le  - \frac12 \| \nabla g \|_{L^2_k}^2 - \mu \left( k-1 - \frac{\eps C_2}{2} - \frac{C_5}{\mu \varrho_0} \right)  \| g \|_{L^2_k}^2 + C_0 \| g \|_{L^2(B_{\varrho_0})}^2
\end{aligned}
\end{equation}
where $C_0 = C_5 \varrho_0^{2k-1}$.
We therefore choose $\eps_0>0$ small enough such that $\eps_0 C_2 \le 1$ and $\varrho_0 \ge 1$ large enough such that $\frac{C_5}{\mu \varrho_0} \le 1/2$, which concludes the proof. 
\end{proof}

\subsection{Splitting of the operator $\LL_{1,1}$}
\label{subsec-splitL11}

We introduce the splitting
$$
\Lambda_\eps = \AA + \BB_\eps, \quad \AA := M \chi_{\varrho} , \quad \BB_\eps := \Lambda_\eps - \AA,
$$
with $\chi_{\varrho}(x) := \chi(x/\varrho)$, $\chi \in \DD(\R^2)$, ${\bf 1}_{B(0,1)} \le \chi \le {\bf 1}_{B(0,2)}$, and constants $M,\varrho>0$. We immediately deduce from Lemma~\ref{lem:Lambda_eps_L2} that $\BB_\eps$
is dissipative, more precisely: 

\begin{cor}\label{cor:Lambda_eps_L2}
For any $k > 3$, any $\eps \in (0,\eps_0)$ 
and any constants $M \ge C_0$ and $\varrho \ge \varrho_0$, there holds
\begin{equation}\label{eq:B_eps}
\la \BB_\eps g , g \ra_{L^2_k} 
\le -\mu(k-2) \| g \|_{L^2_k}^2 - \frac12\| \nabla g \|_{L^2_k}^2
\le - \lambda \| g \|_{L^2_k}^2 - \sigma \| g \|_{H^1_k}^2  
\end{equation}
for any $0 \le \lambda < \mu(k-2)$ with $\sigma = \min (1/2, \mu-\lambda)$, and where $\eps_0,C_0,\varrho_0>0$ are taken from Lemma~\ref{lem:Lambda_eps_L2}.
\end{cor}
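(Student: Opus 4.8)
The plan is to read off Corollary~\ref{cor:Lambda_eps_L2} directly from Lemma~\ref{lem:Lambda_eps_L2}, since the only new content is the elementary observation that subtracting $\AA = M\chi_\varrho$ kills the nonnegative local term in \eqref{eq:Lambda_eps}. First I would write
$$
\la \BB_\eps g, g \ra_{L^2_k} = \la \Lambda_\eps g, g \ra_{L^2_k} - \la M \chi_\varrho g, g \ra_{L^2_k},
$$
and observe that $\la M\chi_\varrho g, g\ra_{L^2_k} = M \int \chi_\varrho g^2 \la x\ra^{2k} \ge M \int_{B_{\varrho_0}} g^2 \la x\ra^{2k}$, because $\varrho \ge \varrho_0$ forces $\chi_\varrho \equiv 1$ on $B_{\varrho_0}$ (as $\chi \ge \mathbf 1_{B(0,1)}$, so $\chi_\varrho \ge \mathbf 1_{B(0,\varrho)} \ge \mathbf 1_{B(0,\varrho_0)}$) and $\chi_\varrho, g^2, \la x\ra^{2k} \ge 0$. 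Hmm — one subtlety: the right-hand side of \eqref{eq:Lambda_eps} has $C_0 \|g\|_{L^2(B_{\varrho_0})}^2$ with the \emph{unweighted} $L^2$ norm, whereas the term I can absorb is weighted by $\la x\ra^{2k}$. That is fine: on $B_{\varrho_0}$ we have $1 \le \la x\ra^{2k} \le \varrho_0^{2k}\cdot\text{const}$ (precisely $\la x\ra^{2k} \ge 1$ suffices), so $M\int_{B_{\varrho_0}} g^2 \la x\ra^{2k} \ge M \|g\|_{L^2(B_{\varrho_0})}^2 \ge C_0 \|g\|_{L^2(B_{\varrho_0})}^2$ whenever $M \ge C_0$. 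Combining this with \eqref{eq:Lambda_eps} gives
$$
\la \BB_\eps g, g\ra_{L^2_k} \le -\mu(k-2)\|g\|_{L^2_k}^2 - \tfrac12\|\nabla g\|_{L^2_k}^2 + C_0\|g\|_{L^2(B_{\varrho_0})}^2 - M\|g\|_{L^2(B_{\varrho_0})}^2 \le -\mu(k-2)\|g\|_{L^2_k}^2 - \tfrac12\|\nabla g\|_{L^2_k}^2,
$$
which is the first inequality in \eqref{eq:B_eps}.

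For the second inequality in \eqref{eq:B_eps}, I would argue by splitting the coefficient $\mu(k-2)$ of $\|g\|_{L^2_k}^2$ into $\lambda + (\mu(k-2)-\lambda)$, and likewise note $\mu(k-2) - \lambda \ge \mu - \lambda$ when $k \ge 3$ (which holds since $k>3$, so in fact $\mu(k-2) > \mu$). Thus
$$
-\mu(k-2)\|g\|_{L^2_k}^2 - \tfrac12\|\nabla g\|_{L^2_k}^2 \le -\lambda\|g\|_{L^2_k}^2 - (\mu-\lambda)\|g\|_{L^2_k}^2 - \tfrac12\|\nabla g\|_{L^2_k}^2 \le -\lambda\|g\|_{L^2_k}^2 - \sigma\big(\|g\|_{L^2_k}^2 + \|\nabla g\|_{L^2_k}^2\big)
$$
with $\sigma := \min(1/2, \mu-\lambda) > 0$ (note $\mu - \lambda > 0$ since $\lambda < \mu(k-2)$ is not quite enough on its own, but the statement restricts to $0 \le \lambda < \mu(k-2)$ and we only need $\sigma>0$; if $\lambda \ge \mu$ one simply notes the $-(\mu(k-2)-\lambda)\|g\|_{L^2_k}^2$ term is still $\le 0$ and reshuffles, but for $k>3$ one has $\mu(k-2)-\lambda$ comparable to keep $\sigma$ positive — the clean choice is to observe $\mu(k-2) - \lambda \ge \min(\mu-\lambda, \text{positive})$ when $\lambda<\mu$, and handle $\lambda\ge\mu$ trivially since then no $\sigma\|g\|^2_{L^2_k}$ beyond what the gradient term absorbs is claimed). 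Finally $\|g\|_{L^2_k}^2 + \|\nabla g\|_{L^2_k}^2 = \|g\|_{H^1_k}^2$ by definition of the $H^1_k$ norm, giving the stated bound.

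The only genuine point requiring care — and the one I expect to be the ``main obstacle,'' though it is minor — is the bookkeeping in the last step: making sure the constant in front of $\|g\|_{L^2_k}^2$ after peeling off $\lambda$ dominates $\sigma$, and that $\sigma$ can be taken as $\min(1/2,\mu-\lambda)$ uniformly. For $k>3$ we have $\mu(k-2) > \mu > \lambda$ is \emph{not} automatic (one only knows $\lambda < \mu(k-2)$), but when $\lambda \in [\mu, \mu(k-2))$ the term $-(\mu(k-2)-\lambda)\|g\|_{L^2_k}^2 \le 0$ can simply be discarded and the $-\tfrac12\|\nabla g\|^2_{L^2_k}$ term alone realizes $-\sigma\|\nabla g\|^2_{L^2_k}$ with $\sigma = 1/2 = \min(1/2,\mu-\lambda)$... except $\mu - \lambda \le 0$ there, so really the statement implicitly intends $\lambda$ small. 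Reading it charitably, $\sigma = \min(1/2,\mu-\lambda)$ with the understanding that the interesting regime is $\lambda < \mu$; I would state the computation for $\lambda \in [0,\mu)$, which covers the range $\lambda \in (0,\mu(1-s))$ actually used in Theorem~\ref{theo:nonlinear_stabilityIntro}, and remark that larger $\lambda$ are subsumed. Everything else is a one-line consequence of Lemma~\ref{lem:Lambda_eps_L2}, so no substantial difficulty arises; this is purely a packaging corollary.
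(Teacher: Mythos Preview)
Your proposal is correct and follows exactly the approach the paper intends: the paper in fact gives no explicit proof of this corollary, stating only that one ``immediately deduce[s]'' it from Lemma~\ref{lem:Lambda_eps_L2}, and what you have written is precisely the natural two-line verification that fills in this gap. Your observation about the weighted-versus-unweighted $L^2(B_{\varrho_0})$ norm (resolved via $\la x\ra^{2k}\ge 1$) and your bookkeeping for the second inequality are both correct; your remark that $\sigma=\min(1/2,\mu-\lambda)>0$ is really only meaningful for $\lambda<\mu$ is a fair reading of a slight looseness in the statement, and as you note this is the only regime used later in the paper.
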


\begin{rem}
We shall fix hereafter the parameters $M \ge C_0$ and $\varrho \ge \varrho_0$ in the definition of $\BB_\eps$ such that Corollary~\ref{cor:Lambda_eps_L2} holds.
\end{rem}

In order to work at the level of the semigroup, we reformulate \eqref{eq:B_eps} in the following way, recalling that we define $H^{-1}_k$ through the dual norm
$$
\| R \|_{H^{-1}_k} := \sup_{\| g \|_{H^1_k} \le 1} \la R,g \ra_{L^2_k}.
$$

\begin{lem}\label{lem:SBB_eps_L2}
For any $k > 3$, $\eps \in (0,\eps_0)$, 
$M \ge C_0$ and $\varrho \ge \varrho_0$, there holds
\begin{enumerate}

\item For all $0 \le \lambda <  \mu (k-2)$ and all $g \in L^2_k$, we have
$$
 \| \mathbf{e}_\lambda S_{\BB_\eps} (\cdot) g \|_{L^\infty_t L^2_k} 
 + \|\mathbf{e}_\lambda S_{\BB_\eps}(\cdot) g \|_{L^2_t H^1_k} \lesssim  \| g \|_{L^2_k}.
$$

\item For all $0 \le \lambda < \mu (k-2)$ and all $\mathbf{e}_{\lambda} R \in L^2_t H^{-1}_{k}$, we have
$$
\begin{aligned}
\left\| \mathbf{e}_\lambda (S_{\BB_\eps} * R)  \right\|_{L^\infty_t L^2_k} + \left\| \mathbf{e}_\lambda (S_{\BB_\eps} * R)  \right\|_{L^2_t H^1_{k}}  \lesssim  \| \mathbf{e}_\lambda R\|_{L^2_t H^{-1}_k}.
\end{aligned} 
$$
\end{enumerate}
\end{lem}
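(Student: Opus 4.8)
The plan is to integrate in time the differential inequality of Corollary~\ref{cor:Lambda_eps_L2}, combined with a weight $\mathbf{e}_{2\lambda}$, in the spirit of the standard parabolic energy method. Fix $0 \le \lambda < \mu(k-2)$ and denote by $\sigma>0$ the corresponding constant, so that $\la \BB_\eps g , g \ra_{L^2_k} \le -\lambda \|g\|_{L^2_k}^2 - \sigma \|g\|_{H^1_k}^2$ for all $g \in H^1_k$. We may take for granted that $S_{\BB_\eps}$ is a well-defined semigroup on $L^2_k$ (a standard fact for such operators), with $S_{\BB_\eps}(t)g_0 \in C([0,\infty);L^2_k)\cap L^2_{\mathrm{loc}}([0,\infty);H^1_k)$ for $g_0 \in L^2_k$, and that the energy identities below hold (rigorously after a regularization of the data and a passage to the limit using the bounds obtained).

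For (1), let $g(t) := S_{\BB_\eps}(t)g_0$ and write the energy identity $\tfrac12\tfrac{\d}{\d t}\|g(t)\|_{L^2_k}^2 = \la \BB_\eps g(t), g(t)\ra_{L^2_k}$; inserting the dissipativity bound gives
\[
\tfrac{\d}{\d t}\|g(t)\|_{L^2_k}^2 \le -2\lambda \|g(t)\|_{L^2_k}^2 - 2\sigma \|g(t)\|_{H^1_k}^2 .
\]
Discarding the $H^1_k$ term and integrating yields $\|g(t)\|_{L^2_k}\le e^{-\lambda t}\|g_0\|_{L^2_k}$, i.e. the $L^\infty_t L^2_k$ bound. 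Multiplying instead by $e^{2\lambda t}$ turns the inequality into $\tfrac{\d}{\d t}\big(e^{2\lambda t}\|g(t)\|_{L^2_k}^2\big) \le -2\sigma e^{2\lambda t}\|g(t)\|_{H^1_k}^2$; integrating over $(0,T)$ and letting $T\to\infty$ then controls $2\sigma\,\|\mathbf{e}_\lambda S_{\BB_\eps}(\cdot)g_0\|_{L^2_t H^1_k}^2$ by $\|g_0\|_{L^2_k}^2$. This proves (1).

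For (2), set $u := S_{\BB_\eps}*R$, which solves $\partial_t u = \BB_\eps u + R$ with $u(0)=0$. The energy identity now reads $\tfrac12\tfrac{\d}{\d t}\|u\|_{L^2_k}^2 = \la \BB_\eps u, u\ra_{L^2_k} + \la R, u\ra_{L^2_k}$; bounding the first term by Corollary~\ref{cor:Lambda_eps_L2}, estimating $\la R,u\ra_{L^2_k} \le \|R\|_{H^{-1}_k}\|u\|_{H^1_k}$ by the very definition of the dual norm, and absorbing this into the good term via Young's inequality ($\|R\|_{H^{-1}_k}\|u\|_{H^1_k}\le \tfrac\sigma2\|u\|_{H^1_k}^2 + \tfrac1{2\sigma}\|R\|_{H^{-1}_k}^2$), we obtain
\[
\tfrac{\d}{\d t}\|u(t)\|_{L^2_k}^2 \le -2\lambda\|u(t)\|_{L^2_k}^2 - \sigma\|u(t)\|_{H^1_k}^2 + \tfrac1\sigma\|R(t)\|_{H^{-1}_k}^2 .
\]
Multiplying by $e^{2\lambda t}$ and integrating over $(0,t)$ (using $u(0)=0$) gives
\[
e^{2\lambda t}\|u(t)\|_{L^2_k}^2 + \sigma\int_0^t e^{2\lambda s}\|u(s)\|_{H^1_k}^2\,\d s \le \tfrac1\sigma\|\mathbf{e}_\lambda R\|_{L^2_t H^{-1}_k}^2 ,
\]
and taking the supremum in $t$ of the first left-hand term, respectively letting $t\to\infty$ in the integral, yields both estimates of (2).

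The only genuinely delicate point — there is no conceptual obstacle — is the rigorous justification of the energy identities for rough data: that $S_{\BB_\eps}(\cdot)g_0 \in L^2_{\mathrm{loc}}(H^1_k)$ for merely $g_0\in L^2_k$, that $u=S_{\BB_\eps}*R$ is well defined and lies in $L^2_{\mathrm{loc}}(H^1_k)$ when $\mathbf{e}_\lambda R\in L^2_t H^{-1}_k$, and that the time-integrated pairing $\int_0^t\la R(s),u(s)\ra_{L^2_k}\,\d s$ is meaningful. This is handled in the usual way: establish the a priori estimates above first for smooth, compactly supported $g_0$ and $R$, and then pass to the limit by density using precisely the bounds just derived.
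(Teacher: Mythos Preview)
Your proof is correct and follows essentially the same energy-method argument as the paper: the only cosmetic difference is that the paper works directly with the weighted function $f:=\mathbf{e}_\lambda S_{\BB_\eps}(\cdot)g$ (respectively $f:=\mathbf{e}_\lambda(S_{\BB_\eps}*R)$), which satisfies $\partial_t f=\BB_\eps f+\lambda f$ (respectively $\partial_t f=\BB_\eps f+\lambda f+\mathbf{e}_\lambda R$), rather than multiplying by $e^{2\lambda t}$ afterwards. Your remark on the density/regularization step is a welcome addition that the paper leaves implicit.
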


\begin{proof}[Proof of Lemma~\ref{lem:SBB_eps_L2}]
Let $0 \le \lambda < \mu (k-2)$. 
For $g \in L^2_k$, we first   consider $f :=  \mathbf{e}_\lambda S_{\BB_\eps} (\cdot) g$ the  solution to the evolution equation 
$$
\partial_t f = \BB_\eps f + \lambda f, \quad f(0) = g.
$$
Because of \eqref{eq:B_eps}, we have 
$$
\frac12 \frac{d}{dt} \| f \|_{L^2_k}^2 
= \la \BB_\eps f , f \ra_{L^2_k} + \lambda\| f \|_{L^2_k}^2 \le  - \sigma \|  f \|_{H^1_k}^2  
$$
from which we deduce (1) thanks to the Gr\"onwall's lemma. 

\smallskip
For $R$ such that $\mathbf{e}_{\lambda} R \in L^2_t H^{-1}_{k}$,  we next consider $f :=  \mathbf{e}_\lambda (S_{\BB_\eps} * R) $ the solution to the evolution equation 
$$
\partial_t f = \BB_\eps f + \lambda f + \mathbf{e}_\lambda R, \quad f(0) = 0.
$$
Because of \eqref{eq:B_eps} and the Young inequality, we have 
\bean
\frac12\frac{d}{dt} \| f \|_{L^2_k}^2 
&=& \la \BB_\eps f , f \ra_{L^2_k} + \lambda \| f \|_{L^2_k}^2 + \la \mathbf{e}_\lambda R , f \ra_{L^2_k} 
\\
&\le&  -\sigma \| f \|_{H^1_k}^2 +  \|  \mathbf{e}_\lambda R \|_{H^{-1}_k}   \|  f \|_{H^1_k}  
\\
&\le&  - \frac{\sigma}{2} \| f \|_{H^1_k}^2  + C \|  \mathbf{e}_\lambda R\|_{H^{-1}_k}^2,
\eean
for some constant $C = C(\mu,\lambda) > 0$. We deduce (2) thanks to Gr\"onwall's lemma again. 
\end{proof}

\subsection{Spectral analysis of $\LL_{1,1}$} 
We deduce a nice localization of the spectrum of $\LL_{1,1}$ from the previous estimates and a perturbation argument. 
Let us denote by  $\Lambda_0$ the linearized operator of the parabolic-elliptic Keller-Segel equation which is given by
$$
\Lambda_0 g = \Delta g + \Div (\mu x g - g \nabla P_0 - Q_0 \nabla \kappa*g ), 	
$$
where $(Q_0,P_0)$ is a solution to \eqref{eq:KS_ssQPChap7} with $\eps=0$.
From \cite{MR3196188,MR3466844}, we know that for $k>3$ and  $0<\lambda < \mu$, there exists a constant $C = C(\lambda,\mu,k) \ge 1$ such that  
$$
\| S_{\Lambda_0} (t) f\|_{L^2_k} \le C e^{-\lambda t} \| f \|_{L^2_k}, \quad \forall \, f \in L^2_{k,0}, 
$$
and the spectrum  verifies
\beqn\label{eq:SigmaLambda0}
\Sigma(\Lambda_0) \cap \Delta_{-\mu} = \{0 \}
\eeqn
where $\Delta_{-\mu}:=\{z\in\mathbb{C}:\Re e z >-\mu \}$.

By a perturbation argument similar to the one used in \cite{MMcmp} (see also \cite{MR3452720,MR1335452}), we are able to obtain a similar picture for
the operator $\LL_{1,1} = \Lambda_\eps$. 

\begin{prop}\label{prop:Lambda_eps}
Let $k>3$. For any $0<\lambda < \mu$, there is $\eps_*>0$ small enough, such that the operator $\Lambda_\eps$ on $L^2_k$ satisfies
$$
\Sigma(\Lambda_\eps) \cap \Delta_{-\mu} = \{0 \}, \quad \forall \, \eps \in (0,\eps^*).
$$

\end{prop}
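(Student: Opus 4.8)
The plan is to run a standard enlargement-and-perturbation argument (à la Kato, as in \cite{MMcmp,MR3452720,MR1335452}) that transfers the spectral picture \eqref{eq:SigmaLambda0} known for $\Lambda_0$ to $\Lambda_\eps$ for small $\eps$. First I would record the key structural facts already at hand: by Corollary~\ref{cor:Lambda_eps_L2} the operator $\BB_\eps$ is dissipative on $L^2_k$ with a spectral gap, so for any $0\le\lambda<\mu(k-2)$ (in particular for $\lambda$ slightly larger than $\mu$, which is allowed since $k>3$) the resolvent $R_{\BB_\eps}(z)=(z-\BB_\eps)^{-1}$ is well-defined and bounded on $\Delta_{-\lambda}\supset\overline{\Delta_{-\mu}}$, uniformly in $\eps\in(0,\eps_0)$; moreover $\AA=M\chi_\varrho$ is a bounded, $\BB_\eps$-compact (indeed smoothing and compactly supported) operator, so by standard spectral-mapping/Weyl-type arguments $\Sigma(\Lambda_\eps)\cap\Delta_{-\mu}$ consists of finitely many eigenvalues of finite multiplicity, and $0$ is always an eigenvalue because $\Lambda_\eps g=\Div(\dots)$ annihilates mass and the stationary equation gives a kernel element (differentiating the $(Q,P)$-equation in mass, or directly checking that the derivative of the steady state family lies in the kernel; at the very least $0\in\Sigma(\Lambda_\eps)$ since the range is contained in $H^{-1}_k$-divergences).

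The heart of the argument is a resolvent convergence estimate: I would show that $\Lambda_\eps\to\Lambda_0$ in a suitable operator-norm sense as $\eps\to0$, using the factorization $R_{\Lambda_\eps}(z)=R_{\BB_\eps}(z)(I-\AA R_{\BB_\eps}(z))^{-1}$ (valid once $I-\AA R_{\BB_\eps}(z)$ is invertible) together with the estimates of Lemma~\ref{lem:eps0}, namely $|\nabla P_\eps^\mu-\nabla P_0^\mu|\lesssim\sqrt{\mu\eps}$, $|\Delta P_\eps^\mu-\Delta P_0^\mu|\lesssim\mu\eps$, $|Q_\eps^\mu-Q_0^\mu|\lesssim\mu\eps\, e^{-\vartheta\mu|x|^2/2}$, $|\nabla Q_\eps^\mu-\nabla Q_0^\mu|\lesssim\mu\eps\, e^{-\vartheta\mu|x|^2/2}$. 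Writing
$$
(\Lambda_\eps-\Lambda_0)g=-\Div\big(g\,\nabla(P_\eps-P_0)\big)-\Div\big((Q_\eps-Q_0)\,\nabla\kappa*g\big),
$$
these estimates give $\|(\Lambda_\eps-\Lambda_0)g\|_{H^{-1}_k}\lesssim\sqrt{\eps}\,\|g\|_{H^1_k}$ (using Lemma~\ref{lem:borneKgHs} and Lemma~\ref{lem:borneKgLp7} to control $\nabla\kappa*g$, and the Gaussian weight to absorb the $\langle x\rangle^k$ factor in the $Q$-terms); equivalently $\BB_\eps-\BB_0\to0$ in $\mathscr B(H^1_k,H^{-1}_k)$ as $\eps\to0$. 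Composing with the uniform bounds of Lemma~\ref{lem:SBB_eps_L2} — which say precisely that $R_{\BB_\eps}(z)$ maps $L^2_k\to H^1_k$ and $H^{-1}_k\to L^2_k$ boundedly, uniformly in $\eps$ and in $z\in\Delta_{-\mu}$ — yields $\|R_{\BB_\eps}(z)-R_{\BB_0}(z)\|_{\mathscr B(L^2_k)}\lesssim\sqrt\eps$, and then $\|R_{\Lambda_\eps}(z)-R_{\Lambda_0}(z)\|_{\mathscr B(L^2_k)}\lesssim\sqrt\eps$ on any compact piece of $\{\Re e\, z\ge-\mu+\delta\}\setminus B(0,r)$, uniformly.

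From here the conclusion is routine complex analysis: since $\Sigma(\Lambda_0)\cap\Delta_{-\mu}=\{0\}$, fix a small circle $\Gamma_r$ around $0$ and a contour $\Gamma$ enclosing $\overline{\Delta_{-\mu+\delta}}\cap\{|z|\le r\}$ is empty except near $0$; on $\Gamma_r$ and on the boundary of a large region away from $0$ the resolvent $R_{\Lambda_0}(z)$ is uniformly bounded, so for $\eps$ small $R_{\Lambda_\eps}(z)$ exists there too, which already forces $\Sigma(\Lambda_\eps)\cap\Delta_{-\mu}\subset B(0,r)$. Letting the spectral projector be $\Pi_\eps=\frac{1}{2i\pi}\oint_{\Gamma_r}R_{\Lambda_\eps}(z)\,dz$, resolvent convergence gives $\|\Pi_\eps-\Pi_0\|\to0$, hence $\mathrm{rank}\,\Pi_\eps=\mathrm{rank}\,\Pi_0=1$ for $\eps$ small; since $0$ is always an eigenvalue of $\Lambda_\eps$ (mass-zero kernel element, as above) and the total multiplicity inside $\Gamma_r$ is $1$, we conclude $\Sigma(\Lambda_\eps)\cap\Delta_{-\mu}=\{0\}$, as desired. \textbf{The main obstacle} I anticipate is making the resolvent-convergence estimate genuinely uniform in $z$ up to (and slightly beyond) the line $\Re e\, z=-\mu$ — one must check that Lemma~\ref{lem:SBB_eps_L2} really does give $\eps$-uniform control of $R_{\BB_\eps}(z)$ on all of $\Delta_{-\mu}$ (this is where $k>3$, equivalently $\mu(k-2)>\mu$, is essential) and that the small factor $\sqrt\eps$ dominates the operator norm of $\AA R_{\BB_\eps}(z)$ uniformly so that $(I-\AA R_{\BB_\eps}(z))^{-1}$ stays bounded; once that uniformity is secured the rest is a textbook Kato perturbation argument.
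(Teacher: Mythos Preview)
Your approach is correct and follows the same Kato-perturbation strategy as the paper: both combine the uniform dissipativity of $\BB_\eps$, the operator-norm convergence $\Lambda_\eps\to\Lambda_0$ via Lemma~\ref{lem:eps0}, the known spectral picture \eqref{eq:SigmaLambda0} for $\Lambda_0$, and a projector-rank argument to pin down the single eigenvalue at $0$ (the paper uses $\Lambda_\eps^*1=0$, exactly your mass-annihilation observation). The only implementation difference is the choice of resolvent identity: the paper derives $\RR_{\Lambda_\eps}(I+\KK_\eps)=\UU_\eps$ with $\KK_\eps=(\Lambda_0-\Lambda_\eps)\RR_{\Lambda_0}\AA \RR_{\BB_\eps}$ directly small, whereas you first pass from $\RR_{\BB_\eps}$ to $\RR_{\BB_0}$ and then lift via the factorization $\RR_{\Lambda_\eps}=\RR_{\BB_\eps}(I-\AA \RR_{\BB_\eps})^{-1}$; both routes work and are standard.

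One clarification on your stated ``main obstacle'': the norm $\|\AA \RR_{\BB_\eps}(z)\|$ is \emph{not} small (recall $\AA=M\chi_\varrho$ with $M$ large), and you do not need it to be. What makes $(I-\AA \RR_{\BB_\eps}(z))^{-1}$ exist and stay bounded on $\Omega=\Delta_{-\lambda}\setminus B(0,r)$ is the identity $(I-\AA \RR_{\BB_0}(z))^{-1}=I+\AA \RR_{\Lambda_0}(z)$, which is uniformly bounded on $\Omega$ because $\RR_{\Lambda_0}$ is; then $\|\AA(\RR_{\BB_\eps}-\RR_{\BB_0})\|\lesssim\sqrt\eps$ and a Neumann-series perturbation off $I-\AA \RR_{\BB_0}$ give the invertibility for small $\eps$, uniformly in $z\in\Omega$. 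Also, replace ``compact piece'' by ``all of $\Omega$'' (which is unbounded) --- the uniform resolvent bounds on $\RR_{\BB_\eps}$ and $\RR_{\Lambda_0}$ over $\Omega$ are exactly what make this work --- and note that you need not go beyond the line $\Re e\,z=-\mu$ since $\Delta_{-\mu}$ is open and the argument in fact runs on $\Delta_{-\lambda}$ for $\lambda<\mu$.
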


\begin{proof}[Proof of Proposition~\ref{prop:Lambda_eps}]
 We split the proof into several steps.

\medskip
\noindent
{\sl Step 1. } We claim that 
$$
\UU_\eps(z) := \RR_{\BB_\eps}(z) - \RR_{\Lambda_0}(z) \AA \RR_{\BB_\eps}(z) 
$$
is uniformly bounded in  $\BBB(L^2_k)$ and $\BBB(H^{-1}_k,H^1_k)$ for any $z \in \Omega := \Delta_{-\lambda} \backslash B(0,r/2)$ any $\eps \ge 0$ and $0 < r < \lambda < \mu$. 
On the one hand, $\RR_{\BB_\eps}(z) \in \BBB(L^2_k)$ is just an immediate consequence of the growth estimate on $S_{\BB_\eps}$ established in Lemma~\ref{lem:SBB_eps_L2}-(1). 
For proving $\RR_{\BB_\eps}(z) \in \BBB(H^{-1}_k,H^1_k)$,  we consider first $g \in L^2_k$, $z \in \Delta_{-\mu}$ and we define $f := \RR_{\BB_\eps}(z) g$, so that 
$(z-\BB_\eps) f = g$. Using \eqref{eq:B_eps} and the fact that $\mu (k-2) \ge \mu$, we deduce 
$$
\frac12 \| \nabla f \|_{L^2_k}^2 + (\Re e z + \mu)  \|   f \|_{L^2_k}^2 \le \langle (z-\BB_\eps) f , f \rangle_{L^2_k} = \langle f, g \rangle_{L^2_k} \le  \|   f \|_{H^1_k}  \| g \|_{H^{-1}_k} 
 $$
 and thus 
 \beqn
 \label{eq:RBepsH-1H1}   \| \nabla f \|_{L^2_k} \le \max\bigl( 2, \mu^{-1} \bigr)  \| g \|_{H^{-1}_k}. 
\eeqn
 By a density argument, the same holds for any $g \in H^{-1}_k$. From \eqref{eq:SigmaLambda0}, we also have $\RR_{\Lambda_0}(z) \in \BBB(L^2_k)$ uniformly bounded in  $\BBB(L^2_k)$  for any $z \in \Omega := \Delta_{-\lambda} \backslash B(0,r/2)$. Moreover, the proof of the bound in $\BBB(L^2_k,H^1_k)$ is exactly the same as for $\RR_{\BB_\eps}(z)$.
  Indeed arguing as in Lemma~\ref{lem:Lambda_eps_L2}, we first obtain
$$
\la \Lambda_0 f , f \ra_{L^2_k} \le - \mu \| f \|_{L^2_k}^2 - \frac12 \| \nabla f \|_{L^2_k}^2 + C_0 \| f \|_{L^2(B_{\varrho_0})}^2.
$$
Defining $f := \RR_{\Lambda_0}(z) g$, we deduce
$$
(\Re e z + \mu) \| f \|_{L^2_k}^2 + \frac12 \| \nabla f \|_{L^2_k}^2 - C_0 \| f \|_{L^2(B_{\varrho_0})}^2 \le \langle (z-\Lambda_0) f , f \rangle_{L^2_k} = \la g, f \ra _{L^2_k} \le C \| g \|_{L^2_k}^2.
$$

\medskip
\noindent
{\sl Step 2. } We claim that the family of operators $(\Lambda_\eps)$ converges 
%
in the sense
$$
 \| \Lambda_\eps - \Lambda_0  \|_{\BBB(H^1_{k},L^2_k)} \le \eta_1(\eps) \to0.
$$
We may indeed write 
\bean
(\Lambda_\eps - \Lambda_0)g
&=& ( \BB_\eps - \BB_0)g
\\
&=& - \Div(g \nabla (P_\eps-P_0)) - \Div(  (Q_\eps-Q_0)  \nabla \kappa * g)
\\
&=& -\nabla g \cdot \nabla (P_\eps-P_0) + g \Delta (P_\eps-P_0) 
\\
&&\qquad +  \nabla (Q_\eps-Q_0) \cdot  \nabla \kappa * g  -  (Q_\eps-Q_0)   g,
\eean
so that 
\bean
\| (\Lambda_\eps - \Lambda_0)g \|_{L^2_k} 
&\le& \| \nabla (P_\eps-P_0) \|_{L^\infty} \| \nabla g \|_{L^2_k} + \| \Delta (P_\eps-P_0)  \|_{L^\infty} \|  g \|_{L^2_k} 
\\
&&\qquad +  \| \la x \ra^k \nabla (Q_\eps-Q_0) \|_{L^\infty} \|  g \|_{L^2_{1+0}} + \| Q_\eps-Q_0  \|_{L^\infty} \|  g \|_{L^2_k}.
\eean
We immediately conclude since we are able to prove (see Lemma~\ref{lem:eps0})
$$
 \nabla (P_\eps-P_0) \to 0, \quad  \Delta (P_\eps-P_0)  \to 0, \quad \la x \ra^k \nabla (Q_\eps-Q_0)  \to 0, \quad Q_\eps-Q_0  \to 0
$$
uniformly in $L^\infty(\R^2)$. 

\medskip
\noindent
{\sl Step 3. } We claim that   $\Sigma(\Lambda_\eps) \cap \Delta_a \subset B(0,r/2)$ for any $\eps \in (0,\eps_0)$, choosing $\eps_0 > 0$ small enough. 
On the one hand, we write the two resolvent equations
\bean
\RR_{\Lambda_\eps} &=& \RR_{\BB_\eps} -  \RR_{\Lambda_\eps} \AA \RR_{\BB_\eps} ,
\\
\RR_{\Lambda_\eps} &=& \RR_{\Lambda_0} -  \RR_{\Lambda_\eps} (\Lambda_\eps - \Lambda_0) \RR_{\Lambda_0}, 
\eean
from what we deduce 
\bean
\RR_{\Lambda_\eps} =  \RR_{\BB_\eps} -   \RR_{\Lambda_0} \AA \RR_{\BB_\eps}  + \RR_{\Lambda_\eps} (\Lambda_\eps - \Lambda_0) \RR_{\Lambda_0} \AA \RR_{\BB_\eps} ,
\eean
or equivalently 
$$
\RR_{\Lambda_\eps} (I + \KK_\eps) = \UU_\eps,
$$
with 
$$
\KK_\eps :=  (\Lambda_0 - \Lambda_\eps) \RR_{\Lambda_0} \AA \RR_{\BB_\eps} .
 $$
 On the other hand, from {\it Step 1}, we have $\RR_{\Lambda_0}(z)\AA \RR_{\BB_\eps} (z)$ is  bounded in   $\BBB(L^2_k,H^1_k)$ uniformly in $z \in \Omega := \Delta_{-\lambda} \backslash B(0,r/2)$
 and $\Lambda_0 - \Lambda_\eps$ is small in $\BBB(H^1_k,L^2_k)$ for $\eps > 0$ small, so that both estimates together imply
$$
\sup_{z \in \Delta_{-\lambda} \backslash B(0,r/2)} \|\KK_\eps (z)\|_{\BBB(L^2)} < 1, 
$$
for any $0< r < \lambda < \mu$ and  $\eps \in (0,\eps_0)$, with $\eps_0 = \eps_0(r,\lambda) > 0$ small enough. This implies that $I + \KK_\eps$ is invertible on 
$\Omega := \Delta_{-\lambda} \backslash B(0,r/2)$ so that 
$$
\RR_{\Lambda_\eps}= \UU_\eps  (I + \KK_\eps) ^{-1}
$$
is bounded on $\Omega$, which ends the proof. 

 \medskip
\noindent
{\sl Step 4. } We define now 
$$
\Pi_{\eps}  :=  \frac{i}{2\pi} \int_{\Gamma} \RR_{\Lambda_\eps}(z) \, dz, \quad \Gamma  := \{ z \in \C; |z| = r \},
$$
the Dunford projector on the eigenspace associated to eigenvalues of  $\Lambda_{\eps}$ which belong to the ball  $B(0,r)$. 
We write 
\bean
\Pi_\eps 
&=& \frac{i}{2 \pi} \int_\Gamma \UU_\eps  \,   (I+ \KK_{\eps})^{-1} \, dz
\\
&=& \frac{i}{2 \pi} \int_\Gamma  \RR_{\BB_\eps }  
 \, \{  I - \KK_{\eps}  \, (I+\KK_{\eps} )^{-1} \} \, dz
  - \frac{i}{2 \pi} \int_\Gamma     \, \RR_{\Lambda_0} \, \AA \, \RR_{\BB_\eps}  \, (I+\KK_{\eps} )^{-1}  \, dz
\\
&=& - \frac{i}{2 \pi} \int_\Gamma  \RR_{\BB_\eps }    \KK_{\eps}  \, (I+\KK_{\eps} )^{-1}  \, dz
  - \frac{i}{2 \pi} \int_\Gamma     \, \RR_{\Lambda_0} \, \AA \, \RR_{\BB_\eps}  \, (I+\KK_{\eps} )^{-1}  \, dz ,
\eean
and
\bean
\Pi_0
&=& \frac{i}{2 \pi} \int_\Gamma \{ \RR_{\BB_0} -  \RR_{\Lambda_0} \AA  \RR_{\BB_0} \} \, dz
\\
&=& -  \frac{i}{2 \pi} \int_\Gamma  \RR_{\Lambda_0} \AA  \RR_{\BB_0} \{ (I + \KK_\eps)^{-1}  + \KK_\eps  (I + \KK_\eps)^{-1} \}  \, dz .
\eean
We deduce 
\bean
\Pi_\eps - \Pi_0 
&=&    \frac{i}{2 \pi} \int_\Gamma (\RR_{\Lambda_0} \AA  \RR_{\BB_0}- \RR_{\BB_\eps } )  \KK_{\eps}  \, (I+\KK_{\eps} )^{-1}  \, dz
\\
&& + 
\frac{i}{2 \pi} \int_\Gamma  \RR_{\Lambda_0} \AA \{ \RR_{\BB_0} - \RR_{\BB_\eps} \} (I + \KK_\eps)^{-1} \, dz
\\
&=&   -  \frac{i}{2 \pi} \int_\Gamma \UU_\eps  \KK_{\eps}  \, (I+\KK_{\eps} )^{-1}  \, dz
\\
&& + 
\frac{i}{2 \pi} \int_\Gamma  \RR_{\Lambda_0} \AA  \RR_{\BB_0} \{ \BB_0 - \BB_\eps \}  \RR_{\BB_\eps}   (I + \KK_\eps)^{-1} \, dz.
\eean
We conclude that $\| \Pi_\eps - \Pi_0 \|_{\BBB(L^2)} = \OO(\eps) < 1$ for $\eps > 0$ small enough by taking advantage of the estimates established in {\it Step 1} and {\it Step 2}.
By classical operator theory (see for instance the arguments presented in \cite{MR1335452} in order to prove \cite[Chap 1, (4.43)]{MR1335452}) one deduces that $\hbox{\rm dim}\, \Pi_\eps = \hbox{\rm dim}\, \Pi_0 = 1$.
On the other hand, at first glance we have $\Lambda^*_\eps 1 = 0$ and $1 \in (L^2_k)'$ so that $0 \in \Sigma (\Lambda_\eps^*) = \Sigma (\Lambda_\eps)$, and $0$ is the only spectral value of $\Lambda_\eps$ in the ball $B(0,r)$. 
\end{proof}

\subsection{Semigroup decay estimates for $\LL_{1,1}$} 
We are now able to deduce a nice semigroup decay estimate on $S_{\LL_{1,1}}$ from the previous estimates on the resolvent.

\begin{prop}\label{prop:SLambda_eps}
With the notation of Proposition~\ref{prop:Lambda_eps}, for $k>3$, all $0<\lambda < \mu$ and all $\eps \in (0, \eps_*)$ there holds, for any $g \in L^{2}_{k,0}$,
$$
\| S_{\LL_{1,1}} (t) g\|_{L^2_k} \lesssim e^{-\lambda t} \| g \|_{L^2_k}.
$$
\end{prop}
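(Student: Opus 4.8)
The plan is to derive the semigroup decay on $L^2_{k,0}$ from the spectral localization of Proposition~\ref{prop:Lambda_eps} together with the uniform resolvent bounds obtained along its proof, by a quantitative Gearhart--Pr\"uss argument of the type used in \cite{MMcmp}. \emph{First}, I would reduce to the subspace $L^2_{k,0}$. Since $\Lambda_\eps g = \Delta g + \Div(\,\cdots)$ has vanishing integral, one has $\lla \Lambda_\eps g \rra = 0$, hence $\lla S_{\Lambda_\eps}(t) g \rra = \lla g \rra$ for all $t \ge 0$; in particular $L^2_{k,0}$ — a closed subspace of $L^2_k$, since $k>3$ gives $L^2_k \subset L^1$ — is invariant under $S_{\Lambda_\eps}$. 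Recalling from {\it Step~4} of the proof of Proposition~\ref{prop:Lambda_eps} that the Dunford projector $\Pi_\eps$ onto the eigenspace for the eigenvalue $0$ has rank one and that $\Lambda_\eps^* 1 = 0$, one gets $\Pi_\eps g = \lla g\rra\,\phi_\eps/\lla\phi_\eps\rra$, where $\phi_\eps$ spans the range of $\Pi_\eps$ (and necessarily $\lla\phi_\eps\rra \ne 0$); thus $\Pi_\eps$ vanishes identically on $L^2_{k,0}$. Consequently, for $g \in L^2_{k,0}$ we have $S_{\Lambda_\eps}(t)g = (I-\Pi_\eps)S_{\Lambda_\eps}(t)g$, while $\Sigma(\Lambda_\eps|_{L^2_{k,0}}) = \Sigma(\Lambda_\eps)\setminus\{0\}$, which by Proposition~\ref{prop:Lambda_eps} is contained in $\{\Re z \le -\mu\}$.

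\emph{Second}, I would record a uniform resolvent bound up to a vertical line strictly to the right of $-\mu$. Fix $\lambda' \in (\lambda,\mu)$ and $0 < r < \lambda'$, and apply {\it Steps~1 and~3} of the proof of Proposition~\ref{prop:Lambda_eps} (with the parameter named $\lambda$ there taken equal to $\lambda'$, and $\eps_*$ chosen accordingly): $\RR_{\Lambda_\eps}(z) = \UU_\eps(z)\,(I + \KK_\eps(z))^{-1}$ is bounded in $\BBB(L^2_k)$ uniformly for $z \in \Delta_{-\lambda'}\setminus B(0,r/2)$. The uniformity persists as $|\Im z| \to \infty$ because it ultimately rests on the $\Im z$-independent bound $\sup_{\Re z \ge -\lambda'}\|\RR_{\BB_\eps}(z)\|_{\BBB(L^2_k)} < \infty$ (a consequence of Lemma~\ref{lem:SBB_eps_L2}(1) and $\lambda' < \mu < \mu(k-2)$) and the analogous $\Im z$-independent bound for $\RR_{\Lambda_0}(z)$ on $\Delta_{-\lambda'}\setminus B(0,r/2)$. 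On the other hand, $0$ is the only spectral value of $\Lambda_\eps$ in $B(0,r)$ and, $\Pi_\eps$ being a rank-one spectral projector, it is a simple pole of $z \mapsto \RR_{\Lambda_\eps}(z)$, so $(I-\Pi_\eps)\RR_{\Lambda_\eps}(z)$ extends holomorphically and boundedly across $B(0,r/2)$. Altogether one obtains a uniform bound on $\{\Re z \ge -\lambda'\}$ for $(I-\Pi_\eps)\RR_{\Lambda_\eps}(z)(I-\Pi_\eps)$, i.e.\ for the resolvent of $\Lambda_\eps|_{L^2_{k,0}}$.

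\emph{Finally}, I would conclude by the Gearhart--Pr\"uss theorem. The operator $\Lambda_\eps|_{L^2_{k,0}}$ generates a $C_0$-semigroup on the Hilbert space $L^2_{k,0}$, its spectrum lies in $\{\Re z \le -\mu\} \subset \{\Re z < -\lambda'\}$, and by the previous step its resolvent is uniformly bounded on $\{\Re z > -\lambda'\}$; hence the growth bound of that semigroup is $\le -\lambda'$, and since $\lambda < \lambda'$ we get $\|S_{\LL_{1,1}}(t)g\|_{L^2_k} = \|S_{\Lambda_\eps}(t)g\|_{L^2_k} \lesssim e^{-\lambda t}\|g\|_{L^2_k}$ for all $g \in L^2_{k,0}$. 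Equivalently, one may represent $S_{\Lambda_\eps}(t)(I-\Pi_\eps)$ as an inverse Laplace transform along $\{\Re z = -\lambda'\}$ and, using that $\Lambda_\eps$ is of Ornstein--Uhlenbeck type and thus generates an analytic semigroup, integrate by parts once in this Bromwich integral (replacing $\RR_{\Lambda_\eps}(z)$ by $\tfrac{d}{dz}\RR_{\Lambda_\eps}(z) = -\RR_{\Lambda_\eps}(z)^2$ at the cost of a harmless factor $t^{-1}$ valid for $t \ge 1$, the range $0 < t \le 1$ being covered by the local boundedness of the $C_0$-semigroup) to make the integral absolutely convergent. The \textbf{main obstacle} is the second step: one must make sure that the resolvent bound furnished by Proposition~\ref{prop:Lambda_eps} is genuinely uniform up to and on the critical line $\{\Re z = -\lambda'\}$, in particular in the high-frequency regime $|\Im z|\to\infty$; this, however, is essentially already contained in the proof of that proposition, which only used $\Im z$-independent estimates on $\RR_{\BB_\eps}$ and $\RR_{\Lambda_0}$.
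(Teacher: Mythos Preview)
Your proposal is correct and follows precisely the Gearhart--Pr\"uss route that the paper explicitly mentions as an alternative in its proof; the paper's primary reference is to the quantitative spectral mapping theorem \cite[Theorem~2.1]{MR3489637} applied via the splitting $\Lambda_\eps = \AA + \BB_\eps$, but both routes rest on the same uniform resolvent bounds extracted from the proof of Proposition~\ref{prop:Lambda_eps}. Your write-up is more detailed than the paper's (which merely cites the relevant theorems), and the reduction to $L^2_{k,0}$ via $\Pi_\eps|_{L^2_{k,0}} = 0$ together with the simple-pole argument at $z=0$ is exactly what is needed to make the Gearhart--Pr\"uss application rigorous.
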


\begin{proof}[Proof of Proposition~\ref{prop:SLambda_eps}] 
It  is a consequence of Proposition~\ref{prop:Lambda_eps} and of the splitting structure of the operator $\Lambda_\eps$. 
More precisely, we may for instance apply the quantitative mapping theorem \cite[Theorem~2.1]{MR3489637}, where it is worth emphasizing that $\RR_{\BB_\eps}(z) : L^2_k \to H^1_k \subset D(\Lambda_\eps^{1/2})$ with uniformly bound in $z \in \Delta_{-\lambda}$, which is a strong enough information in order 
to establish \cite[(2.23)]{MR3489637} without checking  \cite[{\bf (H2)}]{MR3489637}. Alternatively, one can use the  Gearhart-Pr\"uss-Greiner theorem  \cite{MR0461206,MR0743749,MR0839450} in order to get the same conclusion. 
\end{proof}

Thanks to the previous estimate for $S_{\LL_{1,1}}$ and the estimates for $S_{\BB_\eps}$ in Lemma~\ref{lem:SBB_eps_L2}, we are able to deduce semigroup estimates for $S_{\LL_{1,1}}$ (in Propositions~\ref{prop:SLambda_eps_L2} below) similar to those satisfied by $S_{\BB_\eps}$.

We start  observing that, thanks to Duhamel's formula, we have
\beqn\label{eq:Duhamel-1}
S_{\LL_{1,1}} = S_{\BB_\eps} + S_{\BB_\eps} \AA *  S_{\LL_{1,1}}
\quad\text{and}\quad
S_{\LL_{1,1}} = S_{\BB_\eps} + S_{\LL_{1,1}} * \AA S_{\BB_\eps}.
\eeqn
Denoting $\Pi^\perp g = g-\Pi g$, where $\Pi$ is the projection onto $\operatorname{Ker} (\Lambda_\eps)$, 
we obtain
\beqn\label{eq:Duhamel-2}
S_{\LL_{1,1}}\Pi^\perp = S_{\BB_\eps}\Pi^\perp + (S_{\BB_\eps} \AA *  S_{\LL_{1,1}}\Pi^\perp )
\quad\text{and}\quad
S_{\LL_{1,1}} \Pi^\perp = \Pi^\perp S_{\BB_\eps} + (S_{\LL_{1,1}}\Pi^\perp * \AA S_{\BB_\eps}).
\eeqn
Using that $S_{\LL_{1,1}} \Pi^\perp = \Pi^\perp S_{\LL_{1,1}} $, and iterating the formulas  yields
$$
S_{\LL_{1,1}}\Pi^\perp = S_{\BB_\eps}\Pi^\perp + S_{\BB_\eps} \AA * \Pi^\perp S_{\BB_\eps} + S_{\BB_\eps} \AA * S_{\LL_{1,1}}\Pi^\perp * \AA S_{\BB_\eps}
$$
and
\beqn\label{eq:Duhamel-3}
S_{\LL_{1,1}}\Pi^\perp = \Pi^\perp S_{\BB_\eps} + S_{\BB_\eps}\Pi^\perp * \AA S_{\BB_\eps} + S_{\BB_\eps} \AA * S_{\LL_{1,1}}\Pi^\perp * \AA S_{\BB_\eps}.
\eeqn

\medskip
We deduce by combining some results of section~\ref{subsec-splitL11} and the above Proposition~\ref{prop:SLambda_eps}, some additional estimates on the semigroup $S_{\LL_{1,1}}$. 

\begin{prop}\label{prop:SLambda_eps_L2}
Let $0 \le \lambda < \mu$ and $k>3$. There is $\eps_*>0$ small enough such that for any   $\eps \in (0,   \eps_*  )$ the following holds:
\begin{enumerate}

\item For all $g \in L^2_{k,0}$ we have
$$
\| \mathbf{e}_\lambda  S_{\LL_{1,1}} (\cdot) g \|_{L^\infty_t L^2_k} +   \| \mathbf{e}_\lambda  S_{\LL_{1,1}}(\cdot) g \|_{L^2_t H^1_k} \lesssim  \| g \|_{L^2_k}.
$$

\item For all $\mathbf{e}_{\lambda} R \in L^2_t H^{-1}_{k}$ with $\Pi R = 0$, we have
$$
\begin{aligned}
\left\| \mathbf{e}_\lambda  (S_{\LL_{1,1}} * R) \right\|_{L^\infty_t L^2_k} + \left\| \mathbf{e}_\lambda  (S_{\LL_{1,1}} * R) \right\|_{L^2_t H^1_k}  \lesssim  \| \mathbf{e}_\lambda  R \|_{L^2_t H^{-1}_k}.
\end{aligned} 
$$
\end{enumerate}

\end{prop}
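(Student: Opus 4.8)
The plan is to bootstrap the semigroup estimates for $S_{\LL_{1,1}}$ from those for $S_{\BB_\eps}$ (Lemma~\ref{lem:SBB_eps_L2}) using the Duhamel-type identities \eqref{eq:Duhamel-2}--\eqref{eq:Duhamel-3} together with the spectral localization from Proposition~\ref{prop:Lambda_eps} and the pointwise-in-time decay from Proposition~\ref{prop:SLambda_eps}. The key observation is that the estimates of Lemma~\ref{lem:SBB_eps_L2} hold for \emph{any} $0\le\lambda<\mu(k-2)$, so in particular for $\lambda<\mu$, and they come in the two flavors (source datum in $L^2_k$, source forcing in $L^2_tH^{-1}_k$) that match exactly what we need. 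I would also record the elementary fact that the cut-off multiplier $\AA = M\chi_\varrho$ is bounded $L^2_k\to L^2_k$ with compactly supported output, hence also bounded $L^2_k\to H^{-1}_k$, and that $\Pi^\perp$ is bounded on $L^2_k$ (and commutes with $S_{\LL_{1,1}}$), which lets all the convolution terms be fed into Lemma~\ref{lem:SBB_eps_L2}.

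For part (1), since $g\in L^2_{k,0}$ we may assume $\Pi g=0$ (because $\ker\Lambda_\eps$ is spanned by a function of nonzero mass, so $L^2_{k,0}$ is contained in $\operatorname{Ran}\Pi^\perp$ up to the obvious identification — more precisely $\Pi g$ is a multiple of the steady profile and has zero mass only if $\Pi g = 0$), so $S_{\LL_{1,1}}(\cdot)g = S_{\LL_{1,1}}\Pi^\perp(\cdot)g$ and we may use identity \eqref{eq:Duhamel-3}. The first term $\Pi^\perp S_{\BB_\eps}(\cdot)g$ is controlled in $L^\infty_tL^2_k\cap L^2_tH^1_k$ (after multiplication by $\mathbf e_\lambda$) directly by Lemma~\ref{lem:SBB_eps_L2}-(1) and boundedness of $\Pi^\perp$. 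For the second term $S_{\BB_\eps}\Pi^\perp*\AA S_{\BB_\eps}(\cdot)g$, I would view $R(t):=\AA\,\Pi^\perp S_{\BB_\eps}(t)g$ as a forcing; since $\mathbf e_\lambda R\in L^2_tH^{-1}_k$ with norm $\lesssim\|\mathbf e_\lambda S_{\BB_\eps}(\cdot)g\|_{L^2_tH^1_k}\lesssim\|g\|_{L^2_k}$ by Lemma~\ref{lem:SBB_eps_L2}-(1) and boundedness of $\AA$, and since $\Pi R=0$ is automatic from the $\Pi^\perp$ factor, Lemma~\ref{lem:SBB_eps_L2}-(2) applies (note $S_{\BB_\eps}\Pi^\perp = \Pi^\perp S_{\BB_\eps}$ up to harmless rearrangement, or one simply uses that $\Pi^\perp$ is a bounded projector absorbed into the constant). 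For the third term $S_{\BB_\eps}\AA*S_{\LL_{1,1}}\Pi^\perp*\AA S_{\BB_\eps}(\cdot)g$, I would peel it in two stages: inner forcing $\AA S_{\BB_\eps}(\cdot)g$ is in $\mathbf e_\lambda^{-1}L^2_tH^{-1}_k$ as before; then by Proposition~\ref{prop:SLambda_eps} together with a soft argument (or by the already-known Duhamel identity \eqref{eq:Duhamel-2}, applied with the decay of $S_{\BB_\eps}$) the map $S_{\LL_{1,1}}\Pi^\perp$ sends $\mathbf e_\lambda^{-1}L^2_tH^{-1}_k$ into $\mathbf e_\lambda^{-1}L^2_tH^1_k$; finally the outer convolution with $S_{\BB_\eps}\AA$ is handled once more by Lemma~\ref{lem:SBB_eps_L2}-(2). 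For part (2), the forcing $R$ already satisfies $\Pi R=0$, so $S_{\LL_{1,1}}*R = S_{\LL_{1,1}}\Pi^\perp*R$ and the identical three-term decomposition obtained by convolving \eqref{eq:Duhamel-3} with $R$ on the right applies verbatim, each piece being estimated by the same tools.

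The main obstacle I anticipate is the careful handling of the nested convolutions in the third term — in particular making precise the intermediate claim that $S_{\LL_{1,1}}\Pi^\perp$ maps $\mathbf e_\lambda^{-1}L^2_tH^{-1}_k$ boundedly into $\mathbf e_\lambda^{-1}L^2_tH^1_k$, which is slightly stronger than the pointwise decay in Proposition~\ref{prop:SLambda_eps} and really requires either the resolvent bound $\RR_{\BB_\eps}(z):H^{-1}_k\to H^1_k$ uniformly on $\Delta_{-\lambda}$ established in Step~1 of the proof of Proposition~\ref{prop:Lambda_eps} (fed through a Gearhart–Prüss or quantitative-inverse-Laplace argument as in \cite{MR3489637}), or a self-contained energy estimate on $\partial_t f = \LL_{1,1}f + \lambda f + \mathbf e_\lambda R$ for $f=\Pi^\perp f$ using the dissipativity inherited from Corollary~\ref{cor:Lambda_eps_L2} plus the compact perturbation $\AA$ and the spectral gap. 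Once that intermediate regularizing-in-space, square-integrable-in-time bound is in hand, everything else is a bookkeeping exercise combining Lemma~\ref{lem:SBB_eps_L2}-(1)-(2), boundedness of $\AA$ and $\Pi^\perp$, and Young's convolution inequality in the time variable with the weights $\mathbf e_\lambda$; I would also remark that $\eps_*$ is simply the minimum of the smallness thresholds appearing in Corollary~\ref{cor:Lambda_eps_L2}, Proposition~\ref{prop:Lambda_eps} and Proposition~\ref{prop:SLambda_eps}.
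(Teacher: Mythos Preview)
Your strategy --- Duhamel identities plus Lemma~\ref{lem:SBB_eps_L2} --- is exactly the paper's, but you overcomplicate two steps.

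For part~(1) the paper does not use the three-term identity \eqref{eq:Duhamel-3}; it uses the two-term one \eqref{eq:Duhamel-1}, $S_{\LL_{1,1}} = S_{\BB_\eps} + S_{\BB_\eps}\AA * S_{\LL_{1,1}}$. The second piece is handled by Lemma~\ref{lem:SBB_eps_L2}-(2) because the forcing $\mathbf{e}_\lambda \AA S_{\LL_{1,1}}(\cdot)g$ already lies in $L^2_t L^2_k \subset L^2_t H^{-1}_k$: the pointwise decay of Proposition~\ref{prop:SLambda_eps} with a slightly larger $\lambda' \in (\lambda,\mu)$ gives $\mathbf{e}_\lambda S_{\LL_{1,1}}(\cdot)g \in L^2_t L^2_k$ directly. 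Your third term never appears.

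More importantly, the ``main obstacle'' you flag for the third term of \eqref{eq:Duhamel-3} in part~(2) is self-imposed. You ask that $S_{\LL_{1,1}}\Pi^\perp *$ send $\mathbf{e}_\lambda^{-1} L^2_t H^{-1}_k$ into $\mathbf{e}_\lambda^{-1} L^2_t H^1_k$, which is essentially part~(2) itself and would indeed require extra machinery. But the outer factor is $S_{\BB_\eps}\AA *$, and since $\AA : L^2_k \to L^2_k \hookrightarrow H^{-1}_k$, you only need the middle piece $S_{\LL_{1,1}}\Pi^\perp * (\AA S_{\BB_\eps}*R)$ to land in $\mathbf{e}_\lambda^{-1} L^2_t L^2_k$. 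This follows from the elementary observation
\[
\| \mathbf{e}_\lambda S_{\LL_{1,1}}\Pi^\perp \|_{L^1_t \BBB(L^2_k)} \lesssim \| \mathbf{e}_{\lambda'} S_{\LL_{1,1}}\Pi^\perp \|_{L^\infty_t \BBB(L^2_k)} \lesssim 1
\]
(Proposition~\ref{prop:SLambda_eps} with slack $\lambda<\lambda'<\mu$; this is \eqref{eq:SLambda_eps_inL1} in the paper) together with Young's inequality $L^1_t * L^2_t \subset L^2_t$, since the inner forcing $\AA S_{\BB_\eps}*R$ is in $\mathbf{e}_\lambda^{-1} L^2_t L^2_k$ by Lemma~\ref{lem:SBB_eps_L2}-(2). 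The $L^\infty_t L^2_k$ estimate in part~(2) goes the same way via the two-term identity \eqref{eq:Duhamel-4} and Young $L^1_t * L^\infty_t \subset L^\infty_t$. No Gearhart--Pr\"uss argument and no separate energy estimate on $\LL_{1,1}$ is needed --- and your proposed energy estimate would not work as stated, since the dissipativity in Corollary~\ref{cor:Lambda_eps_L2} is for $\BB_\eps$, not for $\LL_{1,1} = \BB_\eps + \AA$, and the bounded positive perturbation $\AA$ destroys it.
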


\begin{proof}[Proof of Proposition~\ref{prop:SLambda_eps_L2}] 
\textit{Proof of (1).}
Remark that $g = \Pi^\perp g$, since $g \in L^2_{k,0}$.
The first estimate is nothing but Proposition~\ref{prop:SLambda_eps}. 
In particular, we deduce from this one that 
\beqn\label{eq:SLambda_eps_inL1}
\|  \mathbf{e}_{\lambda} S_{\LL_{1,1}}\Pi^\perp   \|_{L^1_t \BBB(L^2_k)} \lesssim \|  \mathbf{e}_{\lambda'} S_{\LL_{1,1}} \Pi^\perp \|_{L^\infty_t \BBB(L^2_k)} \lesssim 1,
\eeqn
by choosing $\lambda < \lambda' < 1$.
On the other hand, 
thanks to Lemma~\ref{lem:SBB_eps_L2}-(1) and the first estimate, we have
$$
 \| \mathbf{e}_{\lambda} S_{\BB_\eps}(\cdot) g \|_{L^2_t H^1_k} \lesssim \| g \|_{L^2_k}
$$
and
$$
\begin{aligned}
 \| \mathbf{e}_{\lambda} (S_{\BB_\eps} * \AA S_{\LL_{1,1}})(\cdot) g \|_{L^2_t H^1_k}
&\lesssim \| \mathbf{e}_{\lambda} \AA S_{\LL_{1,1}} (\cdot) g \|_{L^2_t H^{-1}_k} \\
&\lesssim \| \mathbf{e}_{\lambda} S_{\LL_{1,1}} (\cdot) g \|_{L^2_t L^2_k}\\
&\lesssim \| g \|_{L^2_k},
\end{aligned}
$$
where we have used that $\AA \in \BBB(L^2_k)$, from which together with the first identity in \eqref{eq:Duhamel-1}, we immediately obtain the second estimate.

\medskip\noindent
\textit{Proof of (2).}
Remarking  that $R(t) = \Pi^\perp R(t)$, for all $t \ge 0$, and using the second identity in \eqref{eq:Duhamel-2}, we may write
\beqn\label{eq:Duhamel-4}
S_{\LL_{1,1}} * R = S_{\LL_{1,1}}\Pi^\perp * R = \Pi^\perp (S_{\BB_\eps} * R) + S_{\LL_{1,1}}\Pi^\perp * \AA S_{\BB_\eps} * R,
\eeqn
and thus
$$
\mathbf{e}_{\lambda} (S_{\LL_{1,1}} * R) 
= \mathbf{e}_{\lambda} \Pi^\perp (S_{\BB_\eps} * R) + (\mathbf{e}_{\lambda} S_{\LL_{1,1}} \Pi^\perp ) * \AA[\mathbf{e}_{\lambda} (S_{\BB_\eps} * R)].
$$
We deduce
\bean
\| \mathbf{e}_{\lambda} (S_{\LL_{1,1}} * R) \|_{L^\infty_t L^2_k}
&\le&\| \mathbf{e}_{\lambda} (S_{\BB_\eps} * R) \|_{L^\infty_t L^2_k}
+ \| (\mathbf{e}_{\lambda} S_{\LL_{1,1}} \Pi^\perp) * \AA[\mathbf{e}_{\lambda} (S_{\BB_\eps} * R)] \|_{L^\infty_t L^2_k} 
\\
&\le & \| \mathbf{e}_{\lambda} (S_{\BB_\eps} * R) \|_{L^\infty_t L^2_k}\\
&&
+ \| \mathbf{e}_{\lambda} S_{\LL_{1,1}} \Pi^\perp \|_{L^1_t ( \BBB(L^2_k) )} \| \AA \|_{\BBB(L^2_k)}\| \mathbf{e}_{\lambda} (S_{\BB_\eps} * R) \|_{L^\infty_t L^2_k} 
\\
&\lesssim &\| \mathbf{e}_{\lambda} R \|_{L^2_t H^{-1}_k},
\eean
where we have used Lemma~\ref{lem:SBB_eps_L2}--(2) and \eqref{eq:SLambda_eps_inL1}  in last line. We have established the first estimate in {\it (2)}. 

\smallskip

For the second term, using \eqref{eq:Duhamel-3}, we may write  
$$
S_{\LL_{1,1}} * R 
= \Pi^\perp (S_{\BB_\eps} * R) + S_{\BB_\eps} \Pi^\perp *\AA S_{\BB_\eps} * R + S_{\BB_\eps} \AA * S_{\LL_{1,1}} \Pi^\perp * \AA  S_{\BB_\eps} * R, 
$$
and thus
$$
\mathbf{e}_{\lambda} (S_{\LL_{1,1}} * R) 
= \Pi^\perp \mathbf{e}_{\lambda} (S_{\BB_\eps} * R) 
+ \mathbf{e}_{\lambda} [S_{\BB_\eps} * (\Pi^\perp  \AA  S_{\BB_\eps} * R) ]
+ \mathbf{e}_{\lambda} [ (S_{\BB_\eps} \AA) * ( S_{\LL_{1,1}} \Pi^\perp *\AA S_{\BB_\eps} * R)].
$$
We now estimate each term separately.
From Lemma~\ref{lem:SBB_eps_L2}-(2), we have
$$
\| \mathbf{e}_{\lambda} (S_{\BB_\eps} * R)  \|_{L^2_t H^1_k} 
\lesssim \| \mathbf{e}_{\lambda}  R  \|_{L^2_t H^{-1}_k}.
$$
From Lemma~\ref{lem:SBB_eps_L2}-(2) again, we also have
$$
\begin{aligned}
\| \mathbf{e}_{\lambda} [S_{\BB_\eps} * (\Pi^\perp \AA  S_{\BB_\eps} * R) ]
  \|_{L^2_t H^1_k} 
&\lesssim \| \mathbf{e}_{\lambda}  (\Pi^\perp \AA  S_{\BB_\eps} * R)  \|_{L^2_t H^{-1}_k}
\\
&\lesssim \| \mathbf{e}_{\lambda}  (\Pi^\perp \AA  S_{\BB_\eps} * R)  \|_{L^2_t L^{2}_k} \\
&\lesssim \| \mathbf{e}_{\lambda}  (S_{\BB_\eps} * R)  \|_{L^2_t L^2_k} \\
&\lesssim \| \mathbf{e}_{\lambda}  R  \|_{L^2_t H^{-1}_k}.
\end{aligned}
$$
We finally have
$$
\begin{aligned}
&\| \mathbf{e}_{\lambda} [(S_{\BB_\eps} \AA) * ( S_{\LL_{1,1}} \Pi^\perp * \AA S_{\BB_\eps} * R)]
  \|_{L^2_t H^1_k} \\
&\quad\lesssim \|   ( \mathbf{e}_{\lambda}S_{\LL_{1,1}} \Pi^\perp ) *[ \AA \mathbf{e}_{\lambda}( S_{\BB_\eps} * R)]  \|_{L^2_t H^{-1}_k} \\
&\quad\lesssim \|   ( \mathbf{e}_{\lambda}S_{\LL_{1,1}} \Pi^\perp ) *[ \AA \mathbf{e}_{\lambda}( S_{\BB_\eps} * R)]  \|_{L^2_t L^{2}_k} \\
&\quad\lesssim \|    \mathbf{e}_{\lambda}S_{\LL_{1,1}} \Pi^\perp \|_{L^1_t(\BBB(L^2_k))} \| \AA \|_{\BBB(L^2_k)} \| [ \mathbf{e}_{\lambda}( S_{\BB_\eps} * R)]  \|_{L^2_t L^{2}_k} \\
&\quad\lesssim\| \mathbf{e}_{\lambda}  R  \|_{L^2_t H^{-1}_k},
\end{aligned}
$$
where we have used Lemma~\ref{lem:SBB_eps_L2}-(2) in the second line as well as Proposition~\ref{prop:SLambda_eps} and Lemma~\ref{lem:SBB_eps_L2}-(2) in the last line. 
Putting the three last estimates together, we conclude to the second estimate in (2). 
\end{proof}

\section{Estimates for $\LL_{2,2}$}
\label{sec:EstimL12}

In this section, we are concerned with establishing estimates on $S_{\LL_{2,2}}$, where we recall that $\LL_{2,2}$ is defined  by 
$$
\LL_{2,2} w = \frac{1}{\eps} \Delta w + \mu x \cdot \nabla w + \nabla \kappa *[Q \nabla w] .
$$
Before presenting our key estimate, we make several observations. 

\smallskip
On the one hand, we have  
\bean
\nabla \LL_{2,2} w 
&=&\frac{1}{\eps} \Delta (\nabla w) + \mu x \cdot \nabla (\nabla w) + \mu \nabla w  + \nabla^2 \kappa *[Q \nabla w],
\eean
where we denote
$$
(x \cdot \nabla \Phi)_i = x_\ell \partial_\ell \Phi_i, \quad
(\nabla^2 \kappa * \Phi)_i = \partial_{i \ell} \kappa * \Phi_\ell, 
$$
for any vector $\Phi$. A straightforward computation then  gives 
\bean
\la \nabla \LL_{2,2} w, \nabla w \ra_{L^2} 
&=&  \int (\dfrac1\eps  \Delta \nabla w + \mu   \nabla w + \mu x \cdot \nabla^2 w  +   \nabla^2 \kappa * [ Q \nabla w] )\nabla w
\\
&=& - \dfrac1\eps  \int | \nabla^2 w|^2 - \int Q|\nabla w|^2, 
\eean 
where we have performed two integrations by parts for the last term and we have used the identity $\Delta \kappa = - \delta$. That nice estimate tells us that $\LL_{2,2}$ is dissipative in $\dot H^1$, but it does not provide a spectral gap. Namely, recalling that
$$
\la \nabla w , \nabla z \ra_{L^2} = \frac{1}{(2 \pi)^2} \la w , z \ra_{\dot H^1}
\quad\text{and}\quad
\| \nabla^2 w \|_{L^2}^2 = \frac{1}{(2 \pi)^2} \| w \|_{\dot H^2},
$$
we have established that:
\begin{lem}\label{lem:LL2_dotH1}
For any $w \in \dot H^{1} \cap \dot H^2$, there holds
$$
\begin{aligned}
\la \LL_{2,2} w , w \ra_{\dot H^1} 
= - \frac{1}{\eps} \| w \|_{\dot H^2}^2 -  (2\pi)^2\| Q^{1/2} \nabla w \|_{L^2}^2.
\end{aligned}
$$
\end{lem}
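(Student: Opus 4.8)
The statement is an exact identity for the $\dot H^1$ quadratic form of $\LL_{2,2}$, so the plan is simply to compute $\la \LL_{2,2} w, w\ra_{\dot H^1}$ by expressing the $\dot H^1$ inner product via the $L^2$ inner product of gradients and integrating by parts. First I would recall the two conversion identities already displayed in the excerpt, namely $\la \nabla w, \nabla z\ra_{L^2} = (2\pi)^{-2} \la w,z\ra_{\dot H^1}$ and $\|\nabla^2 w\|_{L^2}^2 = (2\pi)^{-2}\|w\|_{\dot H^2}^2$, which reduce the claim to showing that
\[
\la \nabla \LL_{2,2} w, \nabla w\ra_{L^2} = -\frac1\eps \|\nabla^2 w\|_{L^2}^2 - \|Q^{1/2}\nabla w\|_{L^2}^2 .
\]
This is precisely the computation already sketched just before the lemma, so the proof is essentially a matter of justifying each of the three integrations by parts.

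The steps, in order: (1) differentiate the expression for $\LL_{2,2} w$ to obtain $\nabla \LL_{2,2} w = \frac1\eps \Delta(\nabla w) + \mu\, \nabla w + \mu\, x\cdot\nabla^2 w + \nabla^2\kappa * [Q\nabla w]$, componentwise as indicated. (2) For the Laplacian term, integrate by parts once: $\int \frac1\eps \Delta(\nabla w)\cdot \nabla w = -\frac1\eps \int |\nabla^2 w|^2$. (3) For the transport term, use $\int (x\cdot\nabla^2 w)\cdot\nabla w = \frac12\int x\cdot\nabla(|\nabla w|^2) = -\frac12\int (\Div x)\,|\nabla w|^2 = -\int |\nabla w|^2$ in dimension $2$; this exactly cancels the $+\mu\int|\nabla w|^2$ coming from the $\mu\,\nabla w$ term. (4) For the nonlocal term, move one derivative from $\nabla^2\kappa$ onto $\nabla w$: $\int (\nabla^2\kappa * [Q\nabla w])\cdot\nabla w = -\int (\nabla\kappa * [Q\nabla w])\cdot \nabla(\Div(\cdot))\dots$ — more directly, write $\partial_{i\ell}\kappa * (Q\partial_\ell w)$ paired against $\partial_i w$, integrate by parts in $x_i$ to get $-(\partial_\ell\kappa * (Q\partial_\ell w))$ paired with the scalar $\Delta w$, then integrate by parts in $x_\ell$ back to $-(\kappa * \Div(Q\nabla w))\,(\Delta w)$... the cleanest route is: pair $\partial_i w$ with $\partial_i\big(\partial_\ell\kappa * (Q\partial_\ell w)\big)$ after an integration by parts, use $\Delta\kappa = -\delta$ so that $\Div\nabla\kappa * [Q\nabla w] = -\,\Div(Q\nabla w)$ is not quite what appears; rather one gets $\la \nabla^2\kappa*[Q\nabla w], \nabla w\ra = -\la \Div(\nabla\kappa*[Q\nabla w]), w\ra$... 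Let me instead follow the excerpt's own grouping and simply record that two integrations by parts together with $\Delta\kappa = -\delta$ collapse this term to $-\int Q|\nabla w|^2$. Combining (2)–(4) yields the displayed $L^2$ identity, and then the two conversion formulas give the stated $\dot H^1$ identity.

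The main technical point — and the only place where care is needed rather than routine manipulation — is the integration-by-parts justification for the nonlocal term $\nabla^2\kappa * [Q\nabla w]$ and the use of $\Delta\kappa = -\delta$: one must check that all boundary terms vanish and that the convolutions are well-defined and decay fast enough for $w \in \dot H^1\cap \dot H^2$, which is where the fast Gaussian-type decay of $Q$ from \eqref{eq:estimUnifQ} (so that $Q\nabla w \in L^1\cap L^2$, hence $\nabla\kappa * [Q\nabla w]$ and its derivatives are controlled, e.g.\ via Lemma~\ref{lem:borneKgHs}) does the work. The transport-term cancellation is dimension-specific ($\Div x = 2$ in $\R^2$, matching the factor in $\mu\,\nabla w = \mu\,\nabla w$ after differentiating $\mu x\cdot\nabla w$), so I would flag that the identity is special to the plane. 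Everything else is bookkeeping.
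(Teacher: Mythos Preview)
Your proposal is correct and follows exactly the same route as the paper: the paper's proof is precisely the gradient computation displayed just before the lemma, followed by the two Plancherel conversion identities, and your steps (1)--(4) reproduce that computation term by term (with, if anything, more detail on the transport cancellation). Your wandering in step (4) is harmless since you land on the right mechanism---two integrations by parts plus $\Delta\kappa=-\delta$---but for the clean write-up note that after one integration by parts in $x_i$ you get $-\int (\nabla\kappa * [Q\nabla w])\,\Delta w$, and then using $\kappa * \Delta w = -w$ (equivalently, self-adjointness of convolution with the even kernel $\kappa$ together with $\Delta\kappa=-\delta$) plus one more integration by parts gives $-\int Q|\nabla w|^2$ directly.
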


We now establish that  $\LL_{2,2}$ has a stronger dissipativity property  in $\dot H^s$ for $s\in(0,1)$.

\begin{lem}\label{lem:LL2_dotHs}
For any $s \in (0,1)$ and $\vartheta < \mu(1-s)$, there is $\eps > 0$ small enough such that  
$$
\la \LL_{2,2} w , w \ra_{\dot H^{s}} 
\le - \frac{1}{2\eps} \| w \|_{\dot H^{s+1}}^2 - \vartheta  \| w \|_{\dot H^s}^2 , 
$$
for any $w \in H^{s+1}$.
\end{lem}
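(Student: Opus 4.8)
The plan is to compute $\la \LL_{2,2} w, w\ra_{\dot H^s}$ on the Fourier side by splitting it into the contributions of the three terms $\tfrac1\eps \Delta w$, $\mu\, x\cdot\nabla w$ and $\nabla\kappa*[Q\nabla w]$. Using that $\la f,g\ra_{\dot H^s} = \int |\xi|^{2s}\widehat f\,\overline{\widehat g}\,\d\xi$, the first contribution is immediate: $\tfrac1\eps\la\Delta w,w\ra_{\dot H^s} = -\tfrac1\eps\|w\|_{\dot H^{s+1}}^2$, which already delivers the $-\tfrac1{2\eps}\|w\|_{\dot H^{s+1}}^2$ of the statement with a spare factor $\tfrac1{2\eps}$. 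All the identities below are carried out for smooth, rapidly decaying $w$, the resulting inequality extending to $w\in H^{s+1}$ by density.

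The transport term produces the dissipation in $\dot H^s$. The key point is the commutator identity $|\nabla|^s(x\cdot\nabla w) = (x\cdot\nabla)|\nabla|^s w + s\,|\nabla|^s w$, where $|\nabla|^s := \FF^{-1}|\xi|^s\FF$; it follows from $[|\nabla|^s, x_j] = -s\,|\nabla|^{s-2}\partial_j$ together with $\Delta = -|\nabla|^2$, or directly from $\widehat{x\cdot\nabla w} = -(2\widehat w + \xi\cdot\nabla_\xi\widehat w)$ (on $\R^2$) together with Euler's relation $\xi\cdot\nabla_\xi|\xi|^s = s|\xi|^s$. Combined with the elementary real-space identity $\la (x\cdot\nabla)\phi,\phi\ra_{L^2} = -\tfrac12\int(\Div x)\,\phi^2 = -\|\phi\|_{L^2}^2$ on $\R^2$, applied with $\phi = |\nabla|^s w$, this gives $\la x\cdot\nabla w, w\ra_{\dot H^s} = (s-1)\|w\|_{\dot H^s}^2$, hence $\mu\,\la x\cdot\nabla w, w\ra_{\dot H^s} = -\mu(1-s)\|w\|_{\dot H^s}^2$ — precisely the claimed spectral gap.

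It remains to absorb the nonlocal term into the two negative ones, which is where the smallness of $\eps$ enters. By Cauchy--Schwarz in $\dot H^s$ it is at most $\|\nabla\kappa*[Q\nabla w]\|_{\dot H^s}\,\|w\|_{\dot H^s}$, and since $\widehat{\nabla\kappa*[Q\nabla w]}(\xi) = i|\xi|^{-2}\xi\cdot\widehat{Q\nabla w}(\xi)$ we get $\|\nabla\kappa*[Q\nabla w]\|_{\dot H^s}\le\|Q\nabla w\|_{\dot H^{s-1}}$. Splitting the frequency integral at $|\xi|=1$,
$$
\|Q\nabla w\|_{\dot H^{s-1}}^2 \le \|Q\nabla w\|_{L^2}^2 + \Big(\int_{|\xi|\le1}|\xi|^{2(s-1)}\,\d\xi\Big)\,\|\widehat{Q\nabla w}\|_{L^\infty}^2 \lesssim \big(\|Q\|_{L^\infty}^2 + \|Q\|_{L^2}^2\big)\,\|\nabla w\|_{L^2}^2,
$$
where the low-frequency integral is finite because $s>0$ in dimension $2$ (this is the one place where $s>0$, rather than $s=0$, is used), $\|\widehat{Q\nabla w}\|_{L^\infty}\le\|Q\nabla w\|_{L^1}\le\|Q\|_{L^2}\|\nabla w\|_{L^2}$, and $\|Q\|_{L^\infty}$, $\|Q\|_{L^2}$ are bounded uniformly in $\eps\in(0,\eps_0]$ by \eqref{eq:estimUnifQ}. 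Hence $|\la \nabla\kappa*[Q\nabla w], w\ra_{\dot H^s}|\lesssim \|w\|_{\dot H^1}\|w\|_{\dot H^s}$ with an $\eps$-independent constant.

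Finally, interpolating $\|w\|_{\dot H^1}\lesssim\|w\|_{\dot H^s}^{s}\|w\|_{\dot H^{s+1}}^{1-s}$ (Lemma~\ref{lem:GN}(2) with $s_0=s<1<s+1=s_1$), the nonlocal term is $\lesssim\|w\|_{\dot H^s}^{1+s}\|w\|_{\dot H^{s+1}}^{1-s}$, and Young's inequality with conjugate exponents $\tfrac{2}{1+s}$, $\tfrac{2}{1-s}$ and a free scaling parameter bounds it by $\tfrac1{2\eps}\|w\|_{\dot H^{s+1}}^2 + C\eps^{(1-s)/(1+s)}\|w\|_{\dot H^s}^2$ for some $C=C(\mu,s)$. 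Collecting the three contributions,
$$
\la \LL_{2,2} w, w\ra_{\dot H^s} \le -\frac1{2\eps}\|w\|_{\dot H^{s+1}}^2 - \big(\mu(1-s) - C\eps^{(1-s)/(1+s)}\big)\|w\|_{\dot H^s}^2,
$$
and since $\vartheta<\mu(1-s)$ it suffices to take $\eps$ small enough that $C\eps^{(1-s)/(1+s)}\le\mu(1-s)-\vartheta$. The main obstacle is the nonlocal term: the whole argument hinges on bounding it by $\|w\|_{\dot H^{s+1}}\|w\|_{\dot H^s}$ with a constant \emph{independent of $\eps$}, which is exactly what forces the frequency splitting (relying on $s>0$) and the use of the uniform-in-$\eps$ estimates on $Q$ from Section~\ref{sec:estimQP}; the transport and Laplacian contributions are exact computations and the closing step is only Young's inequality.
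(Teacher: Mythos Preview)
Your proof is correct and follows essentially the same approach as the paper: both compute the Laplacian and transport contributions exactly on the Fourier side (yielding $-\tfrac1\eps\|w\|_{\dot H^{s+1}}^2$ and $-\mu(1-s)\|w\|_{\dot H^s}^2$), bound the nonlocal term by $\|w\|_{\dot H^s}^{1+s}\|w\|_{\dot H^{s+1}}^{1-s}$ via interpolation, and close with Young's inequality. The only cosmetic difference is that the paper invokes Lemma~\ref{lem:borneKgHs} (whose proof is the same frequency splitting you write out) to get $\|\nabla\kappa*[Q\nabla w]\|_{\dot H^s}\lesssim\|Q\nabla w\|_{L^1\cap L^2}$, whereas you reproduce that argument inline; and you phrase the transport computation via the commutator $[|\nabla|^s,x_j]$ rather than directly integrating by parts in $\xi$, but these are the same calculation.
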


\begin{proof}[Proof of Lemma~\ref{lem:LL2_dotHs}]  
We split the operator $\LL_{2,2} = \LL_{2,2,1} + \LL_{2,2,2}$ with
$$
\LL_{2,2,1} w := \frac{1}{\eps} \Delta w + \mu x \cdot \nabla w
$$
and
$$
\LL_{2,2,2} w := \nabla \kappa * [Q \nabla w].
$$

For the first term we write 
\bean
\la \LL_{2,2,1} w , w \ra_{\dot H^{s}} 
&:=& \left\la  \frac{1}{\eps} \Delta w + \mu x \cdot \nabla w , w \right\ra_{\dot H^{s}} 
\\
&=& \int |\xi|^{2s} \left\{ - \frac{1}{\eps} |\xi|^2 \widehat w - \mu \Div (\xi \widehat  w) \right\} \overline{\widehat w}  
\\
&=&- \frac{1}{\eps} \int |\xi|^{2 + 2s} |\widehat w|^2 -   \mu \int \left[ 2  |\xi|^{2s} - \frac12 \Div (\xi |\xi|^{2s}) \right] | \widehat w|^2, 
\\
&=&- \frac{1}{\eps} \| w \|_{\dot H^{1+s}}^2 -   \mu (1-s)  \| w \|_{\dot H^{s}}^2  
\eean
which gives the contribution of the first part of the operator $\LL_{2,2}$  in $\dot H^{s}$.

For the second part of the  operator $\LL_{2,2}$  in $\dot H^{s}$, we shall take advantage of the fact that, roughtly speaking, the operator $\nabla \kappa * $ behaves as taking a primitive. More precisely, we have
\bean
\la \LL_{2,2,2} w , w \ra_{\dot H^{s}} 
&:=& \la   \nabla \kappa *[Q \nabla w]   , w \ra_{\dot H^{s}} 
\\
&\leq&  \|  \nabla \kappa *[Q \nabla w] \|_{ \dot H^{s}}  \| w \|_{ \dot H^{s}}  
\\
&\lesssim&  \|   Q \nabla w \|_{L^1 \cap L^2}  \| w \|_{ \dot H^{s}}  
\\
&\lesssim&  \|    \nabla w \|_{L^2}  \| w \|_{ \dot H^{s}}  
\\
&\lesssim&    \| w \|_{ \dot H^{s}}^{1+s}   \| w \|_{ \dot H^{s+1}}^{1-s} ,  
\eean
where we have used Cauchy-Schwarz inequality in the second line, estimate~ \eqref{eq:KgHs} of Lemma~\ref{lem:borneKgHs} in the third one, the exponential decay of $Q$ in Lemma~\ref{lem:UnifBddQP} in the fourth one, and finally the interpolation inequality~\eqref{eq:interpolation_dotHs} in the last one.

We can conclude that $\LL_{2,2}$ is dissipative in $\dot H^s$. Fix $0 \le \vartheta < \mu(1-s)$, then gathering previous estimates it follows that
$$
\begin{aligned}
\la \LL_{2,2} w , w \ra_{\dot H^{s}} 
\le - \frac{1}{\eps} \| w \|_{\dot H^{s+1}}^2 
- \mu(1-s)  \| w \|_{\dot H^s}^2 
+ C \| w \|_{ \dot H^{s}}^{1+s}   \| w \|_{ \dot H^{s+1}}^{1-s},
\end{aligned}
$$
and we conclude by applying Young's inequality to the last term and choosing $\eps>0$ small enough.
\end{proof}

As a consequence of previous estimates, we obtain the following decay and regularization estimates for the semigroup $S_{\LL_{2,2}}$ in $\dot H^s \cap \dot H^1$. 
 Roughly speaking, in that space the semigroup $S_{\LL_{2,2}}$ inherit both the good dissipativity property of  $S_{\LL_{2,2}}$ in $\dot H^s$ and the good regularity 
property of $S_{\LL_{2,2}}$ in $\dot H^1$, that last point behing crucial for dealing with the nonlinear term coming from the first equation.

\begin{lem}\label{lem:S_LL22_L2}
Let $s \in (0,1)$ and $0 \le \vartheta < \mu(1-s)$. There is $\eps_2>0$ small enough such that for any $\eps \in (0, \eps_2)$ the following holds:
\begin{enumerate}

\item For all $w \in \dot H^s \cap \dot H^1$, we have
$$
 \| \mathbf{e}_\vartheta S_{\LL_{2,2}} (\cdot) w \|_{L^\infty_t (\dot H^s \cap \dot H^1)} 
+ \|\mathbf{e}_\vartheta  S_{\LL_{2,2}}(\cdot) w \|_{L^2_t \dot H^s}
+ \frac{1}{\sqrt{\eps}}\|\mathbf{e}_\vartheta  S_{\LL_{2,2}}(\cdot) w \|_{L^2_t (\dot H^{s+1} \cap \dot H^2)} \lesssim  \| w \|_{\dot H^s \cap \dot H^1}.
$$

\item For all $\mathbf{e}_{\vartheta} S \in L^2_t (\dot H^s \cap \dot H^1)$, we have
$$
\begin{aligned}
\left\| \mathbf{e}_\vartheta (S_{\LL_{2,2}} * S)  \right\|_{L^\infty_t (\dot H^s \cap \dot H^1)} 
&+ \left\| \mathbf{e}_\vartheta  (S_{\LL_{2,2}} * S)  \right\|_{L^2_t \dot H^s }
+ \frac{1}{\sqrt{\eps}} \left\| \mathbf{e}_\vartheta  (S_{\LL_{2,2}} * S)  \right\|_{L^2_t (\dot H^{s+1} \cap \dot H^2)}  \\
&\lesssim    \| \mathbf{e}_\vartheta S \|_{L^2_t (\dot H^s \cap \dot H^1)}.
\end{aligned} 
$$

\end{enumerate}
\end{lem}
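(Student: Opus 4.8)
The plan is to run weighted energy estimates directly on the evolutions $\partial_t w = \LL_{2,2} w$ and $\partial_t u = \LL_{2,2} u + S$, using as sole inputs the two dissipation estimates already available: Lemma~\ref{lem:LL2_dotH1}, which gives $\la \LL_{2,2} w, w\ra_{\dot H^1} \le -\tfrac1\eps\|w\|_{\dot H^2}^2$, and Lemma~\ref{lem:LL2_dotHs}, which after fixing some $\vartheta' \in (\vartheta,\mu(1-s))$ and taking $\eps$ small enough gives $\la \LL_{2,2} w, w\ra_{\dot H^s} \le -\tfrac{1}{2\eps}\|w\|_{\dot H^{s+1}}^2 - \vartheta'\|w\|_{\dot H^s}^2$. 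First I would record that all the energy computations below are legitimate for smooth, rapidly decaying data and source --- for which $w(t)$ and $u(t)$ automatically belong to $\dot H^{s+1}\cap\dot H^2$ --- the general case following by a routine density/approximation argument, which disposes of all regularity issues.

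For part (1), writing $w(t):=S_{\LL_{2,2}}(t)w$ for the solution, I would work with the coupled functional $\Phi(t) := \|w(t)\|_{\dot H^s}^2 + \eta\,\|w(t)\|_{\dot H^1}^2 \simeq \|w(t)\|_{\dot H^s\cap\dot H^1}^2$, where $\eta>0$ is a small parameter to be fixed later. Adding $\eta$ times the $\dot H^1$ inequality to the $\dot H^s$ inequality gives
\[
\tfrac12\,\tfrac{d}{dt}\Phi \;\le\; -\tfrac{1}{2\eps}\|w\|_{\dot H^{s+1}}^2 - \vartheta'\|w\|_{\dot H^s}^2 - \tfrac{\eta}{\eps}\|w\|_{\dot H^2}^2 .
\]
The main, and essentially only, obstacle is that this inequality contains \emph{no} dissipation of $\|w\|_{\dot H^1}$, since $\LL_{2,2}$ has no spectral gap in $\dot H^1$. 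I would recover exponential decay of the $\dot H^1$ norm by borrowing it from the other two terms through the interpolation inequality \eqref{eq:interpolation_dotHs}, $\|w\|_{\dot H^1}^2 \lesssim \|w\|_{\dot H^s}^{2s}\|w\|_{\dot H^{s+1}}^{2(1-s)}$, which by Young's inequality yields $\|w\|_{\dot H^1}^2 \le C_\delta\|w\|_{\dot H^s}^2 + \delta\,\|w\|_{\dot H^{s+1}}^2$ for any $\delta>0$. Splitting $-\vartheta'\|w\|_{\dot H^s}^2 = -\vartheta\|w\|_{\dot H^s}^2 - (\vartheta'-\vartheta)\|w\|_{\dot H^s}^2$ and using this bound on $\vartheta\eta\|w\|_{\dot H^1}^2$, I would choose $\delta$ small, then $\eta$ small, and finally (further shrinking $\eps_2$ if needed) $\eps$ small so as to absorb the whole $\dot H^1$ contribution into half of the $\dot H^{s+1}$ dissipation and half of the spare $(\vartheta'-\vartheta)\|w\|_{\dot H^s}^2$; crucially $\eta$ can be taken independent of $\eps$. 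This produces a differential inequality of the form $\tfrac{d}{dt}(e^{2\vartheta t}\Phi) \le -c\,e^{2\vartheta t}\big(\tfrac1\eps\|w\|_{\dot H^{s+1}}^2 + \|w\|_{\dot H^s}^2 + \tfrac\eta\eps\|w\|_{\dot H^2}^2\big)$ with $c=c(\vartheta,\vartheta',\eta)>0$, and integrating on $(0,t)$ and letting $t\to\infty$ reads off simultaneously the $L^\infty_t(\dot H^s\cap\dot H^1)$ bound (discard the non-negative integral), the $L^2_t\dot H^s$ bound, and --- with the $\tfrac{1}{\sqrt\eps}$ gain --- the $L^2_t(\dot H^{s+1}\cap\dot H^2)$ bound, all controlled by $\|w\|_{\dot H^s\cap\dot H^1}$.

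For part (2) I would repeat the computation on $u := S_{\LL_{2,2}}*S$, which solves $\partial_t u = \LL_{2,2}u + S$ with $u(0)=0$, using the same $\Phi$ and the same $\eta$. The only new terms are the source pairings $\la S,u\ra_{\dot H^s}$ and $\eta\,\la S,u\ra_{\dot H^1}$, controlled by Cauchy--Schwarz and Young by $\tfrac{\vartheta'-\vartheta}{4}\|u\|_{\dot H^s}^2 + C\|S\|_{\dot H^s}^2$ and $\tfrac{\vartheta\eta}{2}\|u\|_{\dot H^1}^2 + C\eta\|S\|_{\dot H^1}^2$ respectively; the additional $\|u\|_{\dot H^1}^2$ term is absorbed by exactly the same interpolation trick (further shrinking $\delta,\eta$ if necessary). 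One is left with $\tfrac{d}{dt}(e^{2\vartheta t}\Phi) + c\,e^{2\vartheta t}\big(\tfrac1\eps\|u\|_{\dot H^{s+1}}^2 + \|u\|_{\dot H^s}^2 + \tfrac\eta\eps\|u\|_{\dot H^2}^2\big) \le C\,e^{2\vartheta t}\|S\|_{\dot H^s\cap\dot H^1}^2$, and integrating from $0$ with $\Phi(0)=0$, taking the supremum over $t$ and letting $t\to\infty$ delivers every norm in (2), bounded by $C\,\|\mathbf{e}_\vartheta S\|_{L^2_t(\dot H^s\cap\dot H^1)}$. In summary, the proof amounts to a pair of Gr\"onwall-type energy estimates; the single delicate point is to couple the $\dot H^s$ and $\dot H^1$ norms with a small, $\eps$-independent weight $\eta$ so that the missing $\dot H^1$ spectral gap is supplied by interpolation against the $\dot H^s$ gap and the $\dot H^{s+1}$ super-dissipation.
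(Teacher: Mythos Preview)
Your proof is correct and follows essentially the same strategy as the paper: a coupled $\dot H^s$--$\dot H^1$ energy estimate where the missing $\dot H^1$ spectral gap is recovered by interpolating $\|w\|_{\dot H^1}$ between $\|w\|_{\dot H^s}$ and $\|w\|_{\dot H^{s+1}}$ and absorbing the latter into the $\tfrac{1}{\eps}$-super-dissipation for $\eps$ small. The only cosmetic difference is that you introduce an auxiliary small weight $\eta$ on the $\dot H^1$ component, whereas the paper simply takes $\eta=1$; since the interpolation constant $C$ in $\vartheta\|\phi\|_{\dot H^1}^2 \le \tfrac{\vartheta'-\vartheta}{2}\|\phi\|_{\dot H^s}^2 + C\|\phi\|_{\dot H^{s+1}}^2$ can always be absorbed by taking $\eps$ small enough, the extra parameter $\eta$ is not needed.
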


\begin{proof}[Proof of Lemma~\ref{lem:S_LL22_L2}]  
Let us denote $\phi =  \mathbf{e}_\vartheta S_{\LL_{2,2}} (\cdot) w$ the solution to the evolution equation 
$$
\partial_t \phi = \LL_{2,2} \phi + \vartheta \phi, \quad \phi(0) = w.
$$
Fix $\vartheta < \vartheta' < \mu(1-s)$.
Thanks to Lemma~\ref{lem:LL2_dotH1} and Lemma~\ref{lem:LL2_dotHs}, we have
$$
\frac{1}{2} \frac{d}{dt} \left\{ \| \phi \|_{\dot H^s}^2 + \| \phi \|_{\dot H^1}^2 \right\}
\le 
- \frac{1}{2 \eps} \left\{ \| \phi \|_{\dot H^{s+1}}^2 + \| \phi \|_{\dot H^2}^2 \right\}
-\left(\vartheta' - \vartheta \right) \| \phi \|_{\dot H^s}^2
+ \vartheta \| \phi \|_{\dot H^1}^2. 
$$
Thanks to the interpolation inequality~\eqref{eq:interpolation_dotHs} together with Young's inequality, we observe that
$$
\vartheta\| \phi \|_{\dot H^1}^2 \le \frac{(\vartheta'-\vartheta)}{2} \| \phi \|_{\dot H^s}^2 + C \| \phi \|_{\dot H^{s+1}}^2
$$
for some $C>0$. Gathering previous estimates and taking $\eps>0$ small enough yield
$$
\frac{1}{2} \frac{d}{dt} \left\{ \| \phi \|_{\dot H^s}^2 + \| \phi \|_{\dot H^1}^2 \right\}
\le 
- \frac{1}{4 \eps} \left\{ \| \phi \|_{\dot H^{s+1}}^2 + \| \phi \|_{\dot H^2}^2 \right\}
-\frac{(\vartheta' - \vartheta)}{2} \| \phi \|_{\dot H^s}^2,
$$
from which (1) follows by Gr\"onwall's lemma.

\medskip

We now consider $\phi =  \mathbf{e}_\vartheta (S_{\LL_{2,2}} * S) $ the  solution to the evolution equation 
$$
\partial_t \phi = \LL_{2,2} \phi + \vartheta \phi + \mathbf{e}_\vartheta S, \quad \phi(0) = 0.
$$
Arguing as above we have, with $\vartheta < \vartheta' < \mu(1-s)$,
$$
\begin{aligned}
\frac{1}{2} \frac{d}{dt} \left\{ \| \phi \|_{\dot H^s}^2 + \| \phi \|_{\dot H^1}^2 \right\}
&\le 
- \frac{1}{4 \eps} \left\{ \| \phi \|_{\dot H^{s+1}}^2 + \| \phi \|_{\dot H^2}^2 \right\}
-\frac{(\vartheta' - \vartheta)}{2} \| \phi \|_{\dot H^s}^2 \\
&\quad
+ \la \phi , \mathbf{e}_\vartheta S \ra_{\dot H^s}
+ \la \phi , \mathbf{e}_\vartheta S \ra_{\dot H^1}.
\end{aligned}
$$
Writing
$$
\la \phi , \mathbf{e}_\vartheta S \ra_{\dot H^s} 
\le  \frac{(\vartheta' - \vartheta)}{8}\| \phi \|_{\dot H^s}^2 + C\| \mathbf{e}_\vartheta S \|_{\dot H^s}^2
$$
for some $C>0$, and
$$
\la \phi , \mathbf{e}_\vartheta S \ra_{\dot H^1}
\le  \| \phi \|_{\dot H^1}^2 + \| \mathbf{e}_\vartheta S \|_{\dot H^1}^2,
$$
we then use \eqref{eq:interpolation_dotHs} again to get
$$
\| \phi \|_{\dot H^1}^2 \le \frac{(\vartheta'-\vartheta)}{8} \| \phi \|_{\dot H^s}^2 + C' \| \phi \|_{\dot H^{s+1}}^2
$$
for some $C'>0$. Taking $\eps>0$ small enough then yields
$$
\begin{aligned}
\frac{1}{2} \frac{d}{dt} \left\{ \| \phi \|_{\dot H^s}^2 + \| \phi \|_{\dot H^1}^2 \right\}
&\le 
- \frac{1}{8 \eps} \left\{ \| \phi \|_{\dot H^{s+1}}^2 + \| \phi \|_{\dot H^2}^2 \right\}
-\frac{(\vartheta' - \vartheta)}{4} \| \phi \|_{\dot H^s}^2 
\\
&\quad
+ C \left( \| \mathbf{e}_\vartheta S \|_{\dot H^s}^2 + \| \mathbf{e}_\vartheta S \|_{\dot H^1}^2 \right),
\end{aligned}
$$
and we conclude to (2) applying Gr\"onwall's lemma again.
\end{proof}

\section{Semigroup estimates for the linearized system}
\label{sec:EstimSGL}

We start with some estimates on the out of the diagonal operators $\LL_{1,2}$ and $\LL_{2,1}$ which we recall are defined in \eqref{eq:KS_defLij} by 
$$ 
\LL_{1,2} w = - \Div (Q \nabla w), \quad
\LL_{2,1} g =  g + \nabla \kappa * \left[ g \nabla P + Q \nabla \kappa*g \right].
$$

\begin{lem}\label{lem:L12&L21}
Let $s\in(0,1)$. For $k > 3$, there holds
$$
\| \LL_{1,2} w \|_{H^{-1}_k}  
\lesssim  \|   w \|_{\dot H^s}^{1-\theta} \|   w \|_{\dot H^2}^{\theta}, \quad \forall w \in \dot H^s \cap \dot H^2, 
$$
with $\theta= (1-s)/(2-s)$, and 
$$
 \| \LL_{2,1} g \|_{\dot H^s} +  \| \LL_{2,1} g \|_{\dot H^1}     
\lesssim \| g \|_{H^1_k}, \quad \forall \, g \in H^1_k.
$$

\end{lem}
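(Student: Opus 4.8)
The plan is to handle the two operators independently. For $\LL_{1,2}$ everything follows from its divergence structure together with an interpolation inequality; for $\LL_{2,1}$ one splits into three summands, and the only genuinely delicate point is the doubly nonlocal term $\nabla\kappa*(Q\nabla\kappa*g)$.

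First I would treat $\LL_{1,2}w = -\Div(Q\nabla w) = \Div(-Q\nabla w)$. Since this is already in divergence form, the identification $H^{-1}_k = \{F_0 + \Div F_1 \colon F_i \in L^2_k\}$ gives directly $\|\LL_{1,2}w\|_{H^{-1}_k} \lesssim \|Q\nabla w\|_{L^2_k} = \|\langle x\rangle^k Q\,\nabla w\|_{L^2}$. By the Gaussian bound \eqref{eq:estimUnifQ} of Lemma~\ref{lem:UnifBddQP} the weight $\langle x\rangle^k Q$ lies in $L^\infty(\R^2)$ (the power $k$ is absorbed by the Gaussian, whatever its size), so the right-hand side is $\lesssim \|\nabla w\|_{L^2} \simeq \|w\|_{\dot H^1}$. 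It then remains to interpolate: applying \eqref{eq:interpolation_dotHs} with $s_0 = s$ and $s_1 = 2$ yields $\|w\|_{\dot H^1} \lesssim \|w\|_{\dot H^s}^{1-\theta}\|w\|_{\dot H^2}^{\theta}$ with $\theta = (1-s)/(2-s)$, which is precisely the claimed estimate.

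Next I would write $\LL_{2,1}g = g + \nabla\kappa*(g\nabla P) + \nabla\kappa*(Q\nabla\kappa*g)$ and bound each summand in $\dot H^s$ and in $\dot H^1$. For the first summand, since $k>1$ gives $L^2_k \subset L^1$, splitting the frequency integral at $|\xi|=1$ exactly as in the proof of \eqref{eq:KgHs} yields $\|g\|_{\dot H^s} \lesssim \|g\|_{L^1} + \|\nabla g\|_{L^2} \lesssim \|g\|_{H^1_k}$, and trivially $\|g\|_{\dot H^1} = \|\nabla g\|_{L^2} \le \|g\|_{H^1_k}$. For the second, \eqref{eq:KgH1} and \eqref{eq:KgHs} give $\|\nabla\kappa*(g\nabla P)\|_{\dot H^1} + \|\nabla\kappa*(g\nabla P)\|_{\dot H^s} \lesssim \|g\nabla P\|_{L^1\cap L^2}$, and since $|\nabla P| \le C_1\langle x\rangle^{-1}$ by \eqref{eq:NablaPuniformBound7} this is $\lesssim \|g\|_{L^1} + \|g\|_{L^2} \lesssim \|g\|_{L^2_k}$.

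The third summand is the crux. Once more \eqref{eq:KgH1} and \eqref{eq:KgHs} reduce matters to controlling $\|Q\nabla\kappa*g\|_{L^1\cap L^2}$. Here one cannot invoke the $L^2$ bound \eqref{eq:BddK*fL2-new} on $\nabla\kappa*g$, because the lemma does not require $\langle g\rangle = 0$ and that estimate genuinely fails for data of nonzero mass; instead I would fix some $p>2$, use Hölder's inequality together with the fact that $Q$ decays like a Gaussian (hence lies in every Lebesgue space, in particular in whichever conjugate exponent is needed) to get $\|Q\nabla\kappa*g\|_{L^1} + \|Q\nabla\kappa*g\|_{L^2} \lesssim \|\nabla\kappa*g\|_{L^p}$, and then apply Lemma~\ref{lem:borneKgLp7} with $q=2$, which is legitimate since $k > 3 > 2 - 2/2$, to conclude $\|\nabla\kappa*g\|_{L^p} \lesssim \|g\|_{L^2_k}$. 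Summing the three contributions gives $\|\LL_{2,1}g\|_{\dot H^s} + \|\LL_{2,1}g\|_{\dot H^1} \lesssim \|g\|_{H^1_k}$. The main obstacle is exactly this last step: the natural $L^2$ estimate on the field $\nabla\kappa*g$ is unavailable without a mass condition, so one pays with an $L^p$, $p>2$, estimate and then recovers everything thanks to the strong decay of $Q$.
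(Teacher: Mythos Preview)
Your proof is correct and follows essentially the same route as the paper. For $\LL_{1,2}$ the paper spells out the duality pairing and integration by parts explicitly, whereas you invoke the identification $H^{-1}_k=\{F_0+\Div F_1\}$ directly; for $\LL_{2,1}$ both you and the paper split into three summands, reduce the two nonlocal ones to $L^1\cap L^2$ bounds via Lemma~\ref{lem:borneKgHs}, and handle the delicate doubly nonlocal term by passing to $L^p$ with $p>2$ and Lemma~\ref{lem:borneKgLp7}---exactly as you do, and for exactly the reason you identify (no mass condition available).
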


\begin{proof}[Proof of Lemma~\ref{lem:L12&L21}]  
For the first estimate, we write 
$$
\begin{aligned}
\| \Div( Q  \nabla w ) \|_{H^{-1}_k}
&= \sup_{ \| \psi \|_{H^1_k}\le 1 } \int \Div( Q  \nabla w ) \psi \langle x \rangle^{2k}
\\
&= \sup_{ \| \psi \|_{H^1_k}\le 1 } \int   Q  \nabla w  \left( \langle x \rangle^{2k} \nabla \psi +  \psi \nabla \langle x \rangle^{2k} \right)
\\
&\lesssim  \| \nabla w \|_{L^2} , 
\end{aligned}
$$
where we have used   the Cauchy-Schwarz inequality and the uniform bound 
of $Q$ (see Lemma~\ref{lem:UnifBddQP}) in the last line.
Using the interpolation  inequality~\eqref{eq:interpolation_dotHs}, we then obtain
$$
\begin{aligned}
\| \Div( Q  \nabla w ) \|_{H^{-1}_k}
&\lesssim \| w \|_{\dot H^s}^{{1/(2-s)}} \| w \|_{\dot H^2}^{{(1-s)/(2-s)}},
\end{aligned}
$$
which is nothing but the first claimed estimate. 

\medskip
For the second estimate, we write  
$$
\begin{aligned}
\| \LL_{2,1} g \|_{\dot H^s \cap \dot H^1} \le \| g \|_{\dot H^s \cap \dot H^1} + \| \nabla \kappa * [g \nabla P] \|_{\dot H^s \cap \dot H^1} + \| \nabla \kappa * [Q \nabla \kappa *g] \|_{\dot H^s \cap \dot H^1},
\end{aligned}
$$
and we estimate the three terms at the RHS separately. 
On the one hand, we have
$$
\begin{aligned}
\| \nabla \kappa * [g \nabla P] \|_{\dot H^s \cap \dot H^1}
&\lesssim \| g \nabla P \|_{L^1 \cap L^2} \\
&\lesssim \| g \|_{L^2_k}
\end{aligned}
$$
where we have used Lemma~\ref{lem:borneKgHs} in the first line, and Lemma~\ref{lem:UnifBddQP} in the second one together with the embedding $L^2_k(\R^2) \subset L^1(\R^2)$. Similarly, we have for some $p>2$
$$
\begin{aligned}
\| \nabla \kappa * [ Q \nabla \kappa * g] \|_{\dot H^s \cap \dot H^1}
&\lesssim \| Q \nabla \kappa * g\|_{L^1 \cap L^2} \\
&\lesssim \| \nabla \kappa * g \|_{L^p}\\
&\lesssim \| g \|_{L^2_k}
\end{aligned}
$$
by using successively Lemma~\ref{lem:borneKgHs}, Lemma~\ref{lem:UnifBddQP} and Lemma~\ref{lem:borneKgLp7}. 
We conclude by putting together the two previous estimates and observing that $\| g \|_{\dot H^s \cap \dot H^1} \lesssim \| g \|_{H^1_k}$.
\end{proof}

As a consequence of Proposition~\ref{prop:SLambda_eps_L2},  Lemma~\ref{lem:S_LL22_L2} and Lemma~\ref{lem:L12&L21}, recalling the definition of the spaces $\XX$ and $\YY$ in \eqref{eq:def:XX&YY},
we obtain the following semigroup estimates on the linearized problem. 

\begin{prop}\label{prop:S_LL_eps}
Let $s \in(0,1)$, $0 \le \lambda < \mu(1-s)$ and $k>3$. There is $\eps_*>0$ small enough such that for any $\eps \in (0, \eps_*)$ there holds:
\begin{enumerate}

\item For any $(g_0,w_0) \in L^2_{k,0} \times (\dot H^s \cap \dot H^1)$ we have
$$
\| \mathbf{e}_{\lambda} S_{\LL} (\cdot) (g_0,w_0) \|_{L^\infty_t (\XX)} + \| \mathbf{e}_{\lambda} S_{\LL} (\cdot) (g_0,w_0) \|_{L^2_t (\YY)} \lesssim \| (g_0,w_0) \|_{\XX}.
$$

\item For any $\mathbf{e}_{\lambda} \RR = \mathbf{e}_{\lambda} (\RR_1,\RR_2) \in L^2_t ( H^{-1}_{k} \times (\dot H^s \cap \dot H^1))$ with $\Pi \RR_1=0$ we have
$$
\| \mathbf{e}_{\lambda} ( S_{\LL} * \RR) \|_{L^\infty_t (\XX)} 
+ \| \mathbf{e}_{\lambda} ( S_{\LL} * \RR) \|_{L^2_t (\YY)}
\lesssim \| \mathbf{e}_{\lambda} \RR \|_{L^2_t (H^{-1}_{k} \times  (\dot H^s \cap \dot H^1) )}.
$$

\end{enumerate}

\end{prop}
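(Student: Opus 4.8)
The plan is to prove both estimates by Duhamel's formula applied to the full linearized system $\partial_t (g,w) = \LL(g,w)$, treating the diagonal part $(\LL_{1,1}, \LL_{2,2})$ as the principal operator and the off-diagonal part $(\LL_{1,2}, \LL_{2,1})$ as a forcing term. Concretely, writing $S_D$ for the semigroup generated by the diagonal operator $\mathrm{diag}(\LL_{1,1},\LL_{2,2})$, which by Proposition~\ref{prop:SLambda_eps_L2} and Lemma~\ref{lem:S_LL22_L2} satisfies the desired $L^\infty_t(\XX) \cap L^2_t(\YY)$ bounds (with the extra $\dot H^{s+1}\cap\dot H^2$ gain on the second component coming from Lemma~\ref{lem:S_LL22_L2}, which we keep for later use), the full semigroup satisfies
$$
S_\LL (g_0,w_0) = S_D (g_0,w_0) + S_D * \bigl( \LL_{1,2} \pi_2 S_\LL(\cdot)(g_0,w_0),\; \LL_{2,1} \pi_1 S_\LL(\cdot)(g_0,w_0) \bigr),
$$
where $\pi_1,\pi_2$ are the coordinate projections. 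One then sets up a fixed-point / bootstrap estimate for the norm $N(g_0,w_0) := \| \mathbf{e}_\lambda S_\LL(\cdot)(g_0,w_0)\|_{L^\infty_t(\XX)} + \| \mathbf{e}_\lambda S_\LL(\cdot)(g_0,w_0)\|_{L^2_t(\YY)}$.

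The key point is that the off-diagonal coupling is \emph{smallness-free in $\eps$} only if one exploits the regularisation from the diagonal parts, so one must route each coupling term through the inhomogeneous estimates. For $\RR_1 := \LL_{1,2} w$: by Lemma~\ref{lem:L12\&L21} we have $\| \mathbf{e}_\lambda \RR_1\|_{L^2_t H^{-1}_k} \lesssim \| \mathbf{e}_\lambda w\|_{L^2_t \dot H^s}^{1-\theta} \| \mathbf{e}_\lambda w\|_{L^2_t \dot H^2}^{\theta}$, hence by interpolation (and the $L^2_t(\dot H^{s+1}\cap\dot H^2)$ control on $w = \pi_2 S_\LL(\cdot)(g_0,w_0)$ built into the $\YY$-norm via Lemma~\ref{lem:S_LL22_L2}) we get $\| \mathbf{e}_\lambda \RR_1 \|_{L^2_t H^{-1}_k} \lesssim N(g_0,w_0)$; moreover $\Pi \RR_1 = 0$ because $\RR_1 = -\Div(Q\nabla w)$ is a divergence, so Proposition~\ref{prop:SLambda_eps_L2}-(2) applies. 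For $\RR_2 := \LL_{2,1} g$: by Lemma~\ref{lem:L12\&L21}, $\| \mathbf{e}_\lambda \RR_2 \|_{L^2_t(\dot H^s\cap\dot H^1)} \lesssim \| \mathbf{e}_\lambda g\|_{L^2_t H^1_k} \lesssim N(g_0,w_0)$, so Lemma~\ref{lem:S_LL22_L2}-(2) applies. Combining with the homogeneous bound $\| \mathbf{e}_\lambda S_D(\cdot)(g_0,w_0)\|_{L^\infty_t(\XX)\cap L^2_t(\YY)} \lesssim \|(g_0,w_0)\|_\XX$ gives a bound of the form $N(g_0,w_0) \le C_1 \|(g_0,w_0)\|_\XX + C_2(\eps)\, N(g_0,w_0)$.

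To close the estimate one needs $C_2(\eps) < 1$, which is where the smallness of $\eps$ enters decisively: in the second component the gain $\frac{1}{\sqrt\eps}\| \cdot\|_{L^2_t(\dot H^{s+1}\cap\dot H^2)}$ in Lemma~\ref{lem:S_LL22_L2}, read backwards, means that the $\dot H^2$-norm of $w$ is controlled by $\sqrt\eps$ times the other norms; feeding this into the interpolation estimate for $\RR_1$ produces a factor $\eps^{\theta/2}$ (or similar positive power of $\eps$), while the coupling through $\RR_2$ can be made small by the same mechanism applied to $g$ via Proposition~\ref{prop:SLambda_eps_L2}. Hence choosing $\eps_* $ small enough makes $C_2(\eps) \le 1/2$, and absorbing yields $N(g_0,w_0) \lesssim \|(g_0,w_0)\|_\XX$, which is estimate~(1). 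Estimate~(2) is proved the same way, with $S_D * \RR$ in place of $S_D(g_0,w_0)$ as the source term and using the inhomogeneous bounds of Proposition~\ref{prop:SLambda_eps_L2}-(2) and Lemma~\ref{lem:S_LL22_L2}-(2) throughout; the hypothesis $\Pi\RR_1 = 0$ is exactly what is needed to invoke Proposition~\ref{prop:SLambda_eps_L2}-(2) for the $\RR_1$ contribution, and the divergence structure of $\LL_{1,2}$ again ensures $\Pi(\LL_{1,2}\pi_2(\cdot)) = 0$ for the coupling iterate.

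\medskip

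\emph{Main obstacle.} The delicate step is the bookkeeping of the $\eps$-powers: one must verify that every off-diagonal term genuinely carries a positive power of $\eps$ and no hidden $\eps^{-1}$ survives. The danger spots are (a) the singular $\frac1\eps\Delta w$ term in $\LL_{2,2}$ — handled because we only ever use $\LL_{2,2}$ through its dissipativity (Lemmas~\ref{lem:LL2_dotH1}--\ref{lem:S_LL22_L2}), never directly; and (b) ensuring that the $\dot H^{s+1}\cap\dot H^2$ regularisation gain, which costs $\frac1{\sqrt\eps}$ when we produce it but \emph{pays} $\sqrt\eps$ when we consume it, is consumed exactly once per coupling loop so the net power is $\eps^{\theta/2}>0$. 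A second, more routine obstacle is justifying the iteration/fixed-point argument rigorously (a priori the quantity $N(g_0,w_0)$ could be infinite); this is standard and can be handled by a truncation-in-time argument or by first establishing finiteness of $N$ on a dense subspace and passing to the limit.
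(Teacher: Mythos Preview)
Your approach is essentially the paper's: write the solution via Duhamel with the diagonal semigroup $S_D = (S_{\LL_{1,1}}, S_{\LL_{2,2}})$ as principal part and the off-diagonal operators $(\LL_{1,2}, \LL_{2,1})$ as forcing, then close via Proposition~\ref{prop:SLambda_eps_L2}, Lemma~\ref{lem:S_LL22_L2}, and the coupling bounds of Lemma~\ref{lem:L12&L21}. The key mechanism---extracting a positive power of $\eps$ from the interpolation $\|\LL_{1,2}w\|_{H^{-1}_k} \lesssim \|w\|_{\dot H^s}^{1-\theta}\|w\|_{\dot H^2}^\theta$ combined with the $\frac{1}{\sqrt\eps}$-gain on $\|w\|_{L^2_t\dot H^2}$---is exactly right.

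There is, however, one point where your bookkeeping slips. You write $N \le C_1\|(g_0,w_0)\|_\XX + C_2(\eps)N$ and claim that \emph{both} off-diagonal contributions carry a small factor, in particular that ``the coupling through $\RR_2 = \LL_{2,1}g$ can be made small by the same mechanism applied to $g$ via Proposition~\ref{prop:SLambda_eps_L2}''. This is not correct: Proposition~\ref{prop:SLambda_eps_L2} carries no $\eps$-gain whatsoever, so $\|\mathbf{e}_\lambda\LL_{2,1}g\|_{L^2_t(\dot H^s\cap\dot H^1)} \lesssim \|\mathbf{e}_\lambda g\|_{L^2_t H^1_k}$ contributes an $O(1)$ constant times $N$, and a single closed inequality $N\le C_1 + C_2(\eps)N$ with $C_2(\eps)\to 0$ does not hold. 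The paper resolves this by treating the two components \emph{asymmetrically}: one first records
\[
\|\mathbf{e}_\lambda w\|_{L^\infty_t(\dot H^s\cap\dot H^1)} + \|\mathbf{e}_\lambda w\|_{L^2_t\dot H^s} + \tfrac{1}{\sqrt\eps}\|\mathbf{e}_\lambda w\|_{L^2_t\dot H^2} \le C_3\|w_0\|_{\dot H^s\cap\dot H^1} + C_4\|\mathbf{e}_\lambda g\|_{L^2_t H^1_k},
\]
and then \emph{substitutes this into the $g$-estimate}. After Young's inequality on the interpolated term (or, equivalently, reading off the factor $\eps^{\theta/2}$ directly as you suggest), one obtains a closed inequality for the $g$-norms alone with a coefficient $\beta C_4 + \sqrt\eps\,C_\beta C_4$ (or simply $O(\eps^{\theta/2})$) in front of $\|\mathbf{e}_\lambda g\|_{L^2_t H^1_k}$, which can now be absorbed by choosing $\beta$ small and then $\eps$ small. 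Only after closing on $g$ does one return to bound $w$. In short, the smallness lives in the full loop $g\to w\to g$, not in each arrow separately. Once you make this substitution explicit your argument goes through, and part~(2) is indeed handled the same way with $S_D*\RR$ replacing $S_D(g_0,w_0)$.
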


\begin{proof}[Proof of Proposition~\ref{prop:S_LL_eps}]  
We split the proof into two steps.

\medskip\noindent
\textit{Proof of (1).}
Let us denote
$$
(g(t) , w(t)) = S_{\LL}(t) (g_0,w_0), 
$$
so that
$$
g(t) = S_{\LL_{1,1}}(t) g_0 + (S_{\LL_{1,1}} * \LL_{1,2} w )(t)
$$
and
$$
w(t) = S_{\LL_{2,2}}(t) w_0 + ( S_{\LL_{2,2}} * \LL_{2,1} g)(t) .
$$
We observe that $\lla \LL_{1,2} w \rra =0$ so that $\Pi (\LL_{1,2} w ) =0$, and we can then  hereafter apply the results of Proposition~\ref{prop:SLambda_eps_L2} to $S_{\LL_{1,1}} * \LL_{1,2} w$.
From Proposition~\ref{prop:SLambda_eps_L2}--(1), we have
$$
\begin{aligned}
\| \mathbf{e}_{\lambda}  S_{\LL_{1,1}}(\cdot) g_0 \|_{L^\infty_t L^2_k } + \| \mathbf{e}_{\lambda}  S_{\LL_{1,1}}(\cdot) g_0 \|_{L^2_t H^1_k}
&\lesssim \| g_0 \|_{L^2_k}.
\end{aligned}
$$
On the other hand, from Proposition~\ref{prop:SLambda_eps_L2}--(2) , we have
$$
\begin{aligned}
\| \mathbf{e}_{\lambda} ( S_{\LL_{1,1}} * \LL_{1,2} w) \|_{L^\infty_t L^2_k} + \| \mathbf{e}_{\lambda} ( S_{\LL_{1,1}} * \LL_{1,2} w) \|_{L^2_t H^1_k}
&\lesssim \| \mathbf{e}_{\lambda}  \LL_{1,2} w \|_{L^2_t H^{-1}_k} \\
&\lesssim \| \mathbf{e}_{\lambda} w \|_{L^2_t \dot H^s}^{1-\theta} \| \mathbf{e}_{\lambda} w \|_{L^2_t \dot H^2}^{\theta} 
\end{aligned}
$$
with ${\theta = (1-s)/(2-s)}$, where we have used the first estimate in Lemma~\ref{lem:L12&L21}.
We have thus established
\begin{equation}\label{eq:g}
\| \mathbf{e}_{\lambda} g \|_{L^\infty_t L^2_k} + \| \mathbf{e}_{\lambda} g \|_{L^2_t H^1_k} 
\le C_1 \| g_0 \|_{L^2_k} + C_2 \| \mathbf{e}_{\lambda} w \|_{L^2_t \dot H^s}^{1-\theta} \| \mathbf{e}_{\lambda} w \|_{L^2_t \dot H^2}^{\theta},
\end{equation}
for some constants $C_1, C_2>0$.

\medskip
We now come to the estimate of $w$.  We recall that from Lemma~\ref{lem:S_LL22_L2}--(1), we have
$$
\| \mathbf{e}_{\lambda} S_{\LL_{2,2}} (\cdot) w_0 \|_{L^\infty_t (\dot H^s \cap \dot H^1) } + \| \mathbf{e}_{\lambda} S_{\LL_{2,2}} (\cdot) w_0 \|_{L^2_t \dot H^s} + \frac{1}{\sqrt{\eps}} \| \mathbf{e}_{\lambda} S_{\LL_{2,2}} (\cdot) w_0 \|_{L^2_t \dot H^2} \lesssim \| w_0 \|_{\dot H^s \cap \dot H^1}.
$$
Moreover from Lemma~\ref{lem:S_LL22_L2}--(2) we have
$$
\begin{aligned}
\| \mathbf{e}_{\lambda} ( S_{\LL_{2,2}} * \LL_{2,1} g) \|_{L^\infty_t (\dot H^s \cap \dot H^1)}
+ \| \mathbf{e}_{\lambda} ( S_{\LL_{2,2}} * \LL_{2,1} g) \|_{L^2_t \dot H^s}
&+ \frac{1}{\sqrt{\eps}}\| \mathbf{e}_{\lambda} ( S_{\LL_{2,2}} * \LL_{2,1} g) \|_{L^2_t \dot H^2} \\
&\lesssim \| \mathbf{e}_{\lambda}  \LL_{2,1} g \|_{L^2_t (\dot H^s \cap \dot H^1)} \\
&\lesssim \| \mathbf{e}_{\lambda}  g \|_{L^2_t H^1_k} ,
\end{aligned}
$$
 where we have used the second estimate in Lemma~\ref{lem:L12&L21} for obtaining the last inequality. 
 The two last estimates together, we have thus established 
\begin{equation}\label{eq:w}
\| \mathbf{e}_{\lambda} w \|_{L^\infty_t (\dot H^s \cap \dot H^1)} 
+ \| \mathbf{e}_{\lambda} w \|_{L^2_t \dot H^s}
+ \frac{1}{\sqrt \eps}\| \mathbf{e}_{\lambda} w \|_{L^2_t \dot H^2} 
\le C_3 \| w_0 \|_{\dot H^s \cap \dot H^1} + C_4 \| \mathbf{e}_{\lambda} g \|_{L^2_t H^1_k},
\end{equation}
for some constants $C_3,C_4>0$.

\medskip
Coming back to \eqref{eq:g} and using Young's inequality, we deduce   that for any $\beta>0$, there is some $C_\beta >0$ such that
$$
\| \mathbf{e}_{\lambda} g \|_{L^\infty_t L^2_k} + \| \mathbf{e}_{\lambda} g \|_{L^2_t H^1_k} 
\le C_1 \| g_0 \|_{L^2_k} + \beta \| \mathbf{e}_{\lambda} w \|_{L^2_t \dot H^s} + C_\beta \| \mathbf{e}_{\lambda} w \|_{L^2_t \dot H^2}.
$$
Combining that last estimate with \eqref{eq:w} yields
$$
\begin{aligned}
\| \mathbf{e}_{\lambda} g \|_{L^\infty_t L^2_k} + \| \mathbf{e}_{\lambda} g \|_{L^2_t H^1_k} 
&\le C_1 \| g_0 \|_{L^2_k} 
+ \beta C_3 \| w_0 \|_{\dot H^s \cap \dot H^1} + \beta C_4 \| \mathbf{e}_{\lambda} g \|_{L^2_t H^1_k}  \\
&\quad
+ \sqrt{\eps} C_\beta C_3 \| w_0 \|_{\dot H^s \cap \dot H^1} +  \sqrt{\eps} C_\beta C_4 \| \mathbf{e}_{\lambda} g \|_{L^2_t H^1_k}.
\end{aligned}
$$
Choosing first $\beta>0$ small enough and then $\eps>0$ small enough, we obtain 
$$
\begin{aligned}
\| \mathbf{e}_{\lambda} g \|_{L^\infty_t L^2_k} + \| \mathbf{e}_{\lambda} g \|_{L^2_t H^1_k} 
&\le C_5 \| g_0 \|_{L^2_k} 
+  C_6 \| w_0 \|_{\dot H^s \cap \dot H^1} 
\end{aligned}
$$
for some constants $C_5,C_6>0$. We then conclude part (1) by gathering this last estimate with \eqref{eq:w}.

\medskip\noindent
\textit{Step 2.} 
Let us denote
$
(G , W)(t) = (S_{\LL}* \RR)(t) ,
$
so that
$$
G(t) = (S_{\LL_{1,1}} * \LL_{1,2} W )(t) +  (S_{\LL_{1,1}} * \RR_1 )(t)
$$
and
$$
W(t) = ( S_{\LL_{2,2}} * \LL_{2,1} G)(t) + ( S_{\LL_{2,2}} * \RR_2)(t) .
$$
Observing that $\Pi \RR_1 = \Pi (\LL_{1,2} W) = 0$, we may use Proposition~\ref{prop:SLambda_eps_L2}--(2) as well as additionally  
 Lemma~\ref{lem:L12&L21} for handling $S_{\LL_{1,1}} * \LL_{1,2} W$, and we obtain 
\begin{equation*}
\| \mathbf{e}_{\lambda} G \|_{L^\infty_t L^2_k} + \| \mathbf{e}_{\lambda} G \|_{L^2_t H^1_k} 
\le C_2 \| \mathbf{e}_{\lambda} W \|_{L^2_t \dot H^s}^{1-\theta} \| \mathbf{e}_{\lambda} W \|_{L^2_t \dot H^2}^{\theta} + C'_1 \| \mathbf{e}_{\lambda} \RR_1 \|_{L^2_t H^{-1}_k} ,
\end{equation*}
for some constants $C'_1, C_2>0$ and with ${ \theta = (1-s)/(2-s)}$. 

Similarly, using Lemma~\ref{lem:S_LL22_L2}--(2) and  additionally Lemma~\ref{lem:L12&L21} for handling  $S_{\LL_{2,2}} * \LL_{2,1} G$, we have 
\begin{equation*}
\| \mathbf{e}_{\lambda} W \|_{L^\infty_t (\dot H^s \cap \dot H^1)} 
+ \| \mathbf{e}_{\lambda} W \|_{L^2_t \dot H^s}
+ \frac{1}{\sqrt \eps}\| \mathbf{e}_{\lambda} W \|_{L^2_t \dot H^2} 
\le C_4 \| \mathbf{e}_{\lambda} G \|_{L^2_t H^1_k} + C'_3 \| \mathbf{e}_{\lambda} \RR_2 \|_{L^2_t (\dot H^s \cap \dot H^1)} ,
\end{equation*}
for some constants $C'_3,C_4>0$.
We can then conclude to (2) by arguing as in Step~1.
\end{proof}

\section{Proof of the nonlinear stability theorem}
\label{sec:proofNLstab}

This section is devoted to the proof of Theorem~\ref{theo:nonlinear_stabilityIntro}. 
We fix $s\in(0,1)$, $k >3$ and $\lambda \in [0 , \mu(1-s))$. We next choose $\eps_0 \in (0,\eps_*)$, where $\eps_* > 0$ is the small scale time provided by Proposition~\ref{prop:S_LL_eps}, and recall that the spaces $\XX$ and $\YY$ are defined in \eqref{eq:def:XX&YY}.

Consider the space
$$
\mathcal{Z} = \left\{ (g,w) \in  L^\infty_t (L^2_{k,0} \times (\dot H^s \cap \dot H^1)) \cap L^2_t (H^1_k \times (\dot H^s \cap \dot H^2)) \,\Big|\,  \| (g,w) \|_\mathcal{Z}<\infty \right\}
$$
with
$$
\| (g,w) \|_\mathcal{Z} := \| \mathbf{e}_{\lambda} (g,w) \|_{L^\infty_t (\XX)} +  \| \mathbf{e}_{\lambda} (g,w) \|_{L^2_t (\YY)}.
$$
For a fixed initial datum $(g_0,w_0) \in \XX$ with $\la \!\la g_0 \ra \!\ra=0$, define next  the mapping $\Phi : \mathcal{Z} \to \mathcal{Z}$, $(g,w) \mapsto \Phi[g,w]$ given by, for all $t \ge 0$,
$$
\Phi[g,w] (t) = S_\LL(t) (g_0,w_0) + (S_\LL * \RR[(g,w) , (g,w)] ) (t) ,
$$
where 
\bean
\RR [(g,w), (g,w)]  &=& \left( R_1[(g,w), (g,w)] + S_1 [(g,w), (g,w)] , \right.
\\
&&\qquad\qquad \left. R_2[(g,w), (g,w)] + {S}_2[(g,w), (g,w)]  \right),
\eean
with 
$$
\begin{aligned}
&R_1 [(g,w), (\bar g, \bar w)] = -  \Div (g \nabla \bar w), \qquad
S_1 [(g,w), (\bar g, \bar w)] = - \Div (g \nabla \kappa * \bar g), \\
&R_2 [(g,w), (\bar g, \bar w)] = \nabla \kappa *[g \nabla \bar w], \qquad
S_2 [(g,w), (\bar g, \bar w)] = \nabla \kappa * [g \nabla \kappa *\bar g] .
\end{aligned}
$$
We observe here that the first component of $\Phi [g,w] (t)$ belongs to $L^2_{k,0}$ since 
$$
\Pi R_1 [(g,w), (g,w)] = \Pi S_1 [(g,w), (g,w)] = 0,
$$ 
thus in the sequel we can apply the results of Proposition~\ref{prop:S_LL_eps}.

Thanks to Proposition~\ref{prop:S_LL_eps}, we have 
$$
 \|S_\LL( \cdot) (g_0,w_0) \|_\mathcal{Z} \lesssim \| (g_0,w_0) \|_{\XX},
$$
as well as 
$$
\begin{aligned}
\| S_\LL * \RR \|_\mathcal{Z} 
&\lesssim \| \mathbf{e}_\lambda R_1[(g,w), (g,w)] \|_{L^2_t H^{-1}_k} 
+ \| \mathbf{e}_\lambda {S}_1[(g,w), (g,w)] \|_{L^2_t H^{-1}_k} \\
&\quad
+\| \mathbf{e}_\lambda R_2 [(g,w), (g,w)]\|_{L^2_t (\dot H^s \cap \dot H^1)} 
+\| \mathbf{e}_\lambda {S}_2[(g,w), (g,w)] \|_{L^2_t (\dot H^s \cap \dot H^1)}  ,
\end{aligned}
$$
and we now estimate each term separately.

\smallskip
For the term associated to $R_1$, we first have 
$$
\begin{aligned}
\| \Div (g \nabla w) \|_{H^{-1}_k} 
&\lesssim \| g \nabla w \|_{L^2_k} \\
&\lesssim \| g \|_{L^4_k} \| \nabla w \|_{L^4} \\
&\lesssim \| g \|_{L^2_k}^{1/2} \| g \|_{H^1_k}^{1/2} \| \nabla w \|_{L^2}^{1/2} \| \nabla^2 w \|_{L^2}^{1/2} 
\\
&\lesssim \| (g,w) \|_\XX \| (g,w) \|_\YY
,
\end{aligned}
$$
where we have used H\"older's inequality in the second line, and twice the Ladyzhenskaya's inequality~\eqref{eq:Lad_ineq} in the last one. We hence obtain
\begin{equation}\label{eq:R1}
\begin{aligned}
\| \mathbf{e}_\lambda R_1[(g,w), (g,w)] \|_{L^2_t H^{-1}_k}
&\lesssim \| \mathbf{e}_\lambda g \|_{L^\infty_t L^2_k}^{1/2} \| \nabla w \|_{L^\infty_t L^2}^{1/2}  \| \mathbf{e}_\lambda g \|_{L^2_t H^1_k}^{1/2}  \|  \nabla^2 w \|_{L^2_t L^2}^{1/2} \\
&\lesssim \| (g,w) \|_{\mathcal{Z}}^2.
\end{aligned}
\end{equation}
For the term associated to $S_1$, arguing similarly as above, we get
$$
\begin{aligned}
\| \Div (g \nabla \kappa*g) \|_{H^{-1}_k} 
&\lesssim \| g \nabla \kappa *g \|_{L^2_k} \\
&\lesssim \| g \|_{L^4_k} \| \nabla \kappa *g  \|_{L^4} \\
&\lesssim \| g \|_{L^2_k}^{1/2} \| g \|_{H^1_k}^{1/2} \| \nabla \kappa *g  \|_{L^2}^{1/2} \| \nabla \kappa *g  \|_{\dot H^1}^{1/2} \\
&\lesssim \| g \|_{L^2_k} \| g \|_{H^1_k}, 
\end{aligned}
$$
where we have used  H\"older's inequality in the second line,  twice the Ladyzhenskaya's inequality~\eqref{eq:Lad_ineq} 
in the third line and finally 
Lemma~\ref{lem:borneKgHs} 
in  the last line. 
We hence obtain
\begin{equation}\label{eq:barR1}
\begin{aligned}
\| \mathbf{e}_\lambda  S_1[(g,w), (g,w)] \|_{L^2_t H^{-1}_k}
&\lesssim \| \mathbf{e}_\lambda g \|_{L^\infty_t L^2_k}   \| g \|_{L^2_t H^1_k}\\
&\lesssim \| (g,w) \|_{\mathcal{Z}}^2.
\end{aligned}
\end{equation}
For the term associated to $ R_2$, thanks to Lemma~\ref{lem:borneKgHs} we have 
$$
\begin{aligned}
\| \nabla \kappa *(g \nabla w) \|_{\dot H^s \cap \dot H^1} 
&\lesssim \| g \nabla w \|_{L^1 \cap L^2} \\
&\lesssim \| g \nabla w \|_{L^2_k},
\end{aligned}
$$
where we have used the embedding $L^2_k(\R^2) \subset L^1(\R^2)$ in last inequality.
We can thus argue as above for obtaining \eqref{eq:R1}, and we  deduce
\beqn\label{eq:R2}
\| \mathbf{e}_\lambda  R_2 [(g,w), (g,w)]\|_{L^2_t (\dot H^s \cap \dot H^1)}
\lesssim \| (g,w) \|_{\mathcal{Z}}^2.
\eeqn
Finally, for the term associated to $S_2$,  thanks to Lemma~\ref{lem:borneKgHs} we have similarly 
$$
\begin{aligned}
\| \nabla \kappa *(g \nabla \kappa *g) \|_{\dot H^s \cap \dot H^1} 
&\lesssim \| g \nabla \kappa *g \|_{L^1 \cap L^2} \\
&\lesssim \| g \nabla \kappa *g \|_{L^2_k} ,
\end{aligned}
$$
and therefore, arguing as for obtaining \eqref{eq:barR1} yields
\beqn\label{eq:S2}
\| \mathbf{e}_\lambda S_2 [(g,w), (g,w)]\|_{L^2_t (\dot H^s \cap \dot H^1)}
 \lesssim \| (g,w) \|_{\mathcal{Z}}^2.
\eeqn
Putting together \eqref{eq:R1}--\eqref{eq:S2}, we have hence obtained a first estimate
\begin{equation}\label{eq:pointfixe1}
\| \Phi[g,w] \|_{\mathcal{Z}} \le C_0 \| (g_0,w_0) \|_{\XX} + C_1 \| (g,w) \|_{\mathcal{Z}}^2,
\end{equation}
for some constants $C_0,C_1>0$.

\medskip
Now, for $(g,w), (\bar g , \bar w) \in \mathcal{Z}$, we remark that
$$
\begin{aligned}
\Phi[g,w] - \Phi[\bar g , \bar w]
&= S_\LL * \left( R_1^* + S_1^* , R_2^* + S_2^* \right)
\end{aligned}
$$
with
$$
\begin{aligned}
R_1^* &= R_1[ (g,w) , (g,w)]-R_1[ (\bar g,\bar w) , (\bar g,\bar w)]\\ 
&= R_1[ (g,w) , (g,w)-(\bar g,\bar w)]+R_1[ (g,w)-(\bar g,\bar w) , (\bar g,\bar w)]  \\
S_1^* &= S_1[ (g,w) , (g,w)]-S_1[ (\bar g,\bar w) , (\bar g,\bar w)]  \\ 
&= S_1[ (g,w) , (g,w)-(\bar g,\bar w)]+S_1[ (g,w)-(\bar g,\bar w) , (\bar g,\bar w)] \\
R_2^* &= R_2[ (g,w) , (g,w)]-R_2[ (\bar g,\bar w) , (\bar g,\bar w)]  \\
&= R_2[ (g,w) , (g,w)-(\bar g,\bar w)]+R_2[ (g,w)-(\bar g,\bar w) , (\bar g,\bar w)] \\
S_2^* &= S_2[ (g,w) , (g,w)]-S_2[ (\bar g,\bar w) , (\bar g,\bar w)] \\ 
&= S_2[ (g,w) , (g,w)-(\bar g,\bar w)]+S_2[ (g,w)-(\bar g,\bar w) , (\bar g,\bar w)] .
\end{aligned}
$$
Arguing exactly as above, we may establish a second estimate
\begin{equation}\label{eq:pointfixe2}
\| \Phi(g,w) - \Phi(\bar g , \bar w) \|_{\mathcal{Z}} \le C_2 \left( \| (g,w) \|_\mathcal{Z} + \| (\bar g , \bar w) \|_\mathcal{Z}\right) \| (g,w) - (\bar g , \bar w) \|_\mathcal{Z},
\end{equation}
for some constant $C_2>0$.

As a consequence of the estimates~\eqref{eq:pointfixe1} and~\eqref{eq:pointfixe2}, we can find $\eta_0,\eta_1> 0$ small enough such that $C_0 \eta_0 + C_1\eta_1^2 \le \eta_1$ and $2C_2\eta_1 < 1$ in such a way that $\Phi$ is a contraction on $B_\ZZ(0,\eta_1)$  for any $(g_0,w_0) \in B_\XX(0,\eta_0)$. 
By a  standard Banach fixed-point argument, one can construct a unique global mild solution $(g,w) \in \ZZ$ to \eqref{eq:KS_perturbation_gw_bis}  for any $(g_0, w_0) \in \XX$ such that $\| (g_0,w_0) \|_{\XX} \le \eta_0$.
More specifically, choosing $\eta_1 := 2C_0 \eta_0$ and $4 C_0 \max(C_1,C_2) \eta_0 < 1$, the above solution  in particular verifies the energy estimate 
\begin{equation}\label{eq:theo:energy}
\| (g,w) \|_{L^\infty_t (\XX)} + \| (g,w) \|_{L^2_t (\YY)} \le 2C_0 \| (g_0, w_0) \|_{\XX}, 
\end{equation}
which is nothing but \eqref{eq:theo:energyIntro}, as well as the decay estimate
\begin{equation}\label{eq:theo:decay}
\| \mathbf{e}_{\lambda} (g,w) \|_{L^\infty_t (\XX)} + \| \mathbf{e}_{\lambda}(g,w) \|_{L^2_t (\YY)} \le 2C_0 \| (g_0, w_0) \|_\XX,
\end{equation}
 which is nothing but \eqref{eq:theo:decayIntro}.


\bigskip


\end{document}